          \newtheorem{theorem}{Theorem}[section]
      \newtheorem{proposition}[theorem]{Proposition}
      \newtheorem{corollary}[theorem]{Corollary}
      \newtheorem{lemma}[theorem]{Lemma}
      \newcommand{\BB}{{\mathbb B}}
      \newcommand{\CC}{{\mathbb C}}
      \newcommand{\NN}{{\mathbb N}}
      \newcommand{\DD}{{\mathbb D}}
      \newcommand{\RR}{{\mathbb R}}
      \newcommand{\FF}{{\mathbb F}}
      \newcommand{\TT}{{\mathbb T}}
      \newcommand{\cA}{{\mathcal A}}
      \newcommand{\cD}{{\mathcal D}}
      \newcommand{\cE}{{\mathcal E}}
      \newcommand{\cG}{{\mathcal G}}
      \newcommand{\cH}{{\mathcal H}}
      \newcommand{\cK}{{\mathcal K}}
      \newcommand{\cL}{{\mathcal L}}
      \newcommand{\cM}{{\mathcal M}}
      \newcommand{\cP}{{\mathcal P}}
      \newcommand{\cR}{{\mathcal R}}
      \newdimen\expt
      \def\boxit#1{\setbox0\hbox{$\displaystyle{#1}$}
            \hbox{\lower.4\expt
       \hbox{\lower3\expt\hbox{\lower\dp0
            \hbox{\vbox{\hrule height.4\expt
       \hbox{\vrule width.4\expt\hskip3\expt
            \vbox{\vskip3\expt\box0\vskip2\expt}%
       \hskip3\expt\vrule width.4\expt}\hrule height.4\expt}}}}}}
\begin{document}
       \pagestyle{myheadings}
      \markboth{ Gelu Popescu}{  Noncommutative  hyperbolic geometry
      on the unit
      ball of $B(\cH)^n$    }


      \title [   Noncommutative  hyperbolic geometry  on the unit
      ball of $B(\cH)^n$  ]
      {            Noncommutative  hyperbolic geometry   on the unit
      ball of $B(\cH)^n$
      }
        \author{Gelu Popescu}
\date{June 3, 2008}
      \thanks{Research supported in part by an NSF grant}
      \subjclass[2000]{Primary:  46L52;  32F45; Secondary: 47L25; 32Q45 }
      \keywords{Noncommutative  hyperbolic geometry; Noncommutative function theory;
       Poincar\' e--Bergman metric,
      Harnack part;  Hyperbolic distance; Free holomorphic function; Free pluriharmonic function;
       Schwarz-Pick lemma}

      \address{Department of Mathematics, The University of Texas
      at San Antonio \\ San Antonio, TX 78249, USA}
      \email{\tt gelu.popescu@utsa.edu}

\begin{abstract}
In this paper we introduce a  hyperbolic (Poincar\' e-Bergman type)
distance $\delta$ on the noncommutative open ball
$$
[B(\cH)^n]_1:=\left\{(X_1,\ldots, X_n)\in B(\cH)^n:\ \|X_1
X_1^*+\cdots +X_nX_n^* \|^{1/2} <1\right\},
$$
where $B(\cH)$ is the algebra of all bounded linear operators on a
Hilbert space $\cH$.  It is proved that $\delta$ is  invariant under
the action of  the free holomorphic automorphism group of
$[B(\cH)^n]_1$, i.e.,
$$
\delta(\Psi(X), \Psi (Y))=\delta(X,Y),\quad X,Y\in
[B(\cH)^n]_1,
$$
for all $\Psi \in Aut([B(\cH)^n]_1)$. Moreover, we show that the
$\delta$-topology and the usual   operator norm topology coincide on
$[B(\cH)^n]_1$. While the open ball $[B(\cH)^n]_1$ is not a complete
metric space with respect to the operator norm topology,  we prove
that $[B(\cH)^n]_1$ is a complete metric space with respect to the
hyperbolic metric $\delta$. We obtain  an explicit formula for
$\delta$ in terms of the reconstruction operator
$$
R_X:=X_1^*\otimes R_1+\cdots +X_n^*\otimes R_n, \quad
X:=(X_1,\ldots, X_n)\in [B(\cH)^n]_1,
$$
associated with the right creation operators $R_1,\ldots, R_n$ on
the full Fock space with $n$ generators. In the particular case when
$\cH=\CC$, we show that the hyperbolic distance $\delta$ coincides
with the Poincar\' e-Bergman distance on the open unit ball
$$\BB_n:=\{z=(z_1,\ldots, z_n)\in\CC^n:\ \|z\|_2<1\}.
$$

We  obtain  a Schwarz-Pick lemma for free holomorphic functions on
$[B(\cH)^n]_1$   with respect to the hyperbolic metric, i.e., if
 $F:=(F_1,\ldots, F_m)$ is
a contractive ($\|F\|_\infty\leq 1$)  free holomorphic function,
then
$$
\delta(F(X), F(Y))\leq \delta(X,Y), \quad X,Y\in [B(\cH)^n]_1.
$$

The results of this paper are presented in the more general context
of Harnack parts of the closed ball $[B(\cH)^n]_1^-$, which are
noncommutative analogues of the Gleason parts of the Gelfand
spectrum of a function algebra.
\end{abstract}

      \maketitle

\bigskip

\bigskip

\section*{Introduction}

Poincar\' e's  discovery of a conformally invariant metric on the
open unit disc $\DD:=\{z\in \CC: \ |z|<1\}$  of the complex plane
was a cornerstone in the development of complex function theory. The
hyperbolic (Poincar\' e)  distance is defined on  $\DD$ by
$$
\delta_P(z,w):=\tanh^{-1}\left| \frac{z-w}{1-\bar z w}\right|,\quad
z,w\in \DD.
$$
Some of the basic and most important properties of the Poincar\' e
distance are the following:

\begin{enumerate}
\item
the Poincar\' e distance is invariant under the conformal
automorphisms of $\DD$, i.e.,
$$
\delta_P(\varphi(z), \varphi(w))=\delta_P(z,w),\quad z,w\in \DD,
$$
for all $\varphi\in Aut(\DD)$;
\item the $\delta_P$-topology induced on the open disc is the usual
planar topology;
\item
$(\DD, \delta_P)$ is a complete metric space;
\item  any analytic function $f:\DD\to \DD$ is distance-decreasing,
i.e., satisfies
$$
\delta_P(f(z), f(w))\leq \delta_P(z,w),\quad z,w\in \DD.
$$
\end{enumerate}

Bergman (see \cite{Be}) introduced an analogue of the Poincar\' e
distance for the open unit ball of $\CC^n$,
$$\BB_n:=\{z=(z_1,\ldots, z_n)\in \CC^n:\ \|z\|_2<1\},$$
  which  is defined by
 $$
\beta_n(z,w)=\frac{1}{2}\ln
\frac{1+\|\psi_z(w)\|_2}{1-\|\psi_z(w)\|_2},\qquad z,w\in \BB_n,
$$
where $\psi_z$ is the involutive automorphism of $\BB_n$ that
interchanges $0$ and $z$. The  Poincar\' e-Bergman distance has
 properties similar to those of $\delta_P$ (see (1)--(4)).  There is
a large literature concerning invariant metrics, hyperbolic
manifolds, and  the geometric viewpoint of complex function theory
(see \cite{Ko1}, \cite{Ko2}, \cite{Zhu}, and \cite{Kr} and the
references there in).

There are  several extensions  of the   Poincar\' e-Bergman distance
and related topics to more general domains.
 We mention the work of Suciu
(\cite{Su}, \cite{Su2}, \cite{Su3}), Foia\c s (\cite{Fo}), and And\^
o-Suciu-Timotin (\cite{AST}) on Harnack parts   of contractions and
Harnack  type distances between two contractions on Hilbert spaces.
Some of their results will be recover (with a different proof) in
the present paper, in the particular case when $n=1$.

In this paper,  we continue  our program to develop  a {\it
noncommutative function theory} on the unit ball of $B(\cH)^n$ (see
\cite{Po-holomorphic}, \cite{Po-free-hol-interp},
\cite{Po-pluriharmonic}, \cite{Po-majorants}, and
\cite{Po-automorphism}). The main goal is to introduce a hyperbolic
metric  $\delta$ on the noncommutative ball
$$
[B(\cH)^n]_1:=\left\{(X_1,\ldots, X_n)\in B(\cH)^n:\ \|X_1
X_1^*+\cdots +X_nX_n^* \|^{1/2} <1\right\},
$$
where $B(\cH)$ denotes  the algebra
  of all bounded linear operators on a Hilbert space $\cH$, which
  satisfy  properties similar to those of the Poincar\' e metric $\delta_P$ (see
  (1)--(3)), and which is a noncommutative extension  of the Poincar\'
  e-Bergman metric $\beta_n$ on the open unit ball of $\CC^n$.
The secondary goal is to obtain a Schwarz-Pick lemma for free
holomorphic functions on $[B(\cH)^n]_1$   with respect to the
hyperbolic metric.

We should mention that the noncommutative ball $[B(\cH)^n]_1$ can be
identified with the open unit ball of $B(\cH^n,\cH)$, which is one
of the infinite-dimensional Cartan domains studied  by L. Harris
(\cite{Ha1}, \cite{Ha2}, \cite{Ha3}). He has obtained several
results, related to our topic, in the more general setting of
$JB^*$-algebras (see also the book by H.~Upmeier \cite{Up}). We also
remark that the group of all free holomorphic automorphisms of
$[B(\cH)^n]_1$ (\cite{Po-automorphism}), can be identified with  a
subgroup of the group of automorphisms of $[B(\cH^n,\cH)]_1$
considered by R.S.~Phillips  \cite{Ph} (see also \cite{Y}). However,
the hyperbolic metric $\delta$  that we introduce in this paper is
different from the  Kobayahi metric  on  $[B(\cH)^n]_1$  and also
different  from  the metric  considered, for example,  in
\cite{Ha1}.

 In
\cite{Po-holomorphic}, \cite{Po-free-hol-interp},
\cite{Po-pluriharmonic}, \cite{Po-majorants}, and
\cite{Po-automorphism}, we obtained several results concerning   the
theory of free holomorphic (resp. pluriharmonic) functions  on
$[B(\cH)^n]_1$ and provided a framework for the study of arbitrary
 $n$-tuples of operators on a Hilbert space $\cH$. Several classical
 results from complex analysis (\cite{Co}, \cite{Ga}, \cite{H}, \cite{Ru})  have
 free analogues in
 the  noncommutative multivariable setting.
To put our work in perspective, we need   to set up  some notation
and recall some definitions.

 Let $\FF_n^+$ be the unital free semigroup on $n$ generators
$g_1,\ldots, g_n$ and the identity $g_0$.  The length of $\alpha\in
\FF_n^+$ is defined by $|\alpha|:=0$ if $\alpha=g_0$ and
$|\alpha|:=k$ if
 $\alpha=g_{i_1}\cdots g_{i_k}$, where $i_1,\ldots, i_k\in \{1,\ldots, n\}$.
If $(X_1,\ldots, X_n)\in B(\cH)^n$, we set $X_\alpha:= X_{i_1}\cdots
X_{i_k}$ and $X_{g_0}:=I_\cH$. Throughout this paper, we assume that
 $\cE$ is a  separable Hilbert space.
A map $F:[B(\cH)^n]_{1}\to B( \cH)\otimes_{min}B(\cE)$ is called
  {\it free
holomorphic function} on  $[B(\cH)^n]_{1}$  with coefficients in
$B(\cE)$ if there exist $A_{(\alpha)}\in B(\cE)$, $\alpha\in
\FF_n^+$, such that $\limsup_{k\to \infty} \left\| \sum_{|\alpha|=k}
A_{(\alpha)}^* A_{\alpha)}\right\|^{1/2k}\leq 1$ and
$$
F(X_1,\ldots, X_n)=\sum\limits_{k=0}^\infty \sum\limits_{|\alpha|=k}
X_\alpha\otimes  A_{(\alpha)},
$$
where the series converges in the operator  norm topology  for any
$(X_1,\ldots, X_n)\in [B(\cH)^n]_{1}$.
The set of all free holomorphic functions on $[B(\cH)^n]_1$ with
coefficients in $B(\cE)$ is denoted by $Hol(B(\cH)^n_1)$.
Let $H^\infty(B(\cH)^n_1)$  denote the set of  all elements $F$ in
$Hol(B(\cH)^n_1)$  such that
$$
\|F\|_\infty:=\sup  \|F(X_1,\ldots, X_n)\|<\infty,
$$
where the supremum is taken over all $n$-tuples  of operators
$(X_1,\ldots, X_n)\in [B(\cH)^n]_1$ and any Hilbert space $\cH$.
According to \cite{Po-holomorphic} and \cite{Po-pluriharmonic},
$H^\infty(B(\cH)^n_1)$ can be identified to the operator algebra
$ F_n^\infty\bar \otimes B(\cE)$ (the weakly closed algebra generated
by the spatial tensor product), where $F_n^\infty$ is the
noncommutative analytic Toeplitz algebra (see \cite{Po-von},
\cite{Po-multi}, \cite{Po-analytic}).

We say that a map
$u:[B(\cH)^n]_1\to B(\cH)\otimes_{min} B(\cE)$ is a self-adjoint
free pluriharmonic function on $[B(\cK)^n]_1$ if $u=\text{\rm Re}\,
f:=\frac{1}{2}(f^*+f)$ for some free holomorphic function $f$.  We
also recall that $u$ is called positive if $u(X_1,\ldots,X_n)\geq 0$
for any $(X_1,\ldots, X_n)\in [B(\cK)^n]_1$, where $\cK$ is an
infinite dimensional Hilbert space.

In Section 1, we introduce an equivalence relation $\overset{H}\sim$
on the closed ball $[B(\cH)^n]_1^-$,  and study  the equivalence
classes  (called Harnack parts ) with respect to $\overset{H}\sim$.
Two $n$-tuples  of operators $A:=(A_1,\ldots, A_n)$ and
$B:=(B_1,\ldots, B_n)$ in $[B(\cH)^n]_1^-$ are called Harnack
equivalent (and  denote $A\overset{H}{\sim}\, B$)  if and only if
there exists  a constant $c\geq 1$ such that
\begin{equation*}
 \frac{1}{c^2}\text{\rm Re}\,p(B_1,\ldots, B_n)\leq
\text{\rm Re}\,p(A_1,\ldots, A_n)\leq c^2 \text{\rm
Re}\,p(B_1,\ldots, B_n)
\end{equation*}
for any noncommutative polynomial $p\in \CC[X_1,\ldots, X_n]\otimes
M_{m}$, $m\in \NN$, with matrix-valued coefficients such that
$\text{\rm Re}\,p \geq 0$. Here $M_m$ denotes the algebra of all
$m\times m$  matrices with entries in $\CC$. We also use the
notation $A\overset{H}{{\underset{c}\sim}}\, B$ to emphasize the
constant $c$ in the inequalities above. The  Harnack parts of
$[B(\cH)^n]_1^-$ are noncommutative analogues of the Gleason parts
of the Gelfand spectrum of a function algebra (see \cite{G}).

 In Section 1, we use several results  (see \cite{Po-poisson},
 \cite{Po-curvature}, \cite{Po-varieties},  \cite{Po-unitary}) concerning  the theory of
 noncommutative Poisson transforms on Cuntz-Toeplitz $C^*$-algebras  (see \cite{Cu})
   and free pluriharmonic functions
  (see \cite{Po-pluriharmonic}, \cite{Po-majorants}) to obtain
    useful  characterizations for  the Harnack equivalence
 on the closed ball $[B(\cH)^n]_1^-$.  On the other hand, a characterization of positive
 free pluriharmonic functions (see \cite{Po-pluriharmonic}) and dilation theory (see \cite{SzF-book})
  are used
 to obtain  a Harnack type inequality (see \cite{Co}) for positive free
 pluriharmonic
 function on
$[B(\cH)^n]_1$. More precisely, we show that if $u$ is a  positive
free pluriharmonic function on $[B(\cH)^n]_1$ with operator-valued
coefficients in $B(\cE)$ and $0<r<1$, then
$$
  u(0) \,\frac{1-r}{1+r}\leq u(X_1,\ldots, X_n) \leq  u(0)
   \,\frac{1+r}{1-r}
$$
  for any  $ (X_1,\ldots, X_n)\in [B(\cH)^n]_r^-.$
This result  is crucial in order  to  prove that the
  open unit ball $[B(\cH)^n]_1$ is a distinguished Harnack part of
  $[ B(\cH)^n]_1^-$, namely, the Harnack part of $0$.

In Section 2, we introduce  a hyperbolic   ({\it Poincar\'e-Bergman}
type) metric
  on the Harnack parts of $[B(\cH)^n]_1^-$.
More precisely,   given  a Harnack part  $\Delta$ of
$[B(\cH)^n]_1^-$  we define $\delta:\Delta\times \Delta \to \RR^+$
by setting
\begin{equation*}
 \delta(A,B):=\ln \omega(A,B),\quad A,B\in \Delta,
\end{equation*}
where
\begin{equation*}
 \omega(A,B):=\inf\left\{ c> 1: \
A\,\overset{H}{{\underset{c}\sim}}\, B   \right\}.
\end{equation*}
We prove that  $\delta$ is a metric on  $\Delta$.

Consider the particular case when $\Delta=[B(\cH)^n]_1$ and let
 $\delta :[B(\cH)^n]_1\times [B(\cH)^n]_1\to [0,\infty)$ be
the hyperbolic metric defined   above.
We prove, in Section 2,  that $\delta$ is  invariant under
the action of the group $Aut([B(\cH)^n]_1)$  of all the free holomorphic
automorphisms of the noncommutative ball $[B(\cH)^n]_1$, i.e.,
$$\delta(\Psi(A), \Psi(B))=\delta(A,B),\qquad A,B\in [B(\cH)^n]_1,
$$
for all $\Psi\in Aut([B(\cH)^n]_1)$. We mention that the group
$Aut([B(\cH)^n]_1)$
 was determined in \cite{Po-automorphism}.

Using a characterization of  the Harnack equivalence  on $[B(\cH)^n]_1^-$ in terms of free
pluriharmonic kernels, we  obtain  an explicit formula for the hyperbolic distance
in terms of the reconstruction operator. More precisely, we show that
\begin{equation*}
\delta(A,B)=\ln \max \left\{ \left\|C_{A} C_{B}^{-1} \right\|,
  \left\|C_{B} C_{A}^{-1} \right\|\right\},\quad A, B\in [B(\cH)^n]_1,
\end{equation*}
where $C_X:=( \Delta_X\otimes I)(I-R_X)^{-1}$ and $R_X:=X_1^*\otimes
R_1+\cdots + X_n^*\otimes R_n$ is the reconstruction operator
associated with  the $n$-tuple $X:=(X_1,\ldots, X_n)\in [B(\cH)^n]_1$ and with
the right creation operators $R_1,\ldots, R_n$ on
the full Fock space  with $n$ generators.
In particular, we show that
$\delta|_{\BB_n\times \BB_n}$ coincides with the Poincar\'e-Bergman
distance on $\BB_n$, i.e.,
$$
\delta(z,w)=\frac{1}{2}\ln
\frac{1+\|\psi_z(w)\|_2}{1-\|\psi_z(w)\|_2},\qquad z,w\in \BB_n,
$$
where $\psi_z$ is the involutive automorphism of $\BB_n$ that
interchanges $0$ and $z$.
We mention that   similar  results  concerning the invariance under the automorphism group
$Aut([B(\cH)^n]_1)$ as well as  an explicit formula for the hyperbolic metric hold  on
any Harnack part
of $[B(\cH)^n]_1^-$.

In Section 3,  we study the relations between the $\delta$-topology,
the $d_H$-topology (which will be introduced), and the operator norm
topology on  Harnack parts of $[B(\cH)^n]_1^-$. We prove that the
hyperbolic metric  $\delta$ is a complete metric on any Harnack part
of $$[B_0(\cH)^n]_1:=\left\{ (X_1,\ldots, X_n)\in [B(\cH)^n]_1^-:\
r(X_1,\ldots, X_n)<1\right\}, $$ and that all the topologies above
coincide on the open ball $[B(\cH)^n]_1$. In particular, we deduce
that $[B(\cH)^n]_1$ is a complete metric space with respect to the
hyperbolic metric $\delta$ and that the $\delta$-topology and the
usual   operator norm topology coincide on $[B(\cH)^n]_1$.

A very important property of the Poincar\' e-Bergman distance
$\beta_m:\BB_m\times \BB_m\to \RR^+$ is that any holomorphic function $f:\BB_n\to \BB_m$
is distance-decreasing, i.e.,
$$\beta_m(f(z), f(w))\leq \beta_n(z,w),\quad z,w\in \BB_n.
$$
  In Section 4, we  extend this result and prove a
Schwarz-Pick lemma for free holomorphic functions on $[B(\cH)^n]_1$
with operator-valued coefficients, with respect to the hyperbolic
metric on the noncommutative ball $[B(\cH)^n]_1$.

 More precisely,
let $F_j:[B(\cH)^n]_1\to B(\cH) \otimes_{min} B(\cE)$, $j=1,\ldots,
m$, be free holomorphic functions with coefficients in $B(\cE)$, and
assume that $F:=(F_1,\ldots, F_m)$ is a contractive
$(\|F\|_\infty\leq 1$) free holomorphic function. If $X,Y\in
[B(\cH)^n]_1$, then we prove that
 $F(X)\overset{H}{\sim}\, F(Y)$ and
$$
\delta(F(X), F(Y))\leq \delta(X,Y),
$$
where $\delta$ is the hyperbolic  metric  defined on the Harnack
parts of the noncommutative ball $[B(\cH)^n]_1^-$.

 The present paper makes
connections between noncommutative   function  theory (see
\cite{Po-poisson}, \cite{Po-holomorphic}, \cite{Po-pluriharmonic},
\cite{Po-automorphism}) and   classical results in hyperbolic
complex analysis and geometry (see \cite{Ko1}, \cite{Ko2},
\cite{Kr}, \cite{Co}, \cite{Ga},  \cite{H}, \cite{Ru}). In
particular, we obtain a new formula of the Poincar\' e-Bergman
metric on $\BB_n$  using Harnack inequalities for positive free
pluriharmonic functions on $[B(\cH)^n]_1$, as well as  a formula in
terms of the left creation operators on the the full Fock space with
$n$ generators.

It would be interesting to see if the results of this paper can be
extended to more general infinite-dimensional bounded domains such
as the $JB^*$-algebras of Harris \cite{Ha1}, the domains considered
by Phillips \cite{Ph}, or the noncommutative domains from
\cite{Po-domains}.  Since our results are based on  noncommutative
function theory, dilation and model theory for  row contractions, we
are   inclined to believe in a positive  answer at least  for the
latter domains. We would  also like to thank the referee for useful
comments on the results of the paper and for bringing to our
attention several references.

\bigskip

\section{Harnack equivalence on the closed unit ball $[B(\cH)^n]_1^-$}

In this section, we  introduce  a preorder relation
$\overset{H}{\prec} $ on the closed ball $[B(\cH)^n]_1^-$ and
provide
    several   characterizations. This preorder
induces an equivalence relation $\overset{H}\sim$ on
$[B(\cH)^n]_1^-$, whose equivalence classes are called Harnack
parts. Several   characterizations for  the  Harnack parts are
provided.
    We obtain a Harnack type inequality  for positive free
 pluriharmonic
 functions and  use it
  to  prove that the
  open unit ball $[B(\cH)^n]_1$ is a distinguished Harnack part of
  $[ B(\cH)^n]_1^-$, namely, the Harnack part of $0$.

 Let $H_n$ be an $n$-dimensional complex  Hilbert space with
orthonormal
      basis
      $e_1$, $e_2$, $\dots,e_n$, where $n=1,2,\dots$, or $n=\infty$.
       We consider the full Fock space  of $H_n$ defined by
      $$F^2(H_n):=\CC1\oplus \bigoplus_{k\geq 1} H_n^{\otimes k},$$
      where  $H_n^{\otimes k}$ is the (Hilbert)
      tensor product of $k$ copies of $H_n$.
      Define the left  (resp.~right) creation
      operators  $S_i$ (resp.~$R_i$), $i=1,\ldots,n$, acting on $F^2(H_n)$  by
      setting
      $$
       S_i\varphi:=e_i\otimes\varphi, \quad  \varphi\in F^2(H_n),
      $$
       (resp.~$
       R_i\varphi:=\varphi\otimes e_i, \quad  \varphi\in F^2(H_n).
      $)
The noncommutative disc algebra $\cA_n$ (resp.~$\cR_n$) is the norm
closed algebra generated by the left (resp.~right) creation
operators and the identity. The   noncommutative analytic Toeplitz
algebra $F_n^\infty$ (resp.~$\cR_n^\infty$)
 is the  weakly
closed version of $\cA_n$ (resp.~$\cR_n$). These algebras were
introduced in \cite{Po-von} in connection with a noncommutative von
Neumann  type inequality \cite{vN}, and have been studied in several papers
(see \cite{Po-multi}, \cite{Po-funct}, \cite{Po-analytic}, \cite{Po-disc},  \cite{DP2}, \cite{DP1}, and the references
there in).

We need to  recall from \cite{Po-poisson} a few facts about
noncommutative Poisson transforms associated with row contractions
$T:=(T_1,\ldots, T_n)\in [B(\cH)^n]_1^-$.

     Let
$\FF_n^+$ be the unital free semigroup on $n$ generators
      $g_1,\dots,g_n$, and the identity $g_0$.  We denote
$e_\alpha:= e_{i_1}\otimes\cdots \otimes  e_{i_k}$ and $e_{g_0}:=1$.
Note that $\{e_\alpha\}_{\alpha\in \FF_n^+}$ is an orthonormal basis
for $F^2(H_n)$. For  each $0<r\leq 1$, define the defect operator
$\Delta_{T,r}:=(I_\cH-r^2T_1T_1^*-\cdots -r^2 T_nT_n^*)^{1/2}$.
The noncommutative Poisson  kernel associated with $T$ is the family
of operators
$$
K_{T,r} :\cH\to  \overline{\Delta_{T,r}\cH} \otimes  F^2(H_n), \quad
0<r\leq 1,
$$
defined by
\begin{equation*}
K_{T,r}h:= \sum_{k=0}^\infty \sum_{|\alpha|=k} r^{|\alpha|}
\Delta_{T,r} T_\alpha^*h\otimes  e_\alpha,\quad h\in \cH.
\end{equation*}
When $r=1$, we denote $\Delta_T:=\Delta_{T,1}$ and $K_T:=K_{T,1}$.
The operators $K_{T,r}$ are isometries if $0<r<1$, and
$$
K_T^*K_T=I_\cH- \text{\rm SOT-}\lim_{k\to\infty} \sum_{|\alpha|=k}
T_\alpha T_\alpha^*.
$$
Thus $K_T$ is an isometry if and only if $T$ is a {\it pure} row
 contraction,
i.e., $ \text{\rm SOT-}\lim\limits_{k\to\infty} \sum_{|\alpha|=k}
T_\alpha T_\alpha^*=0.$ We denote by $C^*(S_1,\ldots, S_n)$ the
Cuntz-Toeplitz $C^*$-algebra generated by the left creation operators.
The noncommutative Poisson transform at
 $T:=(T_1,\ldots, T_n)\in [B(\cH)^n]_1^-$ is the unital completely contractive  linear map
 $P_T:C^*(S_1,\ldots, S_n)\to B(\cH)$ defined by
 \begin{equation*}
 P_T[f]:=\lim_{r\to 1} K_{T,r}^* (I_\cH \otimes f)K_{T,r}, \qquad f\in C^*(S_1,\ldots,
 S_n),
\end{equation*}
 where the limit exists in the norm topology of $B(\cH)$. Moreover, we have
 $$
 P_T[S_\alpha S_\beta^*]=T_\alpha T_\beta^*, \qquad \alpha,\beta\in \FF_n^+.
 $$
 When $T:=(T_1,\ldots, T_n)$  is a pure row contraction,
   we have $$P_T[f]=K_T^*(I_{\cD_{T}}\otimes f)K_T,
   $$
   where $\cD_T=\overline{\Delta_T \cH}$.
We refer to \cite{Po-poisson}, \cite{Po-curvature},  and
\cite{Po-unitary} for more on noncommutative Poisson transforms on
$C^*$-algebras generated by isometries. For basic results concerning  completely bounded maps
 and operator spaces we refer to \cite{Pa-book}, \cite{Pi}, and \cite{ER}.

When $T\in [B(\cH)^n]_1^-$ is a completely
non-coisometric (c.n.c.) row contraction, i.e.,
 there is no $h\in \cH$, $h\neq 0$, such that
 $$
 \sum_{|\alpha|=k}\|T_\alpha^* h\|^2=\|h\|^2
 \quad \text{\rm for any } \ k=1,2,\ldots,
 $$
an  $F_n^\infty$-functional calculus was developed  in
\cite{Po-funct}.
 We showed that if $f=\sum\limits_{\alpha\in \FF_n^+} a_\alpha S_\alpha$ is
 in $F_n^\infty$, then
 \begin{equation*}
 \Gamma_T(f)=f(T_1,\ldots, T_n):=
 \text{\rm SOT-}\lim_{r\to 1}\sum_{k=0}^\infty
  \sum_{|\alpha|=k} r^{|\alpha|} a_\alpha T_\alpha
\end{equation*}
exists and $\Gamma_T:F_n^\infty\to B(\cH)$ is a completely
contractive homomorphism and WOT-continuous (resp. SOT-continuous)
on bounded sets. Moreover, we showed (see \cite{Po-unitary})  that
$\Gamma_T(f)=P_T[f]$, $f\in F_n^\infty$, where
 \begin{equation}
 \label{pois-sot}
  P_T[f]:= \text{\rm
SOT-}\lim_{r\to 1}K_{T,r} ( I_\cH\otimes f) K_{T,r},\qquad f\in
F_n^\infty,
\end{equation} is the extension of the noncommutative Poisson
transform  to the noncommutative analytic Toeplitz algebra  $F_n^\infty$.

 We   introduced  in \cite{Po-pluriharmonic} the noncommutative Poisson
 transform $\cP\mu$
of a completely bounded linear map   $\mu:\cR_n^*+ \cR_n\to B(\cE)$
by setting
$$
(\cP\mu)(X_1,\ldots, X_n):=( \text{\rm id}\otimes
\mu)\left[P(X,R)\right], \qquad X:=(X_1,\ldots,X_n)\in [B(\cH)^n]_1,
$$
where the {\it free  pluriharmonic Poisson kernel} $P(X,R)$  is
given by
$$
P(X,R):=\sum_{k=1}^\infty\sum_{|\alpha|=k}
 X_\alpha^*\otimes R_{\widetilde\alpha}
+I+\sum_{k=1}^\infty\sum_{|\alpha|=k} X_\alpha\otimes
R_{\widetilde\alpha}^*,\qquad X\in [B(\cH)^n]_1,
$$
and the series are convergent in the operator norm topology.
  We recall that the  joint spectral radius associated
 with an  $n$-tuple of operators
$ (X_1,\ldots, X_n)\in B(\cH)^n$  is given by
$$
r(X_1,\ldots, X_n):=\lim_{k\to \infty}\left\|\sum_{|\alpha|=k}
X_\alpha X_\alpha^*\right\|^{1/2k}.
$$
We remark that  the free  pluriharmonic Poisson kernel $P(X,R)$
makes sense  for any   $n$-tuple of operators $X:=(X_1,\ldots, X_n)\in B(\cH)^n$
with $r(X_1,\ldots, X_n)<1$.
  According to \cite{Po-unitary}, $X\in [B(\cH)^n]_1^-$ if and only if
$$r(X_1,\ldots, X_n)\leq
1\quad \text{ and } \quad P(rX,R)\geq 0, \quad r\in [0,1). $$

We say that  a free pluriharmonic function $u$ is  positive
 if  $u(X_1,\ldots, X_n)\geq 0$ for any
$(X_1,\ldots, X_n)\in [B(\cK)^n]_\gamma$ and  any Hilbert space
$\cK$.  We recall \cite{Po-pluriharmonic}  that  $u\geq 0$ if and
only if $u(rS_1,\ldots, rS_n)\geq 0$ for any $r\in [0,1)$. In
particular, if $p$ is a noncommutative polynomial with
operator-valued coefficients, then $\text{\rm Re}\,p\geq 0$ if and
only if $\text{\rm Re}\,p(S_1,\ldots, S_n)\geq 0$.

Now, we  introduce  a preorder relation
$\overset{H}{\prec} $ on the closed ball $[B(\cH)^n]_1^-$ and
provide
    several   characterizations.
Let $A:=(A_1,\ldots, A_n)$ and $B:=(B_1,\ldots, B_n)$ be in
$[B(\cH)^n]_1^-$. We say that $A$ is Harnack dominated by $B$, and
denote $A\overset{H}{\prec}\, B$, if there exists $c>0$ such that
$$\text{\rm Re}\,p(A_1,\ldots, A_n)\leq c^2 \text{\rm
Re}\,p(B_1,\ldots, B_n) $$
 for any noncommutative polynomial with
matrix-valued coefficients $p\in \CC[X_1,\ldots, X_n]\otimes M_{m}$,
$m\in \NN$, such that $\text{\rm Re}\,p\geq 0$.
 When we
want to emphasize the constant $c$, we write
$A\overset{H}{{\underset{c}\prec}}\, B$.

\begin{theorem}
\label{equivalent} Let $A:=(A_1,\ldots, A_n)$ and $B:=(B_1,\ldots,
B_n)$ be in $[B(\cH)^n]_1^-$ and let $c>0$. Then the following
statements are equivalent:
\begin{enumerate}
\item[(i)]
$A\overset{H}{{\underset{c}\prec}}\, B$;
\item[(ii)] $ (P_A\otimes_{min}\text{\rm id})[q^*q]\leq c^2
 (P_B\otimes_{min}\text{\rm id})[q^*q]$ for any
polynomial $q=\sum_{|\alpha|\leq k} S_\alpha\otimes C_{(\alpha)}$
with matrix-valued coefficients $C_{(\alpha)}\in M_{m}$, and
$k,m\in \NN$, where $P_X$ is the noncommutative Poisson transform at $X\in
[B(\cH)^n]_1^-$;
\item[(iii)]
$P( rA, R)\leq c^2 P(rB, R)$ for any $r\in [0,1)$, where $P(X,R)$ is
the free pluriharmonic  Poisson kernel  associated with $X\in
[B(\cH)^n]_1$;
\item[(iv)]
$u(rA_1,\ldots, rA_n)\leq c^2 u(rB_1,\ldots, rB_n)$ for any positive
free pluriharmonic function $u$  with operator valued coefficients
and any $r\in [0,1)$;
\item[(v)]
$  c^2{P}_B-{P}_A$   is a completely positive linear map on the
operator space $\overline{\cA_n^*+ \cA_n}^{\|\cdot\|}$.
\end{enumerate}
\end{theorem}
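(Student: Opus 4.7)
The argument hinges on two identities: the Poisson-transform identity $\text{\rm Re}\,p(X)=(P_X\otimes \text{\rm id})[\text{\rm Re}\,p(S_1,\dots,S_n)]$ for matrix polynomials $p$ and $X\in[B(\cH)^n]_1^-$, and the analogous identity $P(rX,R)=(P_X\otimes \text{\rm id})[P(rS,R)]$ for the pluriharmonic Poisson kernel ($r\in[0,1)$). The plan is to close the cycle (iv)$\Rightarrow$(i)$\Leftrightarrow$(v)$\Rightarrow$(iii)$\Rightarrow$(iv), and to handle (ii) separately via (i)$\Leftrightarrow$(ii).

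The ``pluriharmonic arc'' proceeds as follows. For (iv)$\Rightarrow$(i): a matrix polynomial $p$ with $\text{\rm Re}\,p\geq 0$ determines a positive free pluriharmonic function $u:=\text{\rm Re}\,p$, so (iv) yields $\text{\rm Re}\,p(rA)\leq c^2\text{\rm Re}\,p(rB)$ for $r\in[0,1)$, and letting $r\to 1^-$ gives (i) by norm-continuity of polynomial evaluation. For (v)$\Rightarrow$(iii): since $c^2 P_B-P_A$ is CP on $\overline{\cA_n^*+\cA_n}$, the amplification $(c^2P_B-P_A)\otimes_{\min} \text{\rm id}_{B(F^2(H_n))}$ is positive on the minimal tensor product, and applied to the positive element $P(rS,R)\in(\cA_n^*+\cA_n)\otimes_{\min}B(F^2(H_n))$ it yields the inequality in (iii). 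For (iii)$\Rightarrow$(iv): invoke the integral representation from \cite{Po-pluriharmonic}, $u(X)=(\text{\rm id}\otimes\mu)[P(X,R)]$ for a completely positive $\mu:\cR_n^*+\cR_n\to B(\cE)$, and apply the positive map $\text{\rm id}\otimes\mu$ to the inequality in (iii).

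The ``algebraic arc'' handles the CP equivalences. For (i)$\Leftrightarrow$(v): any self-adjoint element of $M_m(\cA_n^*+\cA_n)$ is of the form $\text{\rm Re}\,p(S)$ for some polynomial $p\in\cA_n\otimes M_m$, and these are norm-dense in the self-adjoint part of $M_m(\overline{\cA_n^*+\cA_n})$; given a positive $x$ in the closure, approximate by $\text{\rm Re}\,p_k(S)+\epsilon_k I\geq 0$, apply (i) at matrix level $m$, and pass to the limit using norm-continuity of $P_A\otimes \text{\rm id}$ and $P_B\otimes \text{\rm id}$, yielding the complete positivity of $c^2 P_B-P_A$. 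The reverse is immediate via the Poisson identity. For (i)$\Leftrightarrow$(ii): for $q\in\cA_n\otimes M_m$, the isometry relations $S_i^*S_j=\delta_{ij}I$ reduce $q^*q$ to a positive element of $M_m(\cA_n^*+\cA_n)$ of the form $\text{\rm Re}\,\tilde p(S)$, giving (i)$\Rightarrow$(ii); conversely, a noncommutative Fej\'er--Riesz/sum-of-squares decomposition of positive matrix polynomials in $\cA_n^*+\cA_n$ as $\sum_i q_i^*q_i$ allows (ii) applied termwise to deliver (i).

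The main obstacle I anticipate is the polynomial-density step underlying (i)$\Leftrightarrow$(v) --- ensuring that every positive element of $M_m(\overline{\cA_n^*+\cA_n})$ is approximable in norm by positive polynomial representatives $\text{\rm Re}\,p_k(S)$ --- together with the noncommutative Fej\'er--Riesz factorization invoked for (ii)$\Rightarrow$(i). Both fit naturally within the noncommutative function-theoretic framework developed in \cite{Po-pluriharmonic} and Popescu's earlier works, but require careful $\epsilon$-perturbation arguments to preserve positivity during the approximation.
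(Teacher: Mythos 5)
Your proof is correct but follows a genuinely different cycle than the paper. The paper proceeds $(i)\Leftrightarrow(ii)$ and then $(i)\Rightarrow(iii)\Rightarrow(iv)\Rightarrow(v)\Rightarrow(i)$, whereas you close $(iv)\Rightarrow(i)\Leftrightarrow(v)\Rightarrow(iii)\Rightarrow(iv)$ plus $(i)\Leftrightarrow(ii)$.

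Two steps differ substantively from the paper. For $(i)\Rightarrow(iii)$ the paper introduces the truncated right creation $n$-tuples $R^{(m)}$ (compressions to polynomials of degree $\leq m$), observes that $P(rX, R^{(m)})$ is the real part of a matrix polynomial and is positive, applies (i), and then invokes Lemma 8.1 of \cite{Po-pluriharmonic} to let $m\to\infty$. Your $(v)\Rightarrow(iii)$ bypasses the truncation entirely: you amplify the CP map $c^2 P_B-P_A$ by $\mathrm{id}_{B(F^2(H_n))}$ and apply it directly to the positive element $P(rS,R)\in \overline{\cA_n^*+\cA_n}^{\|\cdot\|}\otimes_{\min}B(F^2(H_n))$, using $(P_X\otimes\mathrm{id})[P(rS,R)]=P(rX,R)$. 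This is cleaner, though it rests on (v) rather than (i) being in hand. Similarly, for the ``$\Rightarrow$(v)'' leg the paper goes from (iv), using Theorem 4.1 of \cite{Po-pluriharmonic} to recognize $X\mapsto (P_X\otimes\mathrm{id})[g]$ as a positive pluriharmonic function and then letting $r\to 1^-$; you instead go from (i) via density of $\{\text{\rm Re}\,p(S)\}$ ($p$ a matrix polynomial) in the self-adjoint part of $M_m(\overline{\cA_n^*+\cA_n})$, with the standard $\epsilon$-perturbation $p\mapsto p+\epsilon I$ to stay inside the positive cone. Both of these replacements are sound and avoid citing the two auxiliary results from \cite{Po-pluriharmonic} (Lemma 8.1 and Theorem 4.1), though your route still relies on the representation $u=(\mathrm{id}\otimes\mu)[P(\cdot,R)]$ (Corollary 5.5 of \cite{Po-pluriharmonic}) for $(iii)\Rightarrow(iv)$, and on the Fej\'er-type factorization of \cite{Po-analytic} for $(ii)\Rightarrow(i)$, exactly as the paper does. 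One minor imprecision: the paper's factorization yields a single square $q^*q$ rather than a sum $\sum_i q_i^* q_i$, but termwise application and linearity of the Poisson transform make your formulation equally valid.
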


\begin{proof} First we prove the implimation $(i)\leftrightarrow (ii)$.
Assume that (i) holds, and let $q=\sum_{|\alpha|\leq k} S_\alpha\otimes C_{(\alpha)}$
 be an arbitrary  polynomial in $\cA_n\otimes M_m$. Since the left creation operators are isometries with orthogonal ranges,
 $q^*q$ has
    form $\text{\rm Re}\,p(S_1,\ldots, S_n)$ for some
    noncommutative polynomial  $p\in \CC[X_1,\ldots, X_n]\otimes
M_{m}$, $m\in \NN$, such that $\text{\rm Re}\,p\geq 0$. Using the properties of the noncommutative
 Poisson transform, one can see that $(i)\implies (ii)$.
 Conversely, assume that (ii) holds. Let $p\in \CC[X_1,\ldots, X_n]\otimes
M_{m}$, $m\in \NN$, be a polynomial of degree $k$  such that
$\text{\rm Re}\,p\geq 0$. Then $\text{\rm Re}\,p(S_1,\ldots, S_n)$
is a positive multi-Toeplitz operator with respect to $R_1,\ldots,
R_n$
    acting on the Hilbert space
   $F^2(H_n)\otimes \CC^m$, i.e.,
   $$
   (R_i^*\otimes I_{\CC^m}) [\text{\rm Re}\,p(S_1,\ldots, S_n)](R_j\otimes I_{\CC^m})=\delta_{ij}G,\qquad
   i,j=1,\ldots, n.
   $$
    According
    to the Fej\' er type
   factorization theorem of \cite{Po-analytic}, there
   exists a multi-analytic operator
   $q$ in  $B(F^2(H_n)\otimes \CC^m)$ such that
   $\text{\rm Re}\,p(S_1,\ldots, S_n)= q^* q$ and
   $$
   (S_\alpha^*\otimes I_{\CC^m}) q (1\otimes h)=0
   $$
   for any $h\in \CC^m$ and $|\alpha|>k$.
   Therefore, $q$ is a polynomial, i.e.,
   $q=\sum\limits_{|\alpha|\leq k} S_\alpha \otimes C_{(\alpha)}$
   for some operators $C_{(\alpha)}\in B(\CC^m)$. Note that
   \begin{equation*}
   \begin{split}
\text{\rm Re}\,p(A_1,\ldots, A_n)&=
(P_A\otimes_{min}\text{\rm id})[q^*q]
\leq c^2
 (P_B\otimes_{min}\text{\rm id})[q^*q]
 = c^2 \text{\rm
Re}\,p(B_1,\ldots, B_n),
   \end{split}
   \end{equation*}
which proves  (i).

Let us prove that $(i)\implies (iii)$.
For each $m\in \NN$, consider
$R^{(m)}:=(R_1^{(m)},\ldots, R_n^{(m)})$, where   $R_i^{(m)}$,  $i=1,\ldots,
 n$, is the compression of the right creation operator $R_i$ to $\cP_m:=\text{\rm
 span}\,\{e_\alpha:\ \alpha\in \FF_n^+, |\alpha|\leq m\}$. Note that
 $R_\alpha^{(m)}=0$ for any $\alpha\in \FF_n^+$ with $|\alpha|\geq
 m+1$ and, consequently, we have
 $$
P( rX, R^{(m)})=\sum_{1\leq |\alpha|\leq m} r^{|\alpha|}
X_\alpha^*\otimes R_{\widetilde\alpha}^{(m)} +I +\sum_{1\leq
|\alpha|\leq m} r^{|\alpha|} X_\alpha \otimes
{R_{\widetilde\alpha}^{(m)}}^*.
$$
Note that $R^{(m)}$ is a pure row contraction and the noncommutative
Poisson transform $\text{\rm id}\otimes P_{R^{(m)}}$ is a completely positive map.
We recall that $X\mapsto P(X,R)$ is a positive free pluriharmonic
 function with coefficients in $B(F^2(H_n)$.
Hence  $P(rX, R)\geq 0$ for any $X\in [B(\cH)^n]_1^-$ and  $r\in
 [0,1)$.  Applying $\text{\rm id}\otimes P_{R^{(m)}}$, we obtain
 $$
P(rX, R^{(m)})=(\text{\rm id}\otimes
P_{R^{(m)}})\left[P(rX,S)\right]\geq 0.
$$
Now, applying (i), we obtain
$$
P(rA, R^{(m)})\leq c^2 P(rB, R^{(m)})$$
 for any $m\in \NN$ and  $r\in [0,1)$. Using  Lemma 8.1 from
 \cite{Po-pluriharmonic}, we deduce that
 $P(rA, R)\leq c^2 P(rB, R)$ for any $r\in [0,1)$. Therefore (iii)
 holds.

  To prove the implication $(iii)\implies (iv)$, assume that
 condition (iii) holds and let $u$ be a positive free pluriharmonic
 function with coefficients in $B(\cE)$. According to Corollary 5.5
 from \cite{Po-pluriharmonic}, there exists a completely positive
 linear map $\mu: \cR_n^* + \cR_n\to B(\cE)$ such that
 $$
 u(Y)=(\cP \mu)(Y):=( \text{\rm id}\otimes \mu)(P(Y,R))
 $$
 for any $Y\in [B(\cH)^n]_1$. Hence and using the fact that
 $c^2P(rB, R)-P(rA, R)\geq 0$, we deduce that
 $$
 c^2 u(rB_1,\ldots, rB_n)-u(rA_1,\ldots, rA_n)=( \text{\rm
 id}\otimes \mu)[c^2P(rB, R)-P(rA, R)]\geq 0,
$$
which proves (iv).

Now, we prove the implication $(iv)\implies (v)$. Let $g\in
\overline{\cA_n^*+ \cA_n}^{\|\cdot\|}\otimes M_{m}$ be positive.
Then, according to  Theorem 4.1 from \cite{Po-pluriharmonic},  the
map defined by
  \begin{equation}\label{u}
  u(X):=(P_X\otimes \text{\rm id})[g],\qquad X\in [B(\cH)^n]_1,
  \end{equation}
   is a positive  free
  pluriharmonic function. Condition (iv) implies
  $u(rA_1,\ldots, rA_n)\leq c^2 u(rB_1,\ldots, rB_n)$ for   any $r\in
  [0,1)$. On the other hand, by relation \eqref{u}, we have
\begin{equation}
\label{cc}
c^2(P_{rB}\otimes \text{\rm id})[ g]-(P_{rA}\otimes \text{\rm id})[
g]=c^2 u(rB_1,\ldots, rB_n)- u(rA_1,\ldots, rA_n)\geq 0
\end{equation}
for any $r\in [0,1)$. Since $g\in \overline{\cA_n^*+
\cA_n}^{\|\cdot\|}\otimes M_{m}$, we have
\begin{equation*}
(P_{A}\otimes \text{\rm id})[ g]=\lim_{r\to 1}
(P_{rA}\otimes \text{\rm id})[ g]\quad \text{ and }
 \quad (P_{B}\otimes \text{\rm id})[ g]=\lim_{r\to 1}
(P_{rB}\otimes \text{\rm id})[ g]
\end{equation*}
where the convergence is in the operator norm topology. Taking $r\to
1$ in \eqref{cc}, we deduce item (v). To prove the implication
$(v)\implies (i)$, let $p\in \CC[X_1,\ldots, X_n]\otimes M_{m}$,
$m\in \NN$, be a noncommutative polynomial with matrix coefficients
such that $\text{\rm Re}\,p\geq 0$.  Due to the proprieties of the
noncommutative Poisson transform, we have
$$
c^2 \text{\rm Re}\,p(B_1,\ldots, B_n)- \text{\rm Re}\,p(A_1,\ldots,
A_n)=c^2(P_B\otimes \text{\rm id})[\text{\rm
Re}\,p(S_1,\ldots,S_n)]-(P_A\otimes \text{\rm id})[\text{\rm
Re}\,p(S_1,\ldots,S_n)].
$$
Since $\text{\rm Re}\,p(S_1,\ldots, S_n)\geq 0$ and  $
c^2{P}_B-{P}_A$   is a completely positive linear map on the
operator space $\overline{\cA_n^*+ \cA_n}^{\|\cdot\|}$, we deduce
item (i). This completes the proof.
\end{proof}

We remark that each item in Theorem \ref{equivalent} is equivalent
to  the following: $$ P( rA, R^{(m)})\leq c^2 P( rB, R^{(m)})\quad
\text{
 for any }\ m\in \NN, r\in [0,1),
 $$
where  $R^{(m)}$ is defined in the proof of Theorem \ref{equivalent}.

In  what follows, we characterize  the elements of the closed ball $[B(\cH)^n]_1^-$
which are Harnack dominated  by $0$.

\begin{theorem}
\label{A<0} Let $A:=(A_1,\ldots, A_n)$   be in $[B(\cH)^n]_1^-$.
Then $A\overset{H}{{ \prec}}\, 0$ if and only if the joint spectral
radius  $r(A_1,\ldots, A_n)<1$.
\end{theorem}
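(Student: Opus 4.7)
The plan is to use the equivalence $(i)\Leftrightarrow(iii)$ of Theorem \ref{equivalent}---that $A\overset{H}{\prec}0$ is equivalent to the existence of $c>0$ with $P(rA,R)\leq c^2 I$ for all $r\in[0,1)$---together with the representation $P(X,R)=(I-R_X)^{-1}+(I-R_X^*)^{-1}-I=2\,\text{\rm Re}\,(I-R_X)^{-1}-I$ and the identity $r(R_A)=r(A_1,\ldots,A_n)$. This identity follows at once from $R_{\widetilde\alpha}^*R_{\widetilde\beta}=\delta_{\alpha\beta}I$ for $|\alpha|=|\beta|$, which gives $(R_A^k)^*R_A^k=\bigl(\sum_{|\alpha|=k}A_\alpha A_\alpha^*\bigr)\otimes I$, so $\|R_A^k\|^{1/k}=\|\sum_{|\alpha|=k}A_\alpha A_\alpha^*\|^{1/2k}\to r(A)$.

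For the implication $r(A)<1\Rightarrow A\overset{H}{\prec}0$: if $r(R_A)<1$ then $\sigma(R_A)\subset\{|z|<1\}$, the operator-valued map $z\mapsto(I-zR_A)^{-1}$ is analytic and uniformly bounded on a neighborhood of $\overline\DD$, and hence $\|P(rA,R)\|$ is bounded for $r\in[0,1)$; positivity of $P(rA,R)$ converts this into $P(rA,R)\leq c^2 I$, and Theorem \ref{equivalent}(iii) yields $A\overset{H}{\prec}0$.

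For the converse, the first step I would take is to prove rotation invariance: the gauge $*$-automorphism $\gamma_\theta:S_i\mapsto e^{i\theta}S_i$ preserves the set of noncommutative polynomials with non-negative real part, so the defining inequality $(i)$ applied to $\gamma_\theta(p)$ shows $e^{i\theta}A\overset{H}{\underset{c}\prec}0$ for every $\theta$. Applying $(iii)$ to $e^{i\theta}A$ and using $R_{e^{i\theta}A}=e^{-i\theta}R_A$ upgrades the estimate to the disc-wide bound
\begin{equation*}
\text{\rm Re}\,(I-zR_A)^{-1}\leq \frac{c^2+1}{2}\,I,\qquad z\in\DD.
\end{equation*}
Assume for contradiction $r(A)=1$, so $r(R_A)=1$ and $\sigma(R_A)\cap\TT\neq\emptyset$. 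Any such spectral value $\lambda$ is a boundary point of $\sigma(R_A)\subseteq\overline\DD$, hence lies in the approximate point spectrum, so there are unit vectors $\xi_n$ with $\varepsilon_n:=R_A\xi_n-\lambda\xi_n$ satisfying $\delta_n:=\|\varepsilon_n\|\to 0$. Setting $z_n:=(1-\sqrt{\delta_n})/\lambda\in\DD$ and $F(z):=(I-zR_A)^{-1}$, the identity $(I-z_nR_A)\xi_n=\sqrt{\delta_n}\,\xi_n-z_n\varepsilon_n$ rearranges to
\begin{equation*}
F(z_n)\xi_n=\delta_n^{-1/2}\,\xi_n+\delta_n^{-1/2}\,z_n F(z_n)\varepsilon_n,
\end{equation*}
and the Neumann bound $\|F(z_n)\|\leq 1/(1-|z_n|\|R_A\|)\leq \delta_n^{-1/2}$ forces the second summand to contribute only $O(1)$ in the inner product with $\xi_n$, so that $\text{\rm Re}\,\langle F(z_n)\xi_n,\xi_n\rangle\geq \delta_n^{-1/2}-O(1)\to\infty$, contradicting the disc-wide bound.

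The main obstacle is this converse: the hypothesis controls only the \emph{real part} of the resolvent along the positive real axis, so the rotation-invariance step is essential in order to transport an approximate eigenvalue of $R_A$ at an arbitrary angle on $\TT$ into a direction from which the blow-up can be read off, and a careful diagonal calibration of $\delta_n$ and $z_n$ is needed so that the correction term $\delta_n^{-1/2}z_n F(z_n)\varepsilon_n$ does not swallow the principal singular contribution $\delta_n^{-1/2}\xi_n$.
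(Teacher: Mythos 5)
Your proposal is correct and follows essentially the same route as the paper: both reduce the question to the joint spectral radius of the reconstruction operator, both establish a disc-wide (rotation-invariant) bound on the free pluriharmonic kernel (you via a gauge automorphism on positive polynomials, the paper via the Poisson transform $\mathrm{id}\otimes P_{e^{i\theta}R}$), and both derive a contradiction from an approximate eigenvalue of $R_A$ on the unit circle. The paper works with the factored form $P=C_X^*C_X$ and conjugates by $I-rR_A$ to avoid resolvent bounds, whereas you work with the additive form $P=2\,\mathrm{Re}\,(I-R_X)^{-1}-I$ and a Neumann-series estimate with the calibration $1-|z_n|=\sqrt{\delta_n}$; these are cosmetic differences in the same blow-up argument.
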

\begin{proof}
Note that the map $X\mapsto P(X,R)$ is a positive  free pluriharmonic function on
$[B(\cH)^n]_1$ with coefficients in
 $B(F^2(H_n))$ and has the factorization
\begin{equation*}
\begin{split}
P(X,R)&= (I-R_X)^{-1}-I+ (I-R_X^*)^{-1}\\
&=(I-R_X^*)^{-1}\left[ I-R_X-(I-R_X^*)(I-R_X)+ I-R_X^*
\right](I-R_X)^{-1}\\
&= (I-R_X^*)^{-1}\left[  (I-X_1X_1^*-\cdots -X_nX_n^*)\otimes I
\right](I-R_X)^{-1},
\end{split}
\end{equation*}
where $R_X:=X_1^*\otimes R_1+\cdots +X_n^*\otimes R_n$ is the
reconstruction operator associated with  the $n$-tuple
$X:=(X_1,\ldots, X_n)\in[B(\cH)^n]_1$. We remark that,  due to the
fact that the spectral radius of $R_X$ is equal
 to $r(X_1,\ldots, X_n)$,
the factorization above  holds  for any $X\in [B(\cH)^n]_1^-$
with $r(X_1,\ldots, X_n)<1$.

Now, using
 Theorem \ref{equivalent} part (iii) and the factorization obtained above,
 we deduce that $A\overset{H}{{ \prec}}\, 0$
if and only if  there exists $c\geq 1$ such that
$$
(I-rR_A^*)^{-1}\left[  (I-r^2A_1A_1^*-\cdots - r^2 A_n A_n^*)\otimes
I\right] (I-rR_A)^{-1}\leq c^2I
$$
for any $r\in [0,1)$. Similar inequality holds if we replace the
right creation operators by the left creation operators. Then,
applying the noncommutative Poisson transform $ \text{\rm id}\otimes
P_{e^{i\theta}R }$   we obtain
 \begin{equation}
\label{equi3} (I-r^2A_1A_1^*-\cdots - r^2 A_n A_n^*)\otimes I\leq
c^2(I-re^{-i\theta}R_A^*)(I-re^{i\theta}R_A)
\end{equation}
for any $r\in [0,1)$ and $\theta\in  \RR$.

 Assume now that
$A:=(A_1,\ldots, A_n)\in [B(\cH)^n]_1^-$ is such that
$A\overset{H}{{ \prec}}\, 0$. Then $r(A_1,\ldots, A_n)\leq 1$.
Suppose that $r(A_1,\ldots, A_n)= 1$. Since $r(R_A)=r(A_1,\ldots,
A_n)$, there exists $\lambda_0\in \TT$ in the approximative spectrum
of $R_A$. Consequently, there is a sequence $\{h_m\}$ in $\cH\otimes
F^2(H_n)$ such that $\|h_m\|=1$ and $\|\lambda_0h_m-R_Ah_m\|\leq
\frac{1}{m}$ for $m=1,2,\ldots$. Hence and taking $r=1-\frac{1}{m}$
in relation \eqref{equi3}, we deduce that
\begin{equation}
\begin{split}
\left< \left(I-\left(1-\frac{1}{m}\right)^2 R_A^* R_A\right)h_m,
h_m\right> &\leq c^2\left\| h_m-\left(
1-\frac{1}{m}\right)\bar\lambda_0 R_A h_m\right\|^2\\
&\leq c^2\left( \left\|\lambda_0h_m-R_A h_m\right\|+ \frac{1}{m}
\|R_A h_m\|\right)^2 \leq \frac{4c^2}{m^2}.
\end{split}
\end{equation}
 Combining this result with the fact that
$\|R_Ah_m\|\leq 1$, we deduce that
$$
1-\left(1-\frac{1}{m}\right)^2\leq
1-\left(1-\frac{1}{m}\right)^2\|R_Ah_m\|^2\leq \frac{4c^2}{m^2}.
$$
 Hence, we obtain  $2m\leq 4c^2 +1$ for any
$m=1,2,\ldots$, which is a contradiction. Therefore, we have
$r(A_1,\ldots, A_n)< 1$.

Conversely, assume that  $A:=(A_1,\ldots, A_n)\in [B(\cH)^n]_1^-$
has the joint spectral radius  $r(A_1,\ldots, A_n)< 1$.  Note that $
M:=\sup_{r\in (0,1)} \|(I-rR_A)^{-1}\| $ exists and, therefore,
$$
(I-rR_A^*)^{-1}\left[ (I-r^2A_1A_1^*-\cdots - r^2 A_n A_n^*)\otimes
I\right] (I-rR_A)^{-1}\leq M^2I
$$
for any $r\in (0,1)$, which, due to Theorem \ref{equivalent},
 shows that $A\overset{H}{{ \prec}}\, 0$.
The proof is complete.
\end{proof}
We mention that in the particular case  when $n=1$ we can recover a
result obtained
 in \cite{AST}.

   Since $\overset{H}{\prec} $ is a preorder
relation on $[B(\cH)^n]_1^-$, it induces an equivalent relation
$\overset{H}\sim$ on $[B(\cH)^n]_1^-$, which we call Harnack
equivalence. The equivalence classes with respect to
$\overset{H}\sim$ are called Harnack parts of $[B(\cH)^n]_1^-$. Let
$A:=(A_1,\ldots, A_n)$ and $B:=(B_1,\ldots, B_n)$ be in
$[B(\cH)^n]_1^-$. It is easy to see that $A$ and $B$ are Harnack
equivalent (we denote $A\overset{H}{\sim}\, B$)  if and only if
there exists $c\geq  1$ such that
\begin{equation}
\label{<<}
 \frac{1}{c^2}\text{\rm Re}\,p(B_1,\ldots, B_n)\leq
\text{\rm Re}\,p(A_1,\ldots, A_n)\leq c^2 \text{\rm
Re}\,p(B_1,\ldots, B_n)
\end{equation}
for any noncommutative polynomial with matrix-valued coefficients
$p\in \CC[X_1,\ldots, X_n]\otimes M_{m}$, $m\in \NN$, such
that $\text{\rm Re}\,p\geq 0$. We also use the
notation $A\overset{H}{{\underset{c}\sim}}\, B$ if
$A\overset{H}{{\underset{c}\prec}}\, B$ and
$B\overset{H}{{\underset{c}\prec}}\, A$.

 A completely positive  (c.p.) linear map  $\mu_X:C^*(S_1,\ldots, S_n)\to
 B(\cH)$ is called  {\it representing c.p. map} for the point
 $X:=(X_1,\ldots, X_n)\in [B(\cH)^n]_1^-$ if
 $$
 \mu(S_\alpha)=X_\alpha   \quad \text{ for any } \ \alpha\in
 \FF_n^+.
 $$

  Next, we obtain several characterizations for the Harnack equivalence on the closed ball
$[B(\cH)^n]_1^-$. The result will play a crucial role in this paper.

\begin{theorem}
\label{equivalent2} Let $A:=(A_1,\ldots, A_n)$ and $B:=(B_1,\ldots,
B_n)$ be in $[B(\cH)^n]_1^-$ and let $c>1$. Then the following
statements are equivalent:
\begin{enumerate}
\item[(i)]
$A\overset{H}{{\underset{c}\sim}}\, B$;
\item[(ii)] the noncommutative Poisson transform satisfies  the inequalities
$$ \frac{1}{c^2}(P_B\otimes_{min}\text{\rm id})[q^*q]\leq
(P_A\otimes_{min}\text{\rm id})[q^*q]\leq c^2
(P_B\otimes_{min}\text{\rm id})[q^*q]$$ for any polynomial
$q=\sum_{|\alpha|\leq k} S_\alpha\otimes A_{(\alpha)}$ with
matrix-valued coefficients $A_{(\alpha)}\in M_{m}$, and
$k,m\in \NN$;
\item[(iii)]  the free pluriharmonic kernel satisfies the inequalities
$$\frac{1}{c^2} P(rB,R)\leq P( rA, R)\leq c^2 P(rB, R)$$
for any $r\in [0,1)$;
\item[(iv)]for any positive free pluriharmonic function
$u$  with operator valued coefficients and any $r\in [0,1)$,
$$\frac{1}{c^2}u(rB_1,\ldots, rB_n)\leq  u(rA_1,\ldots, rA_n)\leq c^2
u(rB_1,\ldots, rB_n);$$
\item[(v)]
$  c^2{P}_B-{P}_A$  and  $  c^2{P}_A-{P}_B$ is a completely positive
linear map on the operator space $\overline{\cA_n^*+
\cA_n}^{\|\cdot\|}$, where ${ P}_X$ is the noncommutative Poisson
transform at $X\in [B(\cH)^n]_1^-$;
\item[(vi)] there are   representing c.p. maps
$\mu_A$ and $\mu_B$ for $A$ and $B$, respectively, such that
$$
\frac{1}{c^2} \mu_B\leq \mu_A\leq c^2 \mu_B.
$$
\end{enumerate}
\end{theorem}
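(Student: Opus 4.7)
The equivalences $(i) \Leftrightarrow (ii) \Leftrightarrow (iii) \Leftrightarrow (iv) \Leftrightarrow (v)$ follow immediately from Theorem \ref{equivalent} by symmetrization. By definition, $A \overset{H}{{\underset{c}\sim}} B$ is the conjunction of $A \overset{H}{{\underset{c}\prec}} B$ and $B \overset{H}{{\underset{c}\prec}} A$. Applying items (ii)--(v) of Theorem \ref{equivalent} to each of these two preorder inequalities and pairing the resulting one-sided inequalities yields the two-sided versions asserted in items (ii)--(v) here. So this block of equivalences is automatic.

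The implication $(vi) \Rightarrow (v)$ is a restriction argument. A representing c.p.\ map $\mu_X : C^*(S_1,\ldots,S_n) \to B(\cH)$ agrees with $P_X$ on the generators $S_\alpha$ and (by the $*$-property of c.p.\ maps) on their adjoints, hence on $\cA_n + \cA_n^*$ by linearity, and hence on the operator system $V:=\overline{\cA_n^* + \cA_n}^{\|\cdot\|}$ by norm continuity; thus $\mu_X|_V = P_X$. Restricting the inequality $\tfrac{1}{c^2}\mu_B \leq \mu_A \leq c^2 \mu_B$ from $C^*(S_1,\ldots,S_n)$ to $V$ gives that $c^2 P_B - P_A$ and $c^2 P_A - P_B$ are completely positive on $V$, which is (v).

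The main step is $(v) \Rightarrow (vi)$. A naive separate Arveson extension of $P_A$ and $P_B$ is insufficient, since c.p.\ extensions are not unique and the Harnack inequality need not survive on $C^*(S_1,\ldots,S_n)$. The idea is to extend the \emph{differences} and recover $\mu_A, \mu_B$ algebraically. Set
\begin{equation*}
\nu_1 := c^2 P_B - P_A, \qquad \nu_2 := c^2 P_A - P_B,
\end{equation*}
both c.p.\ on $V$ by (v). By Arveson's extension theorem they admit c.p.\ extensions $\tilde\nu_1, \tilde\nu_2 : C^*(S_1,\ldots,S_n) \to B(\cH)$. Using the hypothesis $c>1$ (so $c^4-1>0$), define
\begin{equation*}
\mu_A := \frac{\tilde\nu_1 + c^2 \tilde\nu_2}{c^4 - 1}, \qquad \mu_B := \frac{c^2 \tilde\nu_1 + \tilde\nu_2}{c^4 - 1}.
\end{equation*}
These are positive linear combinations of c.p.\ maps, hence c.p. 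Substituting the definitions of $\nu_1,\nu_2$ on $V$ yields $\mu_A|_V = P_A$ and $\mu_B|_V = P_B$, so in particular $\mu_A(S_\alpha) = A_\alpha$ and $\mu_B(S_\alpha) = B_\alpha$, confirming that $\mu_A, \mu_B$ are representing. The coefficients were chosen precisely so that
\begin{equation*}
c^2 \mu_B - \mu_A = \tilde\nu_1, \qquad c^2 \mu_A - \mu_B = \tilde\nu_2,
\end{equation*}
which are c.p.\ on $C^*(S_1,\ldots,S_n)$. This is exactly $\tfrac{1}{c^2}\mu_B \leq \mu_A \leq c^2 \mu_B$, establishing (vi).

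The main obstacle is the last implication: it is tempting but wrong to simply extend $P_A$ and $P_B$ individually. Realizing that one should instead extend $\nu_1$ and $\nu_2$ and back-solve for $\mu_A, \mu_B$ via an invertible linear system (which requires $c^4\ne 1$, i.e., $c>1$) is the whole trick.
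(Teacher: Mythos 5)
Your proof is correct and matches the paper's argument in its essential structure: the first five equivalences follow by applying Theorem \ref{equivalent} to each of the two preorders $A\overset{H}{{\underset{c}\prec}}B$ and $B\overset{H}{{\underset{c}\prec}}A$, and for $(v)\Rightarrow(vi)$ the paper likewise extends the two c.p.\ differences via Arveson and back-solves the $2\times 2$ linear system (using $c>1$, hence $c^4-1\neq 0$) to recover representing maps $\mu_A,\mu_B$ satisfying the sandwich inequality. Your $(vi)\Rightarrow(v)$ via the observation that any representing c.p.\ map agrees with $P_X$ on $\overline{\cA_n^*+\cA_n}^{\|\cdot\|}$ is a minor, correct variation on the paper's route $(vi)\Rightarrow(i)$, which evaluates the inequality directly on polynomials with positive real part.
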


\begin{proof} The first five equivalences follow using Theorem
\ref{equivalent}. It remains to show that $(i)\leftrightarrow (vi)$.
Assume that $A\overset{H}{{\underset{c}\sim}}\, B$. Then according
to  item (v), $P_A-\frac{1}{c^2} P_B$ and $P_B-\frac{1}{c^2} P_A$
are completely positive linear maps on the operator space
 $\overline{\cA_n^*+ \cA_n}^{\|\cdot\|}$. Using Arveson's extension
 theorem, we find  some  completely positive linear maps $\varphi$ and $\psi$  on
 the Cuntz-Toeplitz algebra
 $C^*(S_1,\ldots, S_n)$ which are extensions of $P_A-\frac{1}{c^2} P_B$ and $P_B-\frac{1}{c^2}
P_A$, respectively. Define the c.p. maps $\mu_A,
\mu_B:C^*(S_1,\ldots, S_n) \to B(\cH)$ by setting
\begin{equation}
\label{muab}
 \mu_A:=\frac{c^2}{c^4-1} (c^2\varphi+ \psi)\quad \text{
and } \quad \mu_B:=\frac{c^2}{c^4-1} (c^2\psi+ \varphi).
\end{equation}
Note that for any $f\in \overline{\cA_n^*+ \cA_n}^{\|\cdot\|}$, we
have
$$
P_A[f]=\varphi(f)+\frac{1}{c^2}
P_B[f]=\varphi(f)+\frac{1}{c^2}\left[
\psi(f)+\frac{1}{c^2}P_A\right].
$$
Solving for $P_A[f]$, we obtain $P_A[f]=\mu_A(f)$. Similarly, we
obtain $P_B[f]=\mu_B(f)$. Since $P_A[S_\alpha]=A_\alpha $ and
$P_B[S_\alpha]=B_\alpha$ for any $\alpha\in \FF_n^+$, it is clear
that $\mu_A$, $\mu_B$ are representing c.p. maps  for
$A:=(A_1,\ldots, A_n)$ and $B:=(B_1,\ldots, B_n)$, respectively.
Now, since $c>1$ and using  relation \eqref{muab},   it is a routine  to show
that
\begin{equation}\label{bab}
 \frac{1}{c^2} \mu_B\leq \mu_A\leq c^2 \mu_B.
\end{equation}

Conversely, assume that  (vi) holds for some $c>1$, and let
$p(S_1,\ldots, S_n):=\sum_{|\alpha|\leq q} S_\alpha\otimes
M_{(\alpha)}$ be a polynomial such that $\text{\rm
Re}\,p\geq 0$. Since  $\mu_A$, $\mu_B$ are representing c.p. maps  for
$A:=(A_1,\ldots, A_n)$ and $B:=(B_1,\ldots, B_n)$, respectively, relation
\eqref{bab} implies
$$
\frac{1}{c^2}\text{\rm Re}\,p(B_1,\ldots, B_n)\leq \text{\rm
Re}\,p(A_1,\ldots, A_n)\leq c^2 \text{\rm Re}\,p(B_1,\ldots, B_n).
$$
 This shows  that
$A\overset{H}{{\underset{c}\sim}}\, B$ and  completes the proof.
\end{proof}

We remark that the first five equivalences  in  Theorem
\ref{equivalent2} remain true even when $c\geq 1$.

The  next result is a  Harnack  type inequality for positive free pluriharmonic functions on
 $[B(\cH)^n]_1$.

\begin{theorem}
\label{Harnack} If \,$u$ is a  positive free pluriharmonic function
on $[B(\cH)^n]_1$  with operator-valued  coefficients in $B(\cE)$
and $0\leq r<1$, then
$$
  u(0) \,\frac{1-r}{1+r}\leq u(X_1,\ldots, X_n) \leq  u(0)
   \,\frac{1+r}{1-r}
$$
  for any  $ (X_1,\ldots, X_n)\in [B(\cH)^n]_r^-.$
\end{theorem}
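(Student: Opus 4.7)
The plan is to reduce the Harnack inequality for $u$ to an operator-valued Harnack inequality for the free pluriharmonic Poisson kernel $P(X,R)$ itself, and then transport that pointwise kernel estimate through the Poisson-type representation of positive free pluriharmonic functions. Concretely, by Corollary~5.5 of \cite{Po-pluriharmonic} (which has already been invoked in the proof of Theorem~\ref{equivalent}, implication (iii)$\Rightarrow$(iv)), there exists a completely positive linear map $\mu:\cR_n^*+\cR_n\to B(\cE)$ such that $u(Y)=(\text{\rm id}\otimes \mu)[P(Y,R)]$ for every $Y\in [B(\cH)^n]_1$. Since $P(0,R)=I$, this representation gives $u(0)=I_\cH\otimes \mu(I)$. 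Therefore, once we establish the operator inequality
$$
\frac{1-r}{1+r}\, I \ \le\ P(X,R)\ \le\ \frac{1+r}{1-r}\, I \qquad \text{in }\ B(\cH\otimes F^2(H_n))
$$
for every $X=(X_1,\ldots,X_n)\in [B(\cH)^n]_r^-$, we may apply the positive map $\text{\rm id}\otimes \mu$ to the inequality and immediately obtain the desired bounds $\frac{1-r}{1+r}u(0)\le u(X)\le \frac{1+r}{1-r}u(0)$.

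The core task is the operator inequality itself. Note first that $R_X^*R_X=(X_1X_1^*+\cdots+X_nX_n^*)\otimes I\le r^2 I$, so $\|R_X\|\le r<1$ and hence $I-R_X$ is boundedly invertible with $\|(I-R_X)^{-1}\|\le \tfrac{1}{1-r}$. For the upper bound I would rewrite
$$
P(X,R)=(I-R_X)^{-1}+(I-R_X^*)^{-1}-I=2\,\text{\rm Re}\,(I-R_X)^{-1}-I,
$$
and use $\text{\rm Re}\,(I-R_X)^{-1}\le \|(I-R_X)^{-1}\|\,I\le \tfrac{1}{1-r}I$ to get $P(X,R)\le \tfrac{1+r}{1-r}I$. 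For the lower bound I would exploit the factorization recalled in the proof of Theorem~\ref{A<0},
$$
P(X,R)=(I-R_X^*)^{-1}\bigl[(I-X_1X_1^*-\cdots-X_nX_n^*)\otimes I\bigr](I-R_X)^{-1}.
$$
Since $(I-\sum X_iX_i^*)\otimes I\ge (1-r^2)I$ and $(I-R_X)(I-R_X^*)\le \|I-R_X\|^2 I\le (1+r)^2 I$, the inverse satisfies $(I-R_X^*)^{-1}(I-R_X)^{-1}=[(I-R_X)(I-R_X^*)]^{-1}\ge (1+r)^{-2}I$. Combining these two estimates (the positive middle factor being bounded below by a scalar multiple of the identity) gives $P(X,R)\ge \tfrac{1-r^2}{(1+r)^2}I=\tfrac{1-r}{1+r}I$.

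The main obstacle is the kernel estimate in the second paragraph; once that is in place, Step~3 is a one-line application of the completely positive map $\text{\rm id}\otimes \mu$ to pass from the Poisson kernel to $u$. The argument is the exact noncommutative counterpart of the classical proof of Harnack's inequality from the scalar Poisson kernel bound $\tfrac{1-r}{1+r}\le \tfrac{1-r^2}{|1-re^{i\theta}|^2}\le \tfrac{1+r}{1-r}$, with the factorization of $P(X,R)$ playing the role of the scalar identity $\tfrac{1}{1-z}+\tfrac{1}{1-\bar z}-1=\tfrac{1-|z|^2}{|1-z|^2}$.
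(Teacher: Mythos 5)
Your proposal is correct, and it shares its endpoints with the paper's argument: both reduce the problem to the operator inequality $\frac{1-r}{1+r}\,I\le P(X,R)\le\frac{1+r}{1-r}\,I$ and then finish by pushing this through the representation $u(Y)=(\text{\rm id}\otimes\mu)[P(Y,R)]$ with $\mu$ completely positive (Corollary~5.5 of \cite{Po-pluriharmonic}), using $P(0,R)=I$ so that $u(0)=I_\cH\otimes\mu(I)$. Where you diverge from the paper is in how the kernel estimate itself is obtained. The paper writes $P(rX,R)=\sum_{k\ge1}r^kR_X^k+I+\sum_{k\ge1}r^k(R_X^*)^k$, takes the Sz.-Nagy--Foia\c s minimal unitary dilation of the strict contraction $R_X$ (a bilateral shift $M_{e^{i\theta}}\otimes I_\cG$), and thereby reduces the operator inequality to the classical scalar Poisson-kernel bound $\frac{1-r}{1+r}\le\frac{1-r^2}{1-2r\cos\theta+r^2}\le\frac{1+r}{1-r}$; it then extends by continuity of $X\mapsto P(X,R)$. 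Your route avoids dilation theory entirely: for the upper bound you write $P(X,R)=2\,\text{\rm Re}\,(I-R_X)^{-1}-I$ and use $\text{\rm Re}\,A\le\|A\|\,I$ together with the Neumann-series estimate $\|(I-R_X)^{-1}\|\le(1-r)^{-1}$; for the lower bound you use the factorization $P(X,R)=(I-R_X^*)^{-1}\bigl[(I-\sum X_iX_i^*)\otimes I\bigr](I-R_X)^{-1}$ and bound the middle factor below by $(1-r^2)I$ while bounding $(I-R_X)(I-R_X^*)$ above by $(1+r)^2I$ (hence its inverse below by $(1+r)^{-2}I$). Both are valid; the paper's dilation argument makes the lift of the scalar Harnack kernel transparent, whereas yours is more elementary and self-contained, requiring only basic operator inequalities plus the kernel factorization already exhibited in the proof of Theorem~\ref{A<0}, and it delivers the bound directly on $[B(\cH)^n]_r^-$ without the intermediate $P(rX,R)$ and continuity step.
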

\begin{proof}
Notice that the free pluriharmonic  Poisson kernel satisfies the
relation
$$
P(rX,R)=\sum_{k=1}^\infty r^k R_X^k + I+ \sum_{k=1}^\infty r^k
(R_X^*)^k, \qquad 0\leq r<1,
$$
for any $X:=(X_1,\ldots, X_n)\in [B(\cH)^n]_1$, where $R_X:=
X_1^*\otimes R_1+\cdots +  X_n^*\otimes R_n$ is the reconstruction
operator. Since $R_1,\ldots, R_n$ are isometries with orthogonal
ranges, we have $R_X^* R_X= \sum_{i=1}^n X_iX_i^* \otimes I$ and
therefore $\|R_X\|=\left\|\sum_{i=1}^n X_iX_i^*\right\|^{1/2}<1$.
Let ${\bf U}$ be the minimal unitary dilation of $R_X$ on a Hilbert
space $\cM\supset  \cH\otimes F^2(H_n)$, in the spirit of
Sz.-Nagy--Foia\c s. Then we have $R_X^k=P_{\cH\otimes F^2(H_n)} {\bf
U}^k|_{\cH\otimes F^2(H_n)}$ for any $k=1,2,\ldots$. Since $R_X$ is
a strict contraction, ${\bf U}$ is a bilateral shift which can be
identified with $M_{e^{i\theta}}\otimes I_\cG$ for some Hilbert
space $\cG$, where $M_{e^{i\theta}}$  is the multiplication operator
by $e^{i\theta}$ on $L^2(\TT)$. Consequently,  there is a unitary
operator $W:L^2(\TT)\otimes \cG\to \cM$ such that
\begin{equation}
\label{dil} P(rX,R)=P_{\cH\otimes F^2(H_n)} [W({\bf M}(r)\otimes
I_\cG) W^*]|_{\cH\otimes F^2(H_n)},
\end{equation}
where $$ {\bf M}(r):=\sum_{k=1}^\infty r^kM_{e^{i\theta}}^k+ I+
\sum_{k=1}^\infty r^kM_{e^{-i\theta}}^k.
$$
For each $f\in L^2(\TT)$, we have
$$
\left<{\bf M}(r) f,f\right>=\|f\|_2^2\sum_{k=-\infty}^{k=\infty}
r^{|k|} e^{i\theta k}=\|f\|_2^2\frac{1-r^2}{1-2r\cos \theta +r^2}.
$$
Since
$$\frac{1-r}{1+r}\leq \frac{1-r^2}{1-2r\cos \theta +r^2}\leq
\frac{1+r}{1-r},
$$
we have $\frac{1-r}{1+r} I\leq  {\bf M}(r)\leq \frac{1+r}{1-r}I$.
Hence and due to \eqref{dil}, we deduce that
\begin{equation}
\label{ineq-Pois} \frac{1-r}{1+r} I \leq P( rX, R)\leq
\frac{1+r}{1-r}I
\end{equation}
for any $X\in [B(\cH)^n]_1$ and $0\leq r<1$. Since  $Y\mapsto
P(Y,R)$ is a pluriharmonic function on $[B(\cH)^n]_1$, hence
continuous,  we deduce that \eqref{ineq-Pois} holds for any $X\in
[B(\cH)^n]_1^-$.

On the other hand, since $u$ is a positive pluriharmonic function on
$[B(\cH)^n]_1$,  one can use  \cite{Po-pluriharmonic} (see Corollary
5.5) to find a completely positive linear map $\mu:\cR_n^*+\cR_n\to
B(\cE)$ such that $u$ is the noncommutative  Poisson transform of
$\mu$, i.e.,
$$u(Y_1,\ldots, Y_n)=( \text{\rm id}\otimes \mu)[P(Y,R)],\qquad
Y:=(Y_1,\ldots, Y_n)\in [B(\cH)^n]_1.
$$
Hence, $u(rX_1,\ldots, rX_n)=(\text{\rm id}\otimes \mu)[P( rX, R)]$
for any $X:=(X_1,\ldots, X_n)\in [B(\cH)^n]_1^-$. Now, using
inequalities \eqref{ineq-Pois} and  the fact that $\mu$ is a
completely positive linear map, we obtain
$$
  u(0) \,\frac{1-r}{1+r}\leq u(rX_1,\ldots, rX_n) \leq  u(0)
   \,\frac{1+r}{1-r},\qquad X\in [B(\cH)^n]_1^-.
$$
This completes the proof.
\end{proof}

We recall that if $f\in \cA_n\otimes M_m$ and $(A_1,\ldots, A_n)\in
[B(\cH)^n]_1^-$, then, due to the noncommutative  von Neumann
inequality \cite{Po-von} (see also \cite{Po-funct}), it makes sense
to define   $f(A_1,\ldots, A_n)\in B(\cH)\otimes_{min}M_m$. In this
case we have $\|f(A_1,\ldots, A_n)\|\leq \|f\|. $

Our next task is to determine the Harnack part of $0$. First, we need the
 following technical result.

\begin{lemma}
\label{foias}
 Let
$A:=(A_1,\ldots, A_n)$ and $B:=(B_1,\ldots, B_n)$ be in
$[B(\cH)^n]_1^-$ such that $A\overset{H}{\sim}\, B$, and let
$\{f_k\}_{k=1}^\infty$ be a sequence of elements in $\cA_n\otimes
M_{m}$, $m\in \NN$,  such that $\|f_k\|\leq 1$ for any $k\in \NN$.
Then
$$\lim_{k\to \infty}\|f_k(A_1,\ldots, A_n)\|= 1 \quad \text{ if and
only if }\quad \lim_{k\to \infty}\|f_k(B_1,\ldots, B_n)\|= 1.
$$
\end{lemma}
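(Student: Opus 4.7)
My plan is to invoke the representing c.p.\ map characterization (Theorem \ref{equivalent2}(vi)) and pass to a common Stinespring dilation in which both $f_k(A)$ and $f_k(B)$ become compressions of the same operator $\pi(f_k)$ by two isometries differing by a bounded invertible factor. The generically strict Kadison--Schwarz inequality $(P_X\otimes\text{id})[f_k^*f_k]\ge f_k(X)^*f_k(X)$ is why a purely Poisson-transform argument is insufficient here and a dilation-theoretic route through (vi) is needed: knowing $\|(P_B\otimes\text{id})[f_k^*f_k]\|\to 1$ does not by itself force $\|f_k(B)\|\to 1$, because Kadison--Schwarz is one-sided.

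Since $A\overset{H}{\sim}B$ with some constant $c>1$, Theorem \ref{equivalent2}(vi) provides unital representing c.p.\ maps $\mu_A,\mu_B:C^*(S_1,\ldots,S_n)\to B(\cH)$ with $\frac{1}{c^2}\mu_B\le\mu_A\le c^2\mu_B$. Fix a minimal Stinespring dilation $\mu_B=V^*\pi(\cdot)V$ with $V:\cH\to\cK$ an isometry and $\pi$ a unital $*$-representation; Arveson's Radon--Nikodym theorem, together with the order-preserving RN correspondence, then yields $T\in\pi(C^*(S_1,\ldots,S_n))'$ with $\mu_A=V^*T\pi(\cdot)V$ and $\frac{1}{c^2}I\le T\le c^2I$. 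Unitality of $\mu_A$ gives $V^*TV=I_\cH$, the key algebraic identity I will exploit.

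Since $f_k\in\cA_n\otimes M_m$ is a polynomial, $f_k(X)=(\mu_X\otimes\text{id})(f_k)$, so (suppressing the harmless tensor factor $I_{M_m}$) $f_k(B)=V^*\pi(f_k)V$ and $f_k(A)=V^*T\pi(f_k)V$. Suppose $\|f_k(B)\|\to 1$, and choose unit vectors $h_k\in\cH\otimes\CC^m$ with $\|f_k(B)h_k\|\to 1$. Set $w_k:=\pi(f_k)Vh_k$; then $\|w_k\|\le 1$ and $\|V^*w_k\|=\|f_k(B)h_k\|\to 1$. Because $V^*V=I$, the identity $\|V^*w\|=\|VV^*w\|$ yields $\|VV^*w_k\|\to 1$, and from the orthogonal decomposition $w_k=VV^*w_k+(I-VV^*)w_k$ it follows that $\|(I-VV^*)w_k\|\to 0$.

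Decomposing $f_k(A)h_k=V^*Tw_k=V^*TVV^*w_k+V^*T(I-VV^*)w_k$ and using $V^*TVV^*=(V^*TV)V^*=V^*$, the first summand equals $V^*w_k=f_k(B)h_k$, while the second is bounded in norm by $c^2\|(I-VV^*)w_k\|\to 0$. Hence $f_k(A)h_k-f_k(B)h_k\to 0$, so $\|f_k(A)h_k\|\to 1$ and $\|f_k(A)\|\to 1$. The reverse implication follows by symmetry (equivalently by dilating $\mu_A$ instead). The main obstacle will be the two-sided bound on $T$: the upper bound $T\le c^2 I$ is immediate from Arveson's RN applied to $\frac{1}{c^2}\mu_A\le\mu_B$, but the lower bound $T\ge\frac{1}{c^2}I$ requires the minimality of the Stinespring dilation and the uniqueness/order-preservation in the RN correspondence applied to $\frac{1}{c^2}\mu_B\le\mu_A\le\mu_B$.
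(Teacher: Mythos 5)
Your proof is correct and takes a genuinely different route from the paper's. The paper's argument stays entirely within the Harnack framework of Theorem~\ref{equivalent}: it applies the two-sided Harnack inequality to the real part of the inverse noncommutative Cayley transform $g_k=(I-\alpha_k f_k)^{-1}(I+\alpha_k f_k)$ with $|\alpha_k|\to 1$, introduces auxiliary vectors $y_k=(I-\alpha_k f_k(A))h_k$ and a companion vector built from $B$, computes $\text{\rm Re}\,\langle g_k(\cdot)\,y_k,y_k\rangle$ explicitly, and derives a contradiction with $\|h_k\|=1$ from the hypothesis $\limsup_k\|f_k(B)\|<1$. Your argument instead passes through Theorem~\ref{equivalent2}(vi): from the sandwiched unital representing c.p.\ maps $\mu_A,\mu_B$ you take the minimal Stinespring dilation $\mu_B=V^*\pi(\cdot)V$ and invoke Arveson's Radon--Nikodym theorem to obtain $\mu_A=V^*T\pi(\cdot)V$ with $T\in\pi(C^*(S_1,\ldots,S_n))'$ and $\frac{1}{c^2}I\le T\le c^2 I$, so $f_k(A)$ and $f_k(B)$ become two compressions of the \emph{same} operator $\pi(f_k)$ that differ only by the bounded invertible $T$. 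The unitality identity $V^*TV=I_\cH$ together with the Pythagoras estimate $\|(I-VV^*)w_k\|\to 0$ for $w_k=\pi(f_k)Vh_k$ then yields $f_k(A)h_k-f_k(B)h_k\to 0$ in one clean stroke. The dilation route buys conceptual transparency (it makes visible that Harnack equivalence forces the two functional calculi to share the same almost-maximizing vectors at the boundary) at the cost of invoking Stinespring plus the order-preservation in the RN correspondence that you need for the lower bound on $T$; the paper's Cayley-transform argument is longer and more computational but more elementary and self-contained. Two small corrections to your write-up: $f_k\in\cA_n\otimes M_m$ need not be a polynomial, only a norm limit of polynomials, but since $\mu_X$ and $P_X$ are bounded and agree on polynomials the identity $f_k(X)=(\mu_X\otimes\text{\rm id})(f_k)$ still holds; and the inequality quoted at the very end should read $\frac{1}{c^2}\mu_B\le\mu_A\le c^2\mu_B$ rather than $\le\mu_B$.
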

\begin{proof}
Assume that $\lim_{k\to \infty}\|f_k(A_1,\ldots, A_n)\|= 1$. Then
there is a sequence of vectors $\{h_k\}_{k=1}^\infty\subset
\cH\otimes \CC^m$ such that
$$
\|h_k\|=1 \quad \text{ and }\quad \lim_{k\to
\infty}\|f_k(A_1,\ldots, A_n)h_k\|= 1.
$$
Let $\{\alpha_k\}_{k=1}^\infty \subset \DD$ be such that
$|\alpha_k|\to 1$ as $k\to \infty$. According to Theorem 1.5 from
\cite{Po-free-hol-interp}, the inverse noncommutative Cayley
transform
 $\Gamma^{-1}$ of $\alpha_k
f_r$  is in $\cA_n\otimes M_{m}$ and $\text{\rm
Re\,}\Gamma^{-1}[\alpha_k f_k]\geq 0$. Therefore,
\begin{equation}
\label{Cayley} g_k:=\Gamma^{-1}[\alpha_k f_k]:=(I-\alpha_k f_k)^{-1}
(I+\alpha_k f_k)\in \cA_n\otimes M_{m}
\end{equation}
and $\text{\rm Re\,}g_k\geq 0$ for all $k\in \NN$. Since
$A\overset{H}{\sim}\, B$, Theorem \ref{equivalent} implies the
existence of a constant $c\geq 1$ such that
\begin{equation}
\label{<<2}
 \frac{1}{c^2}\text{\rm Re}\,g_k(A_1,\ldots, A_n)\leq
\text{\rm Re}\,g_k(B_1,\ldots, B_n)\leq c^2 \text{\rm
Re}\,g_k(A_1,\ldots, A_n).
\end{equation}
For each $k\in \NN$, we define the vectors
\begin{equation}
\label{vect} y_k:=[I-\alpha_kf_k(A_1,\ldots, A_n)]h_k \quad \text{
and } \quad  x_k:=[I-\alpha_kf_k(B_1,\ldots, B_n)]y_k.
\end{equation}
Note that due to relations \eqref{Cayley} and \eqref{vect}, we have
\begin{equation*}
\begin{split}
\left<g_k(B_1,\ldots, B_n) y_k, y_k\right>&= \left< \left[
I+\alpha_k f_k(B_1,\ldots, B_n)\right]x_k, \left[ I-\alpha_k
f_k(B_1,\ldots, B_n)\right]x_k\right>\\
&= \|x_k\|^2-|\alpha_k|^2\|f_k(B_1,\ldots, B_n)x_k\|^2 +i\text{\rm
Im\,} \left< \alpha_k f_k(B_1,\ldots, B_n)x_k, x_k\right>.
\end{split}
\end{equation*}
Consequently, we deduce that
$$
\text{\rm Re}\,\left<g_k(B_1,\ldots, B_n) y_k,
y_k\right>=\|x_k\|^2-|\alpha_k|^2\|f_k(B_1,\ldots, B_n)x_k\|^2.
$$
Similarly, we obtain
$$
\text{\rm Re}\,\left<g_k(A_1,\ldots, A_n) y_k,
y_k\right>=\|h_k\|^2-|\alpha_k|^2\|f_k(A_1,\ldots, A_n)h_k\|^2.
$$
Hence and using the second inequality in \eqref{<<2}, we deduce that
$$
\|x_k\|^2-|\alpha_k|^2\|f_k(B_1,\ldots, B_n)x_k\|^2 \leq c^2\left(
\|h_k\|^2-|\alpha_k|^2\|f_k(A_1,\ldots, A_n)h_k\|^2\right).
$$
Consequently, since $\|h_k\|=1$, $ \lim_{k\to
\infty}\|f_k(A_1,\ldots, A_n)h_k\|= 1$ and $|\alpha_k|\to 1$ as
$k\to \infty$, we deduce that
\begin{equation}
\label{conv} \|x_k\|^2-|\alpha_k|^2\|f_k(B_1,\ldots, B_n)x_k\|^2 \to
0,\quad \text{ as } k\to \infty.
\end{equation}
Now, suppose that $\lim_{k\to \infty}\|f_k(B_1,\ldots, B_n)\|\neq
1$. Due to the noncommutative von Neumann inequality we have
$\|f_k(B_1,\ldots, B_n)\|\leq \|f_k\|\leq 1$. Passing to a
subsequence, we can assume that there is $t\in (0,1)$ such that
$\|f_k(B_1,\ldots, B_n)\|\leq t<1$ for all $k\in \NN$. Due to
relation  \eqref{conv} and the fact that
$$
0\leq (1-t^2)\|x_k\|^2\leq \|x_k\|^2-|\alpha_k|^2\|f_k(B_1,\ldots,
B_n)x_k\|^2,
$$
we deduce that  $\|x_k\|\to 0$ as $k\to \infty$. Using relation
\eqref{vect}, we obtain  $y_k=[I-\alpha_kf_k(B_1,\ldots, B_n)]^{-1}
x_k$. Consequently, and using again relation  \eqref{conv},  we have
 \begin{equation}
 \label{fo}
\|[I-\alpha_k f_k(A_1,\ldots, A_n)]h_k\|= \|y_k\|\to 0,\quad \text{
as } \ k\to
 \infty,
 \end{equation}
for any sequence $\{\alpha_k\}_{k=1}^\infty \subset \DD$ with the
property that  $|\alpha_k|\to 1$ as $k\to \infty$. Let
$\{\beta_k\}_{k=1}^\infty \subset \DD$ be another sequence with the
same property and such that $\alpha_k+\beta_k\to 0$ as $k\to
\infty$. Then, due to \eqref{fo}, we have
$$
\lim_{k\to \infty}\|2 h_k-(\alpha_k+ \beta_k)f_k(A_1,\ldots, A_n)
h_k\|=0.
$$
Taking into account that $\|f_k(A_1,\ldots, A_n) h_k\|\leq 1$ for
all $k\in \NN$, we deduce that $\|h_k\|\to 0$ as $k\to \infty$,
which contradicts the fact that $\|h_k\|=1$ for all $k\in \NN$.
Therefore, we must have $\lim_{k\to \infty}\|f_k(B_1,\ldots,
B_n)\|=1$. The converse follows in a similar manner if one uses the
first inequality in \eqref{<<2}. The proof is complete.
\end{proof}

 Now, we have all the ingredients to  we determine the Harnack part of $0$ and obtain  a
  characterization in terms of the free pluriharmonic  Poisson kernel.
\begin{theorem}
\label{foias2} Let $A:=(A_1,\ldots, A_n)$   be in $[B(\cH)^n]_1^-$.
Then the following statements are equivalent:
\begin{enumerate}
\item[(i)]
$A\overset{H}{\sim}\, 0$;
\item[(ii)] $A\in [B(\cH)^n]_1$;
\item[(iii)]
 $r(A_1,\ldots, A_n)<1$ and $P(A, R)\geq aI$ for some constant $a>0$,
 where $P(A, R)$ is
the free pluriharmonic  Poisson kernel at $A$.
\end{enumerate}
\end{theorem}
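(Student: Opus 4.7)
The plan is to close the cycle by proving $(ii)\Rightarrow(i)$, $(i)\Rightarrow(ii)$, and $(ii)\Leftrightarrow(iii)$, with the principal tools being the Harnack inequality (Theorem \ref{Harnack}), Lemma \ref{foias}, and the factorization
\[
P(X,R)=(I-R_X^*)^{-1}\bigl[(I-X_1X_1^*-\cdots-X_nX_n^*)\otimes I\bigr](I-R_X)^{-1}
\]
of the free pluriharmonic Poisson kernel derived in the proof of Theorem \ref{A<0}, which is valid whenever $r(X_1,\ldots,X_n)<1$.

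For $(ii)\Rightarrow(i)$, I would set $\rho:=\|A_1A_1^*+\cdots+A_nA_n^*\|^{1/2}<1$. Then $rA\in[B(\cH)^n]_\rho^-$ for every $r\in[0,1)$, so Theorem \ref{Harnack} applied to an arbitrary positive free pluriharmonic function $u$ with operator-valued coefficients delivers
\[
\frac{1-\rho}{1+\rho}\,u(0)\leq u(rA_1,\ldots,rA_n)\leq\frac{1+\rho}{1-\rho}\,u(0).
\]
Setting $B:=0$ in Theorem \ref{equivalent2}(iv) then yields $A\overset{H}{\sim}0$ with constant $c^2=(1+\rho)/(1-\rho)$. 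For $(i)\Rightarrow(ii)$ I would argue by contradiction: if $A\notin[B(\cH)^n]_1$ then $\|A_1A_1^*+\cdots+A_nA_n^*\|=1$. Choose $f\in\cA_n\otimes M_n$ to be the matrix with $S_1,\ldots,S_n$ in its first row and zeros elsewhere, so $\|f\|=1$, $f(0)=0$, and $\|f(A_1,\ldots,A_n)\|=\|A_1A_1^*+\cdots+A_nA_n^*\|^{1/2}=1$. Applying Lemma \ref{foias} to the constant sequence $f_k:=f$ then contradicts $A\overset{H}{\sim}0$, since the left-hand limit is $1$ while the right-hand limit is $0$.

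For $(ii)\Leftrightarrow(iii)$, under $(ii)$ the middle factor in the factorization is $\geq(1-\rho^2)I$, and $\|(I-R_A)^{-1}\|\leq(1-\rho)^{-1}$ since $\|R_A\|^2=\|\sum_iA_iA_i^*\|\leq\rho^2$; a direct estimate then gives $P(A,R)\geq\frac{1-\rho}{1+\rho}I$. Conversely, assuming $(iii)$ the factorization is available because $r(A_1,\ldots,A_n)<1$, and $P(A,R)\geq aI$ rewrites as $(I-\sum_iA_iA_i^*)\otimes I\geq a(I-R_A^*)(I-R_A)$. Testing this operator inequality on $\xi:=h\otimes 1$ with $h\in\cH$ a unit vector and $1=e_{g_0}$ the vacuum of $F^2(H_n)$, and using that $R_A(h\otimes 1)=\sum_iA_i^*h\otimes e_i$ is orthogonal to $h\otimes 1$ so that $\|(I-R_A)(h\otimes 1)\|^2\geq 1$, one obtains $\langle(I-\sum_iA_iA_i^*)h,h\rangle\geq a$ for every unit $h\in\cH$; hence $\|\sum_iA_iA_i^*\|\leq 1-a<1$. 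The main subtlety is exactly this last step, which requires a test vector producing the automatic lower bound $\|(I-R_A)\xi\|\geq\|\xi\|$; the vacuum vector accomplishes this because the right creation operators $R_i$ move $1\in F^2(H_n)$ into the first-level subspace $H_n\subset F^2(H_n)$, orthogonal to $\CC 1$.
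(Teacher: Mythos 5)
Your proof is correct, and the first three implications---$(ii)\Rightarrow(i)$ via Theorem \ref{Harnack} and Theorem \ref{equivalent2}(iv), $(i)\Rightarrow(ii)$ via Lemma \ref{foias} applied to the constant sequence built from $[S_1\,\cdots\,S_n]$, and $(ii)\Rightarrow(iii)$ via the factorization of $P(A,R)$---coincide in substance with the paper's. Where you genuinely diverge is in the final direction: the paper closes the cycle by proving $(iii)\Rightarrow(i)$, using $P(A,S)\geq aI$, the noncommutative Poisson transform $\mathrm{id}\otimes P_{rR}$, and Theorem \ref{equivalent}(iii) to conclude $0\overset{H}{\prec}A$ (and invoking Theorem \ref{A<0} for $A\overset{H}{\prec}0$). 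You instead prove $(iii)\Rightarrow(ii)$ directly: conjugating $P(A,R)\geq aI$ by $I-R_A$ gives $(I-\sum_i A_iA_i^*)\otimes I\geq a(I-R_A^*)(I-R_A)$, and testing on the vacuum vectors $h\otimes 1$---for which $R_A(h\otimes 1)=\sum_i A_i^*h\otimes e_i$ is orthogonal to $h\otimes 1$, hence $\|(I-R_A)(h\otimes 1)\|\geq\|h\|$---yields $\|\sum_i A_iA_i^*\|\leq 1-a$. This is a more elementary and self-contained route: it bypasses the Poisson-transform machinery and Theorem \ref{equivalent} entirely and extracts $\|A\|<1$ from the kernel bound by a pure Hilbert-space computation, at the cost of being a pointwise vacuum-vector trick rather than reusing the general Harnack-domination calculus the paper is building up. Both arguments are sound; the paper's version fits its running framework, while yours is the shorter standalone proof.
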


\begin{proof}
First, we prove the implication $(i)\implies (ii)$. Assume that
$A\overset{H}{\sim}\, 0$ and $\|A\|=1$.  For each $k\in \NN$ define
$$
f_k:=\left[\begin{matrix} S_1&\cdots& S_n\\
0&\cdots &0\\
\vdots&  & \vdots\\
0&\cdots & 0\end{matrix}\right]\in \cA_n\otimes M_{n}.
$$
Then $\|f_k(A_1,\ldots, A_n)\|=\|A\|=1$. Applying Lemma \ref{foias},
we deduce that $0=\|f_k(0)\|\to 1$, as $k\to \infty$, which is a
contradiction. Therefore $\|A\|<1$.

Now, we prove that $(ii)\implies (i)$.  Let $A:=(A_1,\ldots, A_n)$
be in $[B(\cH)^n]_1$ and let $r:=\|A\|<1$. According to Theorem
\ref{Harnack}, we have
$$
  u(0) \,\frac{1-r}{1+r}\leq u(X_1,\ldots, X_n) \leq  u(0)
   \,\frac{1+r}{1-r}
$$
 for any   positive free pluriharmonic
function  $u$ on $[B(\cH)^n]_1$  with operator-valued coefficients
in $B(\cE)$. Applying now Theorem \ref{equivalent2}, we deduce that
$A\overset{H}{\sim}\, 0$.

To prove that $(ii)\implies (iii)$, let $A\in [B(\cH)^n]_1$.  Note
that $r(A_1,\ldots, A_n)\leq \|A\|<1$, which implies the operators
$I-R_A$ and $I-A_1A_1^*-\cdots -A_nA_n^*\geq 0$ are invertible.
Hence, and using the fact that
$$
P(A, R)=(I-R_A^*)^{-1}\left[  (I-A_1A_1^*-\cdots -A_nA_n^*)\otimes I
\right](I-R_A)^{-1},
$$
one can easily deduce that there exists $a>0$ such that $P(A,R)\geq
aI$. Therefore (iii) holds. It remains to show that $(iii)\implies
(i)$. Assume that (iii) holds.  Due to Theorem \ref{A<0}, we have
$A\overset{H}{{ \prec}}\, 0$ and
$$P(X,S)=\sum_{k=1}^\infty\sum_{|\alpha|=k}
 X_\alpha^*\otimes S_{\widetilde\alpha}
+I+\sum_{k=1}^\infty\sum_{|\alpha|=k} X_\alpha\otimes
S_{\widetilde\alpha}^*,\qquad X\in [B(\cH)^n]_1,
$$
where the series are convergent in the operator norm topology. On
the other hand, since $P(A,S)\geq aI$, applying the noncommutative
Poisson transform $\text{\rm id}\otimes P_{rR}$, we obtain
$$
P(rA,R)=(\text{\rm id}\otimes P_{rR})[P(A,S)]\geq aI=aP(0,R)
$$
for any $r\in [0,1)$. Due to   Theorem \ref{equivalent}, equivalence
$(i)\leftrightarrow (iii)$, we have
 $0\overset{H}{{ \prec}}\, A$.  Therefore, item (i)
holds. The proof is complete.
\end{proof}

We remark that, when $n=1$,  we recover a  result
  obtained by Foia\c s \cite{Fo}.

\bigskip

\section{ Hyperbolic metric  on the Harnack parts of the closed ball  $[B(\cH)^n]_1^-$}

In this section we  introduce  a hyperbolic   ({\it
Poincar\'e-Bergman} type) metric $\delta$
  on the Harnack parts of $[B(\cH)^n]_1^-$, and  prove that it is  invariant under
the action of the group $Aut([B(\cH)^n]_1)$  of all the free
holomorphic automorphisms of the noncommutative ball $[B(\cH)^n]_1$.
We   obtain  an explicit formula for the hyperbolic distance in
terms of the reconstruction operator and show that
$\delta|_{\BB_n\times \BB_n}$ coincides with the Poincar\'e-Bergman
distance on $\BB_n$, the open unit ball of $\CC^n$.

Given $A,B\in [B(\cH)^n]_1^-$ in the same Harnack part, i.e.,
$A\,\overset{H}{\sim}\, B$, we introduce
\begin{equation}
\label{om} \omega(A,B):=\inf\left\{ c > 1: \
A\,\overset{H}{{\underset{c}\sim}}\, B   \right\}.
\end{equation}

\begin{lemma}\label{omega}
Let $\Delta$ be a Harnack part of $[B(\cH)^n]_1^-$ and let $A,B,C\in
\Delta$. Then the following properties hold:
\begin{enumerate}
\item[(i)] $\omega(A,B)\geq 1$;
\item[(ii)] $\omega(A,B)=1$ if and only if $A=B$;
\item[(iii)] $\omega(A,B)=\omega(B,A)$;
\item[(iv)] $\omega(A,C)\leq \omega(A,B) \omega(B,C)$.
\end{enumerate}
\end{lemma}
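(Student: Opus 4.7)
My plan is to dispatch the easy items first. Part (i) is immediate: by the definition \eqref{om}, $\omega(A,B)$ is the infimum of a subset of $(1,\infty)$, hence is at least $1$. Part (iii) follows from the manifest symmetry of the two-sided inequality \eqref{<<}: the relation $A\,\overset{H}{{\underset{c}\sim}}\,B$ is unchanged when $A$ and $B$ are swapped, so the defining sets for $\omega(A,B)$ and $\omega(B,A)$ coincide.

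For part (iv), the key is submultiplicativity under chaining. If $c_1>\omega(A,B)$ and $c_2>\omega(B,C)$, then $A\,\overset{H}{{\underset{c_1}\sim}}\,B$ and $B\,\overset{H}{{\underset{c_2}\sim}}\,C$, so for every noncommutative polynomial $p\in\CC[X_1,\ldots,X_n]\otimes M_m$ with $\text{\rm Re}\,p\geq 0$, combining the two inequalities in \eqref{<<} produces
$$
\frac{1}{(c_1c_2)^2}\,\text{\rm Re}\,p(C_1,\ldots,C_n)\leq \text{\rm Re}\,p(A_1,\ldots,A_n)\leq (c_1c_2)^2\,\text{\rm Re}\,p(C_1,\ldots,C_n),
$$
so $A\,\overset{H}{{\underset{c_1c_2}\sim}}\,C$ and $\omega(A,C)\leq c_1c_2$. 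Letting $c_1\to \omega(A,B)^+$ and $c_2\to\omega(B,C)^+$ yields the desired inequality; the boundary case $\omega(A,B)=1$ is handled via part (ii), which reduces it to $A=B$ and makes (iv) trivial.

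Part (ii) carries the real content. The direction $A=B\Rightarrow\omega(A,B)=1$ is immediate: the inequalities in \eqref{<<} hold with $c=1$ and thus with every $c>1$, so combined with (i) one gets $\omega(A,A)=1$. For the converse, I would suppose $\omega(A,B)=1$ and choose $c_k\downarrow 1$ with $A\,\overset{H}{{\underset{c_k}\sim}}\,B$, then invoke Theorem \ref{equivalent2}(iii) (which by the remark following that theorem remains valid for $c\geq 1$) to obtain
$$
\frac{1}{c_k^2}P(rB,R)\leq P(rA,R)\leq c_k^2 P(rB,R),\qquad r\in [0,1).
$$
Passing to the limit $k\to\infty$ gives $P(rA,R)=P(rB,R)$ for every $r\in[0,1)$. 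The final step is to recover $A_\alpha=B_\alpha$ for every $\alpha\in\FF_n^+$ by applying both kernels to a vector $h\otimes 1$, where $1$ is the vacuum of $F^2(H_n)$; since $R_{\widetilde\alpha}^*(h\otimes 1)=0$ for $|\alpha|\geq 1$, the coadjoint part is annihilated, leaving
$$
h\otimes 1+\sum_{|\alpha|\geq 1} r^{|\alpha|}A_\alpha^* h\otimes e_{\widetilde\alpha}=h\otimes 1+\sum_{|\alpha|\geq 1} r^{|\alpha|}B_\alpha^* h\otimes e_{\widetilde\alpha}.
$$
Matching coefficients of $r^k$ and using orthonormality of $\{e_{\widetilde\alpha}\}_{\alpha\in\FF_n^+}$ then forces $A_\alpha^* h=B_\alpha^* h$ for every $h\in\cH$ and every $\alpha$, hence $A=B$. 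The only mildly delicate point is this last coefficient extraction from the norm-convergent Poisson series; the remaining arguments are structural consequences of the characterizations already established in Section 1.
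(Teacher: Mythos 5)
Your proof is correct and follows essentially the same route as the paper: (i) and (iii) are read off the definition, (iv) by chaining the Harnack inequalities (the paper phrases this via Theorem \ref{equivalent2} applied with the constants $\omega(A,B)$, $\omega(B,C)$ directly, while you take $c_1\downarrow\omega(A,B)$, $c_2\downarrow\omega(B,C)$ — both work), and (ii) by passing to the limit in the kernel inequalities and evaluating $P(rA,R)=P(rB,R)$ on vectors $h\otimes 1$. The only slip is cosmetic: $R_{\widetilde\alpha}(1)=e_\alpha$, not $e_{\widetilde\alpha}$, but this does not affect the coefficient-matching argument or the conclusion $A_i=B_i$.
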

\begin{proof} The items (i) and (iii) are obvious due to relations
\eqref{<<} and \eqref{om}. If $\omega(A,B)=1$, then
$A\,\overset{H}{{\underset{1}\sim}}\, B$ and, due to Theorem
\ref{equivalent}, part (iii), we have $P(rA, R)=P(rB, R)$ for any
$r\in [0,1)$.  Applying this equality to vectors of the form
$h\otimes 1$, $h\in \cH$, we obtain
$$
\sum_{|\alpha|\geq 1} r^{|\alpha|} A_\alpha ^*h\otimes
 e_\alpha=\sum_{|\alpha|\geq 1}   r^{|\alpha|} B_\alpha ^*h\otimes  e_\alpha.
$$
Hence, we deduce that $A_i=B_i$ for any $i=1,\ldots, n$. Therefore,
(ii) holds.

 Note that, due to Theorem \ref{equivalent2},   we have

$$
\frac{1}{\omega(A,B)^2} P(rB,R)\leq P(rA,R)\leq \omega(A,B)^2 P(
rB,R)$$ and
$$
\frac{1}{\omega(B,C)^2} P(rC,R)\leq P(rB,R)\leq \omega(B,C)^2 P(
rC,R)$$
 for any $r\in [0,1)$.
 Consequently, we deduce that
 $$
\frac{1}{\omega(A,B)^2\omega(B,C)^2} P(rC,R)\leq P(rA,R)\leq
\omega(A,B)^2 \omega(B,C)^2 P(rC,R)$$ for any $r\in [0,1)$. Applying
again Theorem \ref{equivalent2}, we have $\omega(A,C)\leq
\omega(A,B) \omega(B,C)$. This completes the proof.
\end{proof}

Now, we can introduce  a  hyperbolic ({\it Poincar\'e-Bergman} type)
metric
  on the Harnack parts of $[B(\cH)^n]_1^-$.

\begin{proposition}\label{delta}
Let $\Delta$ be a Harnack part of $[B(\cH)^n]_1^-$ and define
$\delta:\Delta\times \Delta \to \RR^+$ by setting
\begin{equation}
\label{hyperbolic}
 \delta(A,B):=\ln \omega(A,B),\quad A,B\in \Delta.
\end{equation}
Then $\delta$ is a metric on  $\Delta$.
\end{proposition}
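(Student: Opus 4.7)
The proof is essentially an immediate consequence of Lemma \ref{omega}: the four defining properties of a metric correspond to the four items of that lemma after applying the logarithm. Concretely, I would verify non-negativity, non-degeneracy, symmetry, and the triangle inequality in turn.

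First, from Lemma \ref{omega}(i) we have $\omega(A,B)\geq 1$, so $\delta(A,B)=\ln \omega(A,B)\geq 0$; in particular $\delta$ is real-valued on $\Delta\times \Delta$. Second, Lemma \ref{omega}(ii) gives $\omega(A,B)=1$ if and only if $A=B$, and since $\ln$ is strictly increasing with $\ln 1=0$, this is equivalent to $\delta(A,B)=0$ if and only if $A=B$. Third, symmetry $\delta(A,B)=\delta(B,A)$ is immediate from Lemma \ref{omega}(iii).

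The triangle inequality is the only point that uses more than a one-line translation: for $A,B,C\in \Delta$, Lemma \ref{omega}(iv) yields
\begin{equation*}
\omega(A,C)\leq \omega(A,B)\,\omega(B,C).
\end{equation*}
Taking the logarithm of both sides and using its monotonicity together with the identity $\ln(xy)=\ln x+\ln y$, we obtain
\begin{equation*}
\delta(A,C)=\ln \omega(A,C)\leq \ln \omega(A,B)+\ln \omega(B,C)=\delta(A,B)+\delta(B,C).
\end{equation*}

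There is no serious obstacle here; the substantive content has already been absorbed into Lemma \ref{omega}, whose proof relied on the characterizations of Harnack equivalence via the pluriharmonic Poisson kernel in Theorem \ref{equivalent2}. The only thing worth double-checking is that the infimum in \eqref{om} defining $\omega(A,B)$ is actually attained (or at least finite) whenever $A\overset{H}{\sim}\,B$, so that $\delta$ takes values in $[0,\infty)$ and not in $[0,\infty]$; this is immediate from the definition of the Harnack equivalence relation, since $A\overset{H}{\sim}\,B$ exactly means that some constant $c\geq 1$ witnesses \eqref{<<}.
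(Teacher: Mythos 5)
Your proof is correct and takes essentially the same approach as the paper: the paper's own proof is the one-line remark that the result follows from Lemma \ref{omega}, and your argument is exactly that, with the routine log translations written out (plus the sensible sanity check that $\omega(A,B)$ is finite on a Harnack part, which is immediate from the definition of $\overset{H}{\sim}$).
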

\begin{proof} The result follows from  Lemma \ref{omega}.
\end{proof}

In \cite{Po-automorphism}, we showed that any free holomorphic
automorphism $\Psi$ of the unit ball $[B(\cH)^n]_1$ which fixes the
origin is implemented by a unitary operator on $\CC^n$, i.e., there
is a unitary operator $U$ on $\CC^n$  such that
\begin{equation*}
 \Psi(X_1,\ldots X_n)= \Psi_U(X_1,\ldots
X_n):=[X_1\cdots  X_n]U , \qquad (X_1,\ldots X_n)\in  [B(\cH)^n]_1.
\end{equation*}
The  theory of noncommutative characteristic functions for row
contractions (see \cite{Po-charact}, \cite{Po-varieties}) was  used
to find all the involutive free holomorphic automorphisms of
$[B(\cH)^n]_1$. They  turned out to be of the form
\begin{equation*}
 \Psi_\lambda=- \Theta_\lambda(X_1,\ldots, X_n):={
\lambda}-\Delta_{ \lambda}\left(I_\cK-\sum_{i=1}^n \bar{{
\lambda}}_i X_i\right)^{-1} [X_1\cdots X_n] \Delta_{{\lambda}^*},
\end{equation*}
for some $\lambda=(\lambda_1,\ldots, \lambda_n)\in \BB_n$, where
$\Theta_\lambda$ is the characteristic function  of the row
contraction $\lambda$, and $\Delta_{ \lambda}$,
$\Delta_{{\lambda}^*}$ are certain defect operators.  Moreover,  we
determined the group $Aut([B(\cH)^n]_1)$  of all the free
holomorphic automorphisms of the noncommutative ball $[B(\cH)^n]_1$
and showed that if $\Psi\in Aut([B(\cH)^n]_1)$ and
$\lambda:=\Psi^{-1}(0)$, then there is a unitary operator $U$ on
$\CC^n$ such that
$$
\Psi=\Psi_U\circ \Psi_\lambda.
$$

The following result is essential for the proof of the main result
of this section.

\begin{lemma}
\label{auto} Let   $A:=(A_1,\ldots, A_n)$ and $B:=(B_1,\ldots, B_n)$
be   in $[B(\cH)^n]_1^-$ and let $\Psi\in Aut([B(\cH)^n]_1)$. Then
$A\overset{H}{{\underset{c}\prec}}\, B$ if and only if
$\Psi(A_1,\ldots, A_n)\overset{H}{{\underset{c}\prec}}\,
\Psi(B_1,\ldots B_n)$.
\end{lemma}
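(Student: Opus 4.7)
The plan is to argue via Theorem~\ref{equivalent}(iv), which characterizes $X\overset{H}{{\underset{c}\prec}}\,Y$ by the pointwise inequalities $u(rX_1,\ldots,rX_n)\leq c^2\, u(rY_1,\ldots,rY_n)$ for every positive free pluriharmonic function $u$ with operator-valued coefficients and every $r\in[0,1)$. Since $\Psi^{-1}\in Aut([B(\cH)^n]_1)$ as well, it suffices to prove only the forward implication.

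Fix a positive free pluriharmonic function $u$ on $[B(\cH)^n]_1$ with coefficients in some $B(\cE)$ and fix $r\in[0,1)$. The key construction is to form
$$v(X):=u\bigl(r\Psi(X)\bigr),\qquad X\in[B(\cH)^n]_1.$$
Because $\Psi$ is a free holomorphic self-map of $[B(\cH)^n]_1$, the factor $r$ forces $r\Psi$ to take values in $[B(\cH)^n]_r^-\subset[B(\cH)^n]_1$. Representing $u$ as a noncommutative Poisson transform $u(Y)=(\mathrm{id}\otimes\mu)[P(Y,R)]$ of a c.p.\ map $\mu:\cR_n^*+\cR_n\to B(\cE)$ (Corollary~5.5 of \cite{Po-pluriharmonic}, already invoked in the proof of Theorem~\ref{equivalent}) and substituting the free holomorphic series of $r\Psi(X)$ into the pluriharmonic kernel $P(\cdot,R)$ shows that $v$ is again a positive free pluriharmonic function on $[B(\cH)^n]_1$. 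Assuming $A\overset{H}{{\underset{c}\prec}}\,B$ and applying Theorem~\ref{equivalent}(iv) with $v$ in place of $u$ yields, for every $s\in[0,1)$,
$$u\bigl(r\Psi(sA_1,\ldots,sA_n)\bigr)\leq c^2\,u\bigl(r\Psi(sB_1,\ldots,sB_n)\bigr).$$

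To finish, I would let $s\to 1^-$. Every $\Psi\in Aut([B(\cH)^n]_1)$ factors as $\Psi_U\circ\Psi_\lambda$ with $\lambda\in\BB_n$; the only non-polynomial ingredient is the resolvent $\bigl(I-\sum_i\bar\lambda_i X_i\bigr)^{-1}$. Since $\|\sum_i\bar\lambda_i X_i\|\leq\|\lambda\|_2<1$ on all of $[B(\cH)^n]_1^-$, this resolvent is norm-continuous on the entire closed ball, so $\Psi$ admits a norm-continuous extension to $[B(\cH)^n]_1^-$. Consequently $r\Psi(sA)\to r\Psi(A)$ and $r\Psi(sB)\to r\Psi(B)$ in norm inside the compact set $[B(\cH)^n]_r^-$, on which the free pluriharmonic function $u$ is uniformly continuous. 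Passing to the limit gives $u(r\Psi(A))\leq c^2\,u(r\Psi(B))$ for every admissible $u$ and $r\in[0,1)$, and a final application of Theorem~\ref{equivalent}(iv) to the pair $(\Psi(A),\Psi(B))\in[B(\cH)^n]_1^-$ delivers $\Psi(A)\overset{H}{{\underset{c}\prec}}\,\Psi(B)$.

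The main obstacle I anticipate is that $A,B$ may be boundary points of $[B(\cH)^n]_1^-$, where positive free pluriharmonic functions need not be defined. The dilation parameters $r,s\in[0,1)$ and the device of composing with $r\Psi$ rather than with $\Psi$ directly are exactly what keep every auxiliary quantity inside the open ball, while the continuous extension of $\Psi$ to $[B(\cH)^n]_1^-$ (the step where the specific structure of the automorphism group enters) is what lets us recover $\Psi(A)$ and $\Psi(B)$ in the limit $s\to 1^-$.
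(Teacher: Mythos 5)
Your proof is correct, and it reaches the same invariance result by a genuinely different route than the paper. The paper works at the level of operators on the Fock space and uses item (v) of Theorem~\ref{equivalent}: for a polynomial $p$ with $\text{\rm Re}\,p\geq 0$, it forms $Q(S):=\text{\rm Re}\,p(\Psi(S_1,\ldots,S_n))$, observes $Q(S)\geq 0$ because the Poisson transform at the row contraction $\Psi(S)$ is completely positive, and then hits $Q(S)\in\overline{\cA_n^*+\cA_n}\otimes M_m$ with the c.p.\ map $c^2P_B-P_A$; no limits are required because $P_A,P_B$ are defined for all of $[B(\cH)^n]_1^-$. You instead work at the level of free pluriharmonic \emph{functions} and use item (iv): you transport a positive $u$ to $v=u\circ r\Psi$, apply (iv) at the dilation parameter $s$, and then recover the boundary points $A,B$ by the limit $s\to 1$, using the norm-continuous extension of $\Psi$ to the closed ball. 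Both are valid; the cost of your approach is the extra $s\to 1$ step (and the verification that $\Psi$ extends continuously), while its gain is that the automorphism never needs to be evaluated at the creation operators. One place you should be more explicit: the claim that $v=u\circ r\Psi$ is again a positive free pluriharmonic function rests on the fact that composition of bounded free holomorphic functions is free holomorphic (so that $v=\text{\rm Re}(h\circ r\Psi)$ when $u=\text{\rm Re}\,h$); this is true and available in \cite{Po-holomorphic} and \cite{Po-automorphism}, but your appeal to ``substituting the series into the kernel'' glosses over it. The paper's route avoids invoking that composition theorem for general $u$ by doing the substitution once, at $S$, where $\Psi(S)$ is simply an $n$-tuple in $\cA_n$.
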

\begin{proof}
Let
$$
p(S_1,\ldots, S_n):=\sum_{|\alpha|\leq k} S_\alpha \otimes
M_{(\alpha)},\qquad M_{(\alpha)}\in M_{m},
$$
be a polynomial in $\cA_n\otimes M_{m}$  such that $\text{\rm
Re}\,p(S_1,\ldots, S_n)\geq 0$. Due to the results from
\cite{Po-automorphism}, if $\Psi\in Aut([B(\cH)^n]_1)$, then
$\Psi(S_1,\ldots, S_n)=(\Psi_1(S_1,\ldots, S_n),\ldots,
\Psi_n(S_1,\ldots,S_n))$ is a row contraction with entries
$\Psi_i(S_1,\ldots, S_n)$, $i=1,\ldots, n$, in the noncommutative
disc algebra $\cA_n$. Since the noncommutative Poisson transform at
$\Psi(S_1,\ldots, S_n)$ is a c.p. linear map $P_{\Psi(S_1,\ldots,
S_n)}:C^*(S_1,\ldots, S_n)\to B(\cH)$, we deduce that
\begin{equation*}
\begin{split}
Q(S_1,\ldots, S_n)&:=\text{\rm Re}\,\left[\sum_{|\alpha|\leq k}
\Psi_\alpha(S_1,\ldots, S_n) \otimes M_{(\alpha)}\right]\\
&=(P_{\Psi(S_1,\ldots, S_n)}\otimes \text{\rm id})\left[\text{\rm
Re}\,p(S_1,\ldots, S_n)\right] \geq 0.
\end{split}
\end{equation*}
If  we assume that $A\overset{H}{{\underset{c}\prec}}\, B$, then
Theorem \ref{equivalent} part (v)  implies
$$
Q(A_1,\ldots, A_n)=(P_A\otimes \text{\rm id}))[Q(S_1,\ldots, S_n)]
\leq c^2 (P_B\otimes \text{\rm id}))[Q(S_1,\ldots, S_n)]
 = c^2
Q(B_1,\ldots, B_n),
$$
where $P_A$ and $P_B$ are the noncommutative Poisson transforms at
$A$ and $B$, respectively. Therefore, we have
$$
\text{\rm Re}\, p(\Psi_1(A_1,\ldots, A_n),\ldots, \Psi_n(A_1,\ldots,
A_n))\leq c^2 \text{\rm Re}\, p(\Psi_1(B_1,\ldots, B_n),\ldots,
\Psi_n(B_1,\ldots, B_n)),
$$
which shows that $\Psi(A_1,\ldots,
A_n)\overset{H}{{\underset{c}\prec}}\, \Psi(B_1,\ldots B_n)$.

Conversely, assume that  $\Psi(A_1,\ldots,
A_n)\overset{H}{{\underset{c}\prec}}\, \Psi(B_1,\ldots B_n)$.
Applying the first part of the proof, we deduce that
$$
\Psi^{-1}[\Psi(A_1,\ldots, A_n)]\overset{H}{{\underset{c}\prec}}\,
\Psi^{-1}[\Psi(B_1,\ldots B_n)].
$$
Since $\Psi^{-1}\circ \Psi=\text{\rm id}$ on the closed ball
$[B(\cH)^n]_1^-$, we deduce that
$A\overset{H}{{\underset{c}\prec}}\, B$. The proof is complete.
\end{proof}

Here is the main result of this section.

\pagebreak

\begin{theorem}\label{P-B} Let $\delta :[B(\cH)^n]_1\otimes
[B(\cH)^n]_1\to [0,\infty)$ be the hyperbolic metric. Then the
following statements hold.
\begin{enumerate}
\item[(i)] If    $A, B\in [B(\cH)^n]_1$, then
\begin{equation*}
\delta(A,B)=\ln \max \left\{ \left\|C_{A} C_{B}^{-1} \right\|,
  \left\|C_{B} C_{A}^{-1} \right\|\right\},
\end{equation*}
where $C_X:=( \Delta_X\otimes I)(I-R_X)^{-1}$ and $R_X:=X_1^*\otimes
R_1+\cdots + X_n^*\otimes R_n$ is the reconstruction operator
associated with the right creation operators $R_1,\ldots, R_n$ and
$X:=(X_1,\ldots, X_n)\in [B(\cH)^n]_1$.

\item[(ii)]  For any free
holomorphic automorphism $\Psi$ of the  noncommutative unit ball
$[B(\cH)^n]_1$,
$$\delta(A,B)=\delta(\Psi(A), \Psi(B)),\qquad A,B\in [B(\cH)^n]_1.
$$
\item[(iii)]
$\delta|_{\BB_n\times \BB_n}$ coincides with the Poincar\'e-Bergman
distance on $\BB_n$, i.e.,
$$
\delta(z,w)=\frac{1}{2}\ln
\frac{1+\|\psi_z(w)\|_2}{1-\|\psi_z(w)\|_2},\qquad z,w\in \BB_n,
$$
where $\psi_z$ is the involutive automorphism of $\BB_n$ that
interchanges $0$ and $z$.
\end{enumerate}
\end{theorem}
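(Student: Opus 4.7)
The plan is to derive (i) as an algebraic translation of the Harnack characterizations from Section~1, read off (ii) immediately from the automorphism invariance of Harnack dominance (Lemma~\ref{auto}), and then obtain (iii) by combining (i)--(ii) with a single operator-norm computation at the origin.

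For (i), I invoke Theorem~\ref{equivalent}(iii) to recast $A\overset{H}{\underset{c}\prec} B$ as the inequality $P(rA,R)\leq c^2 P(rB,R)$ for every $r\in[0,1)$. Since $A,B\in[B(\cH)^n]_1$ we have $r(A_1,\dots,A_n),r(B_1,\dots,B_n)<1$, so the factorization established in the proof of Theorem~\ref{A<0},
$$
P(X,R)=(I-R_X^*)^{-1}\bigl[(I-X_1X_1^*-\cdots-X_nX_n^*)\otimes I\bigr](I-R_X)^{-1}=C_X^*C_X,
$$
is valid at $r=1$ and shows that $C_A,C_B$ are bounded and invertible. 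Operator-norm continuity of $X\mapsto P(X,R)$ lets me pass to the limit $r\to 1$, giving $C_A^*C_A\leq c^2 C_B^*C_B$; since $C_B$ is invertible this is equivalent to $\|C_AC_B^{-1}\|\leq c$. Symmetrising yields $\omega(A,B)=\max\{\|C_AC_B^{-1}\|,\|C_BC_A^{-1}\|\}$, whence the formula follows upon taking $\ln$. Part (ii) then follows instantly from Lemma~\ref{auto}: for every $\Psi\in Aut([B(\cH)^n]_1)$ one has $A\overset{H}{\underset{c}\sim} B$ iff $\Psi(A)\overset{H}{\underset{c}\sim}\Psi(B)$, so the infimum defining $\omega$ is preserved and hence so is $\delta=\ln\omega$.

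For (iii), I first invoke (ii) with the involutive free holomorphic automorphism $\Psi_z$ of \cite{Po-automorphism}, whose restriction to $\BB_n$ coincides with the classical Bergman involution $\psi_z$ interchanging $0$ and $z$, to reduce to $\delta(z,w)=\delta(0,\psi_z(w))$. By (i), with $C_0=I$,
$$
\delta(0,u)=\ln\max\bigl\{\|C_u\|,\|C_u^{-1}\|\bigr\},\qquad u\in\BB_n,
$$
where $C_u=\sqrt{1-\|u\|_2^2}\,(I-R_u)^{-1}$ and $R_u=\sum_{i=1}^n\bar u_i R_i$ acts on $F^2(H_n)$. The identity $R_u^*R_u=\|u\|_2^2\,I$ writes $R_u=\|u\|_2 V$ with $V$ an isometry, and since $\bigcap_k V^k F^2(H_n)=\{0\}$ the Wold decomposition shows $V$ is pure, hence unitarily equivalent to a unilateral shift of infinite multiplicity. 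The standard computation for such $V$ gives $\|(I-R_u)^{-1}\|=(1-\|u\|_2)^{-1}$ and $\|I-R_u\|=1+\|u\|_2$, so $\|C_u\|=\|C_u^{-1}\|=\sqrt{(1+\|u\|_2)/(1-\|u\|_2)}$ and $\delta(0,u)=\tfrac12\ln\tfrac{1+\|u\|_2}{1-\|u\|_2}$, from which the Poincar\'e--Bergman formula follows.

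The main obstacle is the pair of norm computations in part (iii). A more conceptual alternative, in the spirit of the proof of Theorem~\ref{Harnack}, dilates the strict contraction $R_u$ to a bilateral shift and identifies $P(u,R)$ as a compression of $\mathbf{M}(\|u\|_2)\otimes I$, whose spectrum lies in $[(1-\|u\|_2)/(1+\|u\|_2),\,(1+\|u\|_2)/(1-\|u\|_2)]$; either route reduces to checking that these extreme values are actually attained on the compressed space, which is precisely where the pure-isometry structure of $R_u/\|u\|_2$ on the full Fock space is used.
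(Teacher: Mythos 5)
Your overall structure mirrors the paper's proof: recast Harnack dominance via Theorem~\ref{equivalent2}(iii), factor $P(X,R)=C_X^*C_X$, pass to $r\to 1$, invoke Lemma~\ref{auto} for (ii), and specialise to $\BB_n$ for (iii). However, as written, part (i) only establishes one half of the asserted equality. Passing to the limit $r\to1$ in $P(rA,R)\le c^2P(rB,R)$ is a one-directional move: it gives $C_A^*C_A\le c^2C_B^*C_B$, hence, minimising over admissible $c$, the inequality $d:=\max\{\|C_AC_B^{-1}\|,\|C_BC_A^{-1}\|\}\le\omega(A,B)$. To prove $\omega(A,B)\le d$ you must still show that the norm bounds $\|C_AC_B^{-1}\|\le d$ and $\|C_BC_A^{-1}\|\le d$ imply $A\overset{H}{\underset{d}\sim}B$; i.e.\ that the single inequality $P(A,R)\le d^2P(B,R)$ at $r=1$ forces $P(rA,R)\le d^2P(rB,R)$ for every $r\in[0,1)$. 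That is not a continuity statement. The paper closes this loop by rewriting $C_A^*C_A\le d^2C_B^*C_B$ as $\frac{1}{d^2}P(B,S)\le P(A,S)\le d^2P(B,S)$ and then applying the completely positive noncommutative Poisson transform $\text{\rm id}\otimes P_{rR}$, which restores the inequalities at every $r\in[0,1)$ and lets one invoke Theorem~\ref{equivalent2} again. Some such step is required; ``symmetrising'' alone yields only the inequality $\delta(A,B)\ge\ln d$.

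Once (i) is repaired, (ii) and (iii) are fine. In (iii) you take a mildly different route for the norm computation: you note that $R_u/\|u\|_2$ is a pure isometry and use that $f\mapsto f(V)$ is isometric on $H^\infty$ for a pure isometry $V$, to get $\|(I-R_u)^{-1}\|=(1-\|u\|_2)^{-1}$ exactly, hence $\|C_u\|=\|C_u^{-1}\|=\sqrt{(1+\|u\|_2)/(1-\|u\|_2)}$. The paper is more economical: it computes $\|I-R_z\|=1+\|z\|_2$ exactly (via the noncommutative von Neumann inequality), so $\|C_z^{-1}\|$ is known exactly, and then only needs the geometric-series \emph{upper bound} $\|C_z\|\le\sqrt{(1+\|z\|_2)/(1-\|z\|_2)}$, since the maximum is already attained by $\|C_z^{-1}\|$. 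Both routes are valid; yours proves slightly more than is needed.
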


\begin{proof} Let $A,B\in [B(\cH)^n]_1$. Due to Theorem
\ref{foias2}, we have $A\overset{H}{\sim}\, B$.  In order to
determine $\omega(A,B)$,  assume that
$A\overset{H}{{\underset{c}\sim}}\, B$ for some $c\geq 1$. According
to Theorem \ref{equivalent2}, we have
$$
\frac{1}{c^2} P(rB,R)\leq P( rA,R)\leq c^2 P(rB,R)$$ for any $r\in
[0,1)$. Since $\|A\|<1$ and $\|A\|<1$, we can take the limit, as
$r\to 1$, in the operator norm topology, and obtain
\begin{equation}
\label{PPP}
 \frac{1}{c^2} P(B,R)\leq P(A,R)\leq c^2 P(B,R).
 \end{equation}
  We
recall that the free pluriharmonic  kernel $P(X,R)$,
$X:=(X_1,\ldots, X_n)\in [B(\cH)^n]_1$, has the factorization $P(X,
R)=C_X^* C_X$, where $C_X:=(  \Delta_X\otimes I)(I-R_X)^{-1}$.  Note
also that $C_X$ is an invertible operator. It is easy to see that
relation \eqref{PPP} implies
$$
{C_A^*}^{-1} C_B^* C_B C_A^{-1}\leq c^2 I\quad \text{ and } \quad
{C_B^*}^{-1} C_A^*C_AC_B^{-1}\leq c^2I.
$$
Therefore,
$$
d:=\max \left\{ \left\|C_{A} C_{B}^{-1} \right\|,
  \left\|C_{B} C_{A}^{-1} \right\|\right\}\leq c,
  $$
which implies $d\leq \omega(A,B)$. On the other hand, since
$\left\|C_{B} C_{A}^{-1} \right\|\leq d$ and $ \left\|C_{A}
C_{B}^{-1} \right\|\leq d$, we have

$$
{C_A^*}^{-1} C_B^* C_B C_A^{-1}\leq d^2 I\quad \text{ and } \quad
{C_B^*}^{-1} C_A^*C_AC_B^{-1}\leq d^2I.
$$
Hence, we deduce that
$$
\frac{1}{d^2} C_B^*C_B\leq C_A^*C_A\leq d^2C_B^*C_B,
$$
which is equivalent to
$$\frac{1}{d^2} P(B,S)\leq P(A,S)\leq d^2 P(B,S),
$$
where $S:=(S_1,\ldots, S_n)$ is  the $n$-tuple of left creation
operators.  Applying the noncommutative Poisson transform $\text{\rm
id}\otimes P_{rR }$, $r\in [0,1)$, and taking into account that it
is a positive map, we deduce that
$$\frac{1}{d^2} P(rB,R)\leq P(rA,R)\leq d^2 P(rB,R)
$$
for any $r\in [0,1)$. Due to Theorem \ref{equivalent2}, we deduce
that $A\overset{H}{{\underset{d}\sim}}\, B$ and, consequently,
$\omega(A,B)\leq d$. Since the reverse inequality was already
proved, we have $\omega(A,B)=d$, which together with
\eqref{hyperbolic}  prove part (i).

To prove (ii), let  $\Psi\in Aut([B(\cH)^n]_1)$.  If $A,B\in
[B(\cH)^n]_1$, then, due to Theorem \ref{foias2}, we have
$A\overset{H}{\sim}\, B$. Applying Lemma \ref{auto}, the result
follows.

Now, let us prove item (iii). Let $z:=(z_1,\ldots, z_n)\in \BB_n$.
Due to part (i) of this theorem, we have
$$
\delta(z, 0)=\ln \max \left\{ \|C_z\|, \|C_z^{-1}\|\right\},
$$
where  $C_z:=(1-\|z\|_2)^{1/2}\left(I-\sum_{i=1}^n \bar z_i
R_i\right)^{-1}$. First, we show that
\begin{equation}
\label{eq1} \left\|I-\sum_{i=1}^n \bar z_i R_i\right\|=1+\|z\|_2.
\end{equation}
Indeed, since $R_1,\ldots, R_n$ are isometries with orthogonal
ranges, we have
$$
\left\|\sum_{i=1}^n \bar z_i R_i\right\|=\left(\sum_{i=1}^n
|z_i|^2\right)^{1/2}=\|z\|_2.
$$
Consequently,
\begin{equation}
\label{ine1}
 \left\|I-\sum_{i=1}^n \bar z_i R_i\right\|\leq 1+\|z\|_2.
\end{equation}
Note that, due to Riesz representation theorem, we have
\begin{equation}
\label{equ2} \sup_{w=(w_1,\ldots, w_n)\in \overline{\BB}_n}
\left|1+\sum_{i=1} \bar z_iw_i\right|=1+\|z\|_2.
\end{equation}
On the other hand, due to the noncommutative von Neumann inequality
\cite{Po-von}, we have
\begin {equation}
\label{ine2} \left|1+\sum_{i=1} \bar z_iw_i\right|\leq
\left\|I-\sum_{i=1}^n \bar z_i R_i\right\|
\end{equation}
for any $(w_1,\ldots, w_n)\in \overline{\BB}_n$. Combining relations
\eqref{ine1}, \eqref{equ2}, and \eqref{ine2}, we deduce \eqref{eq1}.
Consequently, we  have
\begin{equation}
\label{C-1} \left\| C_z^{-1}\right\|=\left(\frac{1+\|z\|_2
}{1-\|z\|_2 }\right)^{1/2}.
\end{equation}
Note also that
\begin{equation*}
\begin{split}
\left\|\left(I-\sum_{i=1}^n \bar z_i R_i\right)^{-1}\right\|&\leq 1+
\left\|\sum_{i=1}^n \bar z_i R_i\right\|+\left\|\sum_{i=1}^n \bar
z_i R_i\right\|^2+\cdots\\
&=1+\|z\|_2+ \|z\|_2^2+\cdots\\
&= \frac{1}{1-\|z\|_2}.
\end{split}
\end{equation*}
Consequently, we have
\begin{equation}
\label{Cz} \left\| C_z\right\|\leq \left(\frac{1+\|z\|_2 }{1-\|z\|_2
}\right)^{1/2}.
\end{equation}

Due to relations \eqref{C-1} and \eqref{Cz}, we have
\begin{equation}
\label{z,0} \omega(z,0)=\left(\frac{1+\|z\|_2 }{1-\|z\|_2
}\right)^{1/2}.
\end{equation}
Now, we consider the general case. For each $w\in \BB_n$,
 let $\Psi_z$ be the corresponding involutive automorphism of
$[B(\cH)^n]_1$. We recall (see \cite{Po-automorphism}) that
$\Psi_w(0)=w$ and $\Psi_w(w)=0$. Due to part (ii) of this theorem
and relation \eqref{z,0}, we have
\begin{equation*}
\begin{split}
\delta(z,w)&= \delta(\Psi_w(z), \Psi_w(w))\\
&=\delta(\Psi_w(z), 0)=\ln \omega(\Psi_w(z),0)\\
&=\frac{1}{2}\ln \frac{1+\|\Psi_z(w)\|_2}{1-\|\Psi_z(w)\|_2}.
\end{split}
\end{equation*}
Since, according  to \cite{Po-automorphism}, $\Psi_w$ is a
noncommutative extension of the  involutive automorphism of $\BB_n$
that interchanges $0$ and $z$, i.e.,   $\Psi_w(z)=\psi_w(z)$ for
$z\in \BB_n$, item (iii) follows. The proof is  complete.
\end{proof}

\begin{corollary} \label{delta-ine} For any $X,Y\in [B(\cH)^n]_1$,
$$\delta(X,Y)\leq   \ln
\frac{(\|\Delta_X\|\|\Delta_Y\|}{(1-\|X\|)(1-\|Y\|)}.
$$
\end{corollary}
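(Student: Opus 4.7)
The plan is to invoke Theorem \ref{P-B}(i), which gives the identity
$$\delta(X,Y)=\ln\max\bigl\{\|C_XC_Y^{-1}\|,\,\|C_YC_X^{-1}\|\bigr\},$$
and reduce the corollary to the two one-sided operator norm estimates
$$\|C_X\|\leq \frac{\|\Delta_X\|}{1-\|X\|}\qquad\text{and}\qquad \|C_Y^{-1}\|\leq \frac{\|\Delta_Y\|}{1-\|Y\|}.$$
By submultiplicativity these combine to yield the announced bound on $\|C_XC_Y^{-1}\|$, and exchanging the roles of $X$ and $Y$ gives the same bound for $\|C_YC_X^{-1}\|$; taking the logarithm of the maximum then produces the corollary.

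The first estimate is immediate. Because the right creation operators $R_1,\dots,R_n$ are isometries with pairwise orthogonal ranges, the reconstruction operator satisfies $R_X^*R_X=\sum_{i=1}^n X_iX_i^*\otimes I$, so $\|R_X\|=\|X\|<1$ and hence $\|(I-R_X)^{-1}\|\leq 1/(1-\|X\|)$ via a geometric series. Writing $C_X=(\Delta_X\otimes I)(I-R_X)^{-1}$ and using $\|\Delta_X\otimes I\|=\|\Delta_X\|$ then yields the stated bound on $\|C_X\|$.

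The second estimate is the main point and where the only real work lies. Starting from $C_Y^{-1}=(I-R_Y)(\Delta_Y^{-1}\otimes I)$, the elementary bounds $\|I-R_Y\|\leq 1+\|Y\|$ and $\|\Delta_Y^{-1}\|\leq (1-\|Y\|^2)^{-1/2}$ (the latter coming from $\Delta_Y^2\geq(1-\|Y\|^2)I$) combine to give
$$\|C_Y^{-1}\|\leq \sqrt{\frac{1+\|Y\|}{1-\|Y\|}}.$$
To upgrade this to the asserted bound I would use the spectral observation that, since $0\leq\sum_i Y_iY_i^*\leq \|Y\|^2 I$,
$$\|\Delta_Y\|^2=\left\|I-\sum_{i=1}^n Y_iY_i^*\right\|=1-\min\sigma\Bigl(\sum_{i=1}^n Y_iY_i^*\Bigr)\geq 1-\|Y\|^2.$$
Thus $\|\Delta_Y\|\geq\sqrt{1-\|Y\|^2}$, equivalently $\sqrt{(1+\|Y\|)/(1-\|Y\|)}\leq \|\Delta_Y\|/(1-\|Y\|)$, which finishes the bound on $\|C_Y^{-1}\|$. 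This last spectral comparison is the only subtle ingredient (it explains why $\|\Delta_Y\|$, rather than the smaller $\sqrt{1-\|Y\|^2}$, appears in the numerator of the corollary); everything else reduces to routine submultiplicativity and tensor norm manipulations.
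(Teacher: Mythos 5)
Your proposal is correct and takes essentially the same approach as the paper. The paper passes through the triangle inequality $\delta(X,Y)\leq\delta(X,0)+\delta(0,Y)$ and uses $\delta(X,0)=\ln\max\{\|C_X\|,\|C_X^{-1}\|\}$ together with the two norm estimates $\|C_X\|\leq\|\Delta_X\|/(1-\|X\|)$ and $\|C_X^{-1}\|\leq\sqrt{(1+\|X\|)/(1-\|X\|)}\leq\|\Delta_X\|/(1-\|X\|)$ (the last step using $\|I-XX^*\|\geq 1-\|XX^*\|$, which is your spectral observation $\|\Delta_X\|\geq\sqrt{1-\|X\|^2}$); your argument applies submultiplicativity directly to the explicit formula $\delta(X,Y)=\ln\max\{\|C_XC_Y^{-1}\|,\|C_YC_X^{-1}\|\}$ from Theorem \ref{P-B}(i), which is the same computation in slightly different packaging since the triangle inequality for $\delta$ is itself a consequence of that submultiplicativity.
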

\begin{proof}
According  to Theorem \ref{foias2} and  Proposition \ref{delta},
$[B(\cH)^n]_1$ is  the Harnack part of $0$ and  $\delta$ is a metric
on the open ball $[B(\cH)^n]_1$. Therefore $\delta(X,Y)\leq
\delta(X,0)+\delta(0,Y)$.
 Theorem \ref{P-B} part (i) implies
$$
\delta(X,0)=\ln\max\{\|C_X\|, \|C_X^{-1}\|\},
$$
where $C_X:= (I\otimes \Delta_X)(I-R_X)^{-1}$ and $R_X:=X_1^*\otimes
R_1+\cdots + X_n^*\otimes R_n$. Since $R_1,\ldots, R_n$ are
isometries with orthogonal ranges, we have
\begin{equation*}
\|I-R_X\|\leq 1+\|R_X\|=1+\|X\|
\end{equation*}
and
\begin{equation*}
\begin{split}
\left\| (I-R_X)^{-1}\right\|&\leq 1+\|R_X\|+\|R_X\|^2+\cdots\\
&=1+\|X\|+\|X\|^2+\cdots\\
&=\frac{1}{1-\|X\|}.
\end{split}
\end{equation*}
On the other hand, since $\|X\|<1$, we have
\begin{equation*}
\begin{split}
\|\Delta_X^{-1}\|^2 &\leq 1+\|XX^*\|+\|XX^*\|^2+\cdots\\
&=\frac{1}{1-\|X\|^2}.
\end{split}
\end{equation*}
Now, one can easily see that
\begin{equation*}
\begin{split}
\|C_X^{-1}\|&=\|(I-R_X)(I\otimes \Delta_X^{-1})\|\\
&\leq (1+\|X\|)\|\Delta_X^{-1}\|\\
&\leq \left(\frac{1+\|X\|}{1-\|X\|}\right)^{1/2}
\end{split}
\end{equation*}
and
$$
\|C_X\|\leq \frac{\|\Delta_X\|}{1-\|X\|}.
$$
Note also that,  due to the fact that $\|I-XX^*\|\geq 1-\|XX^*\|$,
we have
$$
\frac{\|\Delta_X\|}{1-\|X\|}\geq
\left(\frac{1+\|X\|}{1-\|X\|}\right)^{1/2}.
$$
Therefore
$$
\delta(X,0)=\ln \max\{\|C_X\|, \|C_X^{-1}\|\}\leq \ln
\frac{\|\Delta_X\|}{1-\|X\|}.
$$
Taking into account that  $\delta(X,Y)\leq \delta(X,0)+\delta(0,Y)$,
the result follows. The proof is complete.
\end{proof}

In what follows we prove that the    hyperbolic  metric $\delta$, on
the Harnack parts of  $[B(\cH)^n]_1^-$, is invariant under the
automorphism group $Aut([B(\cH)^n]_1)$, and can be  written in terms
of the  the reconstruction operator.

First, we need the following result.
\begin{lemma}
\label{OMr}
  Let   $A:=(A_1,\ldots, A_n)$ and $B:=(B_1,\ldots,
B_n)$ be   in $[B(\cH)^n]_1^-$. Then the following properties hold.
\begin{enumerate}
\item[(i)] $A\overset{H}{\sim}\, B$ if and only if $rA\overset{H}{\sim}\,
rB$ for any $r\in [0,1)$ and  \  $\sup_{r\in [0,
1)}\omega(rA,rB)<\infty$. In this case,
$$
\omega(A,B)=\sup_{r\in [0, 1)}\omega(rA,rB)\quad \text{ and } \quad
\delta(A,B)=\sup_{r\in [0, 1)}\delta(rA,rB).
$$
\item[(ii)]  If  $A\overset{H}{\sim}\, B$, then the functions $r\mapsto \omega(rA,rB)$ and $r\mapsto
\delta(rA,rB)$ are increasing on $[0,1)$.
\end{enumerate}
\end{lemma}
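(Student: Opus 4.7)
The plan is to reduce everything to characterization (iii) in Theorem \ref{equivalent2}, which says that $A\overset{H}{\underset{c}{\sim}}\,B$ is equivalent to the inequality
\[
\tfrac{1}{c^2}P(tB,R)\leq P(tA,R)\leq c^2 P(tB,R),\qquad t\in[0,1).
\]
Notice that replacing $(A,B)$ by $(rA,rB)$ with $r\in[0,1)$ simply rescales the parameter: $rA\overset{H}{\underset{c}{\sim}}\,rB$ is equivalent to the above inequality holding with $tr$ in place of $t$, i.e.\ for all $t\in[0,1)$ with the evaluation point $tr$. Since $\{tr:t\in[0,1)\}\subseteq[0,1)$, every constant $c$ that works for $(A,B)$ also works for $(rA,rB)$; hence $rA\overset{H}{\sim}\,rB$ whenever $A\overset{H}{\sim}\,B$, and
\[
\omega(rA,rB)\leq \omega(A,B),\qquad r\in[0,1).
\]
This proves one direction of (i) and gives the bound $\sup_{r}\omega(rA,rB)\leq\omega(A,B)$.

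For the converse direction of (i), assume that $rA\overset{H}{\sim}\,rB$ for every $r\in[0,1)$ and that $M:=\sup_{r\in[0,1)}\omega(rA,rB)<\infty$. Fix any $c>M$. Then for each $r_0\in[0,1)$ we have $r_0 A\overset{H}{\underset{c}{\sim}}\,r_0 B$, which by Theorem \ref{equivalent2}(iii) gives
\[
\tfrac{1}{c^2}P(sr_0 B,R)\leq P(sr_0 A,R)\leq c^2 P(sr_0 B,R),\qquad s\in[0,1).
\]
Given an arbitrary $t\in[0,1)$, choose $r_0\in(t,1)$ and set $s:=t/r_0\in[0,1)$; then $sr_0=t$, and the display above is exactly the inequality required by Theorem \ref{equivalent2}(iii) for the pair $(A,B)$ at scale $t$. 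Hence $A\overset{H}{\underset{c}{\sim}}\,B$, and letting $c\downarrow M$ gives $\omega(A,B)\leq M$. Combining with the earlier bound yields $\omega(A,B)=\sup_{r}\omega(rA,rB)$, and the corresponding identity for $\delta=\ln\omega$ is immediate.

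Part (ii) uses the same reparametrization idea. Take $0\leq r_1<r_2<1$ and any $c>\omega(r_2 A,r_2 B)$, so that $r_2 A\overset{H}{\underset{c}{\sim}}\,r_2 B$; that is,
\[
\tfrac{1}{c^2}P(sr_2 B,R)\leq P(sr_2 A,R)\leq c^2 P(sr_2 B,R),\qquad s\in[0,1).
\]
To verify $r_1 A\overset{H}{\underset{c}{\sim}}\,r_1 B$, pick any $t\in[0,1)$ and set $s:=tr_1/r_2\in[0,1)$; then $sr_2=tr_1$ and the display gives exactly the desired inequality at scale $t$ for $(r_1 A,r_1 B)$. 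Hence $\omega(r_1 A,r_1 B)\leq c$, and taking $c\downarrow \omega(r_2 A,r_2 B)$ gives monotonicity of $r\mapsto\omega(rA,rB)$, and therefore also of $r\mapsto\delta(rA,rB)$.

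The argument is almost entirely a substitution in the parameter $t$ of Theorem \ref{equivalent2}(iii); the only point that needs a small bit of care is the converse in (i), where one must choose $r_0$ strictly larger than the prescribed $t$ in order to reach every $t\in[0,1)$ and simultaneously keep $s=t/r_0$ in $[0,1)$. There is no serious analytic obstacle—no limit in $r\to 1$ needs to be taken, because the characterization in Theorem \ref{equivalent2}(iii) is already stated as a family of pointwise inequalities over $r\in[0,1)$.
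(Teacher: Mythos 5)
Your proof is correct and follows essentially the same route as the paper's: everything is reduced to the free pluriharmonic kernel characterization of Theorem~\ref{equivalent2}(iii), and the key observation in all three steps is that rescaling the tuple by $r$ corresponds to rescaling the evaluation parameter inside $[0,1)$. Your part (ii) reproves monotonicity directly from the kernel inequality rather than citing the just-established supremum formula from part (i) as the paper does, but the underlying substitution argument is identical.
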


\begin{proof}
Assume that $A\overset{H}{\sim}\, B$ and let $a:=\omega(A,B)$. Due
to relation \eqref{<<}, it is clear that
$A\,\overset{H}{{\underset{a}\sim}}\, B$. By Theorem
\ref{equivalent2}, we deduce that
$$
\frac{1}{a^2} P(rB,R)\leq P(rA,R)\leq a^2 P(rB,R)$$ for any $r\in
[0,1)$, which shows that $rA\,\overset{H}{{\underset{a}\sim}}\, rB$
for any $r\in [0,1)$, and $\sup_{r\in [0, 1)}\omega(rA,rB)\leq a.$

Conversely, assume that $\sup_{r\in [0, 1)}\omega(rA,rB)<\infty$.
Denote $c_r:=\omega(rA,rB)$ for $r\in [0,1)$, and let
$c_1:=\sup_{r\in [0,1)} c_r<\infty$. Since
$rA\,\overset{H}{{\underset{c_r}\sim}}\, rB$ and $1\leq c_r\leq
c_1$, we have
$$
\frac{1}{c_1^2} P(trB,R)\leq \frac{1}{c_r^2} P(trB,R)\leq P(
trA,R)\leq c_r^2 P( trB,R)\leq c_1^2 P( trB,R)$$ for any $t,r\in
[0,1)$. Due to Theorem \ref{equivalent2}   we deduce  that
$A\,\overset{H}{{\underset{c_1}\sim}}\, B$. Thus $\omega(A,B)\leq
c_1$. Since the reverse inequality was already proved above, we have
$a=c_1$. The second part of item (i) is now obvious.

To prove (ii), let $s,t\in [0,1)$ be such that  $s<t$. Applying part
(i), we have
$$
\omega(sA,sB)\leq \sup_{r\in[0,1)} \omega(rtA, rtB)\leq
\omega(tA,tB).
$$
Hence, we deduce   item (ii). The proof is complete.
\end{proof}

\begin{theorem}\label{formula}
Let   $A:=(A_1,\ldots, A_n)$ and $B:=(B_1,\ldots, B_n)$ be   in
$[B(\cH)^n]_1^-$ such that $A\overset{H}{\sim}\, B$. Then
\begin{enumerate}
\item[(i)]
$\delta(A,B)=\delta(\Psi(A), \Psi(B)) $  for any $\Psi\in
Aut([B(\cH)^n]_1)$;
\item[(ii)] the metric $\delta$ satisfies the relation
\begin{equation*}
\begin{split}
\delta(A,B)=\ln \max \left\{\sup_{r\in [0,1)}\left\|C_{rA}
C_{rB}^{-1} \right\|, \sup_{r\in [0,1)} \left\|C_{rB} C_{rA}^{-1}
\right\|\right\},
\end{split}
\end{equation*}
 where $C_X:=( \Delta_X\otimes I)(I-R_X)^{-1}$ and
$R_X:=X_1^*\otimes R_1+\cdots + X_n^*\otimes R_n$ is the
reconstruction operator  associated with the right creation
operators $R_1,\ldots, R_n$ and $X:=(X_1,\ldots, X_n)\in
[B(\cH)^n]_1$.
\end{enumerate}
\end{theorem}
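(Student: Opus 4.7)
The plan is to dispatch both parts by reducing them to results already established earlier, without any substantial new work.

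For part (i), I would invoke Lemma \ref{auto} directly. It asserts that for any $c \geq 1$ and any $\Psi \in Aut([B(\cH)^n]_1)$, the relation $A \overset{H}{{\underset{c}\prec}} B$ is equivalent to $\Psi(A) \overset{H}{{\underset{c}\prec}} \Psi(B)$. Symmetrizing, the Harnack equivalence $\overset{H}{{\underset{c}\sim}}$ with constant $c$ is likewise preserved in both directions by $\Psi$. Taking the infimum over admissible $c > 1$ in \eqref{om} then yields $\omega(A,B) = \omega(\Psi(A), \Psi(B))$, and applying $\ln$ gives the invariance of $\delta$.

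For part (ii), the strategy is to combine the explicit open-ball formula from Theorem \ref{P-B}(i) with the monotone scaling principle of Lemma \ref{OMr}(i). The first observation is that for every $r \in [0,1)$ the scaled tuples $rA$ and $rB$ satisfy
\[
\|(rA_1)(rA_1)^* + \cdots + (rA_n)(rA_n)^*\|^{1/2} = r\,\|A_1 A_1^* + \cdots + A_n A_n^*\|^{1/2} \leq r < 1,
\]
so $rA, rB \in [B(\cH)^n]_1$. Theorem \ref{P-B}(i) therefore applies to the pair $(rA, rB)$ and gives
\[
\delta(rA, rB) = \ln \max\bigl\{\|C_{rA} C_{rB}^{-1}\|,\ \|C_{rB} C_{rA}^{-1}\|\bigr\}.
\]
Since we assume $A \overset{H}{\sim} B$, Lemma \ref{OMr}(i) supplies $\delta(A,B) = \sup_{r \in [0,1)} \delta(rA, rB)$. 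Using monotonicity of $\ln$ together with the elementary identity $\sup_r \max\{f(r), g(r)\} = \max\{\sup_r f(r), \sup_r g(r)\}$, the supremum may be pulled inside the logarithm and the maximum, producing the claimed expression.

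I do not anticipate any serious obstacle. The only delicate point is bridging the gap between the open ball $[B(\cH)^n]_1$, where Theorem \ref{P-B}(i) is available, and the closed ball $[B(\cH)^n]_1^-$, where $A$ and $B$ are allowed to sit; this is exactly the role of Lemma \ref{OMr}(i), which makes the reduction purely mechanical. The invariance claim in (i) is an immediate consequence of Lemma \ref{auto}, so no creativity is required there either.
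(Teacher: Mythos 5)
Your proof is correct and matches the approach the paper intends; the paper's own justification is the one-line citation ``Due to Lemma \ref{OMr} and Theorem \ref{P-B}, the result follows,'' which unpacks to exactly the reduction you give for part (ii). For part (i) you invoke Lemma \ref{auto} directly on the closed ball and pass to the infimum in $c$, which is slightly cleaner than routing through Theorem \ref{P-B}(ii) (stated only for the open ball) and is in fact the same mechanism, since the proof of Theorem \ref{P-B}(ii) is itself an application of Lemma \ref{auto}.
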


\begin{proof} Due to  Lemma \ref{OMr} and Theorem \ref{P-B}, the
result follows.
\end{proof}

\bigskip

\section{Metric topologies on  Harnack parts of $[B(\cH)^n]_1^-$}

In this section we study the relations between the
$\delta$-topology, the $d_H$-topology (which will be introduced),
and the operator norm topology on  Harnack parts of
$[B(\cH)^n]_1^-$. We prove that the hyperbolic metric  $\delta$ is a
complete metric on certain  Harnack parts, and that all the
topologies above coincide on the open ball $[B(\cH)^n]_1$.

First, we need some notation. Denote
$$
[B_0(\cH)^n]_1^-:=\left\{ (X_1,\ldots, X_n)\in [B(\cH)^n]_1^-:\
r(X_1,\ldots, X_n)<1\right\},
$$
where $r(X_1,\ldots, X_n)$ is the joint spectral radius of
$(X_1,\ldots, X_n)$.
 Note that, due to Theorem \ref{A<0}  and  Theorem \ref{foias2}, we have
$$[B(\cH)^n]_1\subset[B_0(\cH)^n]_1^-=\left\{X\in [B(\cH)^n]_1^-:\
X\overset{H}{{ \prec}}\, 0\right\}.
$$
If $A,B$ are in $ [B_0(\cH)^n]_1^-$, then $A\overset{H}{{ \prec}}\,
0$ and $B\overset{H}{{ \prec}}\, 0$. Consequently, there exists
$c\geq 1$ such that, for any $f\in \cA_n\otimes B(\cE)$ with $
\text{\rm Re}\, f\geq 0$, where $\cE$ is a separable infinite
dimensional Hilbert space,  we have
$$
\text{\rm Re}\, f(A)\leq c^2\text{\rm Re}\, f(0)\quad \text{ and }
\quad \text{\rm Re}\, f(B)\leq c^2\text{\rm Re}\, f(0).
$$
Hence, we deduce that $$\|\text{\rm Re}\, f(A)-\text{\rm Re}\,
f(B)\|\leq 2c^2\,\|\text{\rm Re}\, f(0)\|.$$
 Therefore,  it makes sense
to define the map $d_H:[B_0(\cH)^n]_1^-\times [B_0(\cH)^n]_1^-\to
[0,\infty)$ by setting
$$d_H(A,B):=\sup\left\{ \|u(A)-u(B)\|:\  u\in \text{\rm
Re}\,\left(\cA_n\otimes  B(\cE)\right),  \  u(0)=I,\ u\geq
0\right\}.
$$

\begin{proposition}\label{aprox1}
For any $A,B\in [B_0(\cH)^n]_1^-$,
$$d_H(A,B)=\sup \|\text{\rm Re}\, p(A)-\text{\rm Re}\, p(B)\|,
$$
where the supremum is taken over all polynomials $p\in
\CC[X_1,\ldots, X_n]\otimes M_m$, $m\in \NN$, with $p(0)=I$ and
$\text{\rm Re}\, p\geq 0$.
\end{proposition}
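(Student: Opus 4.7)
The plan is to establish both inequalities between $d_H(A,B)$ and the supremum on the right, which I denote by $\Sigma$.

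The inequality $d_H(A,B)\geq \Sigma$ is easy. Given a matrix polynomial $p\in \CC[X_1,\ldots,X_n]\otimes M_m$ with $p(0)=I_m$ and $\text{\rm Re}\,p\geq 0$, fix an isometry $V:\CC^m\hookrightarrow \cE$ and set $g(X):=(I\otimes V)p(X)(I\otimes V^*)+I\otimes(I_\cE-VV^*)$. Then $g\in\cA_n\otimes B(\cE)$ with $g(0)=I_\cE$, and $u:=\text{\rm Re}\,g$ satisfies $u(0)=I_\cE$ and $u\geq 0$. The added constant cancels in the difference, so $\|u(A)-u(B)\|=\|\text{\rm Re}\,p(A)-\text{\rm Re}\,p(B)\|$; taking the supremum over such $p$ gives $d_H(A,B)\geq \Sigma$.

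For the reverse inequality $d_H(A,B)\leq \Sigma$, I use a two-stage approximation. Write $u=\text{\rm Re}\,f$ with $f\in \cA_n\otimes B(\cE)$; after subtracting $i\,\text{\rm Im}\,f(0)$ we may assume $f(0)=I_\cE$. For a finite-dimensional subspace $\cF\subset \cE$ with orthogonal projection $P_\cF$, compress to obtain
$$f_\cF(X):=(I\otimes P_\cF)f(X)(I\otimes P_\cF)\in \cA_n\otimes B(\cF)\cong \cA_n\otimes M_{\dim \cF},$$
so $f_\cF(0)=I_\cF$ and $u_\cF:=\text{\rm Re}\,f_\cF=(I\otimes P_\cF)u(\cdot)(I\otimes P_\cF)\geq 0$. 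Next apply the Ces\`aro--Fej\'er means
$$p_N^\cF(X):=\int_\TT f_\cF(e^{i\theta}X)F_N(\theta)\,\frac{d\theta}{2\pi}=\sum_{|\alpha|\leq N}\left(1-\frac{|\alpha|}{N+1}\right)X_\alpha\otimes G_{(\alpha)}^\cF,$$
with $F_N$ the Fej\'er kernel. Each $p_N^\cF$ lies in $\CC[X_1,\ldots,X_n]\otimes M_{\dim \cF}$ with $p_N^\cF(0)=I$; because $F_N\geq 0$ and $\text{\rm Re}\,f_\cF(e^{i\theta}X)\geq 0$ for every $\theta$ and every $X\in [B(\cH)^n]_1$, we obtain $\text{\rm Re}\,p_N^\cF\geq 0$. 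The integral representation also shows that $C_N$ is contractive on $\cA_n\otimes M_{\dim \cF}$ and reduces to the identity on polynomials of degree less than $N$, hence $p_N^\cF\to f_\cF$ in operator norm. Combined with continuity of evaluation at $A$ and $B$ (the noncommutative von Neumann inequality), this yields $\text{\rm Re}\,p_N^\cF(A)\to u_\cF(A)$ and similarly at $B$, so $\|u_\cF(A)-u_\cF(B)\|\leq \Sigma$ for every finite-dimensional $\cF\subset \cE$.

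To close the argument, note that $u_\cF(A)-u_\cF(B)=(I\otimes P_\cF)(u(A)-u(B))(I\otimes P_\cF)$. Choosing an increasing sequence $\cF_k\subset \cE$ with $\bigcup_k\cF_k$ dense, one has $\sup_k\|(I\otimes P_{\cF_k})T(I\otimes P_{\cF_k})\|=\|T\|$ for any $T\in B(\cH\otimes \cE)$, since the sup in the definition of $\|T\|$ is attained on test vectors lying in $\cH\otimes (\bigcup_k\cF_k)$, each of which sits inside some $\cH\otimes \cF_k$. Applied to $T:=u(A)-u(B)$ this gives $\|u(A)-u(B)\|\leq \Sigma$; taking supremum over admissible $u$ yields $d_H(A,B)\leq \Sigma$. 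The main technical obstacle is the Ces\`aro step: one must ensure the Fej\'er averaging both preserves the positivity of $\text{\rm Re}\,f_\cF$ and converges in the operator norm of $\cA_n\otimes M_{\dim \cF}$. Positivity follows from $F_N\geq 0$ together with the invariance of pluriharmonic positivity under rotation of the variable, while norm convergence rests on the noncommutative Fej\'er theorem for $\cA_n$ (contractivity of $C_N$ via $\int F_N=1$ plus density of polynomials), which is standard in the theory developed in \cite{Po-poisson, Po-analytic}.
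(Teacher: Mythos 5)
Your proof is correct in substance and reaches the result, but it takes a genuinely different route from the paper. The paper approximates $f$ by polynomials via radial dilation and truncation ($f\mapsto f_r\mapsto p_{r,N}$) and then restores positivity of the real part by perturbing, replacing $p_{r,N}$ with $\frac{1}{1+\epsilon}(p_{r,N}+\epsilon I)$; only afterward does it compress to finite-dimensional $\cE_m$. You instead compress first and then use Ces\`aro--Fej\'er averaging $p_N^\cF=\int_\TT f_\cF(e^{i\theta}\cdot)F_N(\theta)\,\frac{d\theta}{2\pi}$, which preserves both $\text{\rm Re}\ge 0$ and the normalization $p(0)=I$ exactly, with no $\epsilon$-shift needed. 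That is a clean improvement; the cost is that you must know (and cite) a noncommutative Fej\'er theorem for $\cA_n\otimes M_m$. You also spell out the easy inequality $d_H(A,B)\geq\Sigma$, which the paper leaves implicit.

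One small but real inaccuracy: the Ces\`aro operator $C_N$ does \emph{not} reduce to the identity on polynomials of degree less than $N$; it multiplies the degree-$k$ coefficient by $1-\tfrac{k}{N+1}$, which is strictly less than $1$ for $k\geq 1$. The convergence $p_N^\cF\to f_\cF$ is nonetheless correct, but the justification should be the standard Fej\'er argument: $C_N$ is contractive (since $F_N\geq 0$ and $\int F_N\,\frac{d\theta}{2\pi}=1$), and for a fixed polynomial $p$ of degree $d$ one has $\|C_N p-p\|\leq \frac{1}{N+1}\sum_{k=1}^d k\,\|p^{(k)}\|\to 0$, so a three-$\epsilon$ estimate gives $\|C_N g-g\|\to 0$ for every $g\in\cA_n\otimes M_m$. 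Please replace the incorrect ``reduces to the identity'' phrase with this estimate.
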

\begin{proof}
Let $f\in \cA_n\otimes B(\cE)$ be such that $\text{\rm Re}\,f\geq 0$
and $(\text{\rm Re}\, f)(0)=I$. According to \cite{Po-analytic}, $f$
has a unique  formal Fourier representation
$$
f=\sum_{\alpha\in \FF_n^+} S_\alpha \otimes C_{(\alpha)},\quad
C_{(\alpha)}\in B(\cE).
$$
Moreover, $\lim_{r\to 1} f_r= f$ in the operator norm topology,
where $ f_r=\sum_{k=1}^\infty \sum_{|\alpha|=k} r^{|\alpha|}S_\alpha
\otimes C_{(\alpha)} $ is in $\cA_n\otimes B(\cE)$ and the series is
convergent in the operator norm. Consequently, for any $\epsilon
>0$, there exist $r_\epsilon\in [0,1)$ and $N_\epsilon\in \NN$ such
that
\begin{equation}
\label{PNr} \|p_{r_\epsilon,N_\epsilon}-f\|<\frac{\epsilon}{2},
\end{equation}
where $p_{r_\epsilon,N_\epsilon}:=\sum_{k=0}^{N_\epsilon}
\sum_{|\alpha|=k} r^{|\alpha|}S_\alpha \otimes C_{(\alpha)}$. Define
the polynomial
$q_{\epsilon,r_\epsilon,N_\epsilon}:=\frac{1}{1+\epsilon}\left(
p_{r_\epsilon, N_\epsilon}+\epsilon I\right) $ and note that
$\left(\text{\rm
Re}\,q_{\epsilon,r_\epsilon,N_\epsilon}\right)(0)=I$. On the other
hand, due to \eqref{PNr}, we have $\|\text{\rm
Re}\,p_{r_\epsilon,N_\epsilon}-\text{\rm
Re}\,f\|<\frac{\epsilon}{2}$, which, due to the fact that $\text{\rm
Re}\,f\geq 0$,  implies $\text{\rm
Re}\,q_{\epsilon,r_\epsilon,N_\epsilon}\geq 0$. Now, notice that
$$
\|q_{\epsilon,r_\epsilon,N_\epsilon}-f\|\leq \frac{1}{1+\epsilon}
\|p_{r_\epsilon,N_\epsilon}-f\|+\frac{\epsilon}{1+\epsilon} \|I+f\|,
$$
which together with relation \eqref{PNr} show that $f$ can be
approximated, in the operator norm,  with polynomials
$q=\sum_{k=0}^N S_\alpha \otimes C_{(\alpha)}$, $C_{(\alpha)}\in
B(\cE)$, such that $\text{\rm Re}\, q\geq 0$ and $(\text{\rm Re}\,
q)(0)=I$. Consider now an othonormal basis $\{\xi_1, \xi_2,
\ldots\}$ of $\cE$ and let $\cE_m:=\text{\rm span}\{\xi_1,\ldots,
\xi_m\}$. Setting
$$q_m:=P_{F^2(H_n)\otimes \cE_m} q|_{F^2(H_n)\otimes
\cE_m}=\sum_{k=0}^N S_\alpha \otimes P_{\cE_m}C_{(\alpha)}|_{\cE_m},
$$
it is easy to see that $\text{\rm Re}\, q_m\geq 0$, $(\text{\rm
Re}\,q_m)(0)=I$, and
$$
\|q(A)-q(B)\|=\lim_{m\to\infty} \|q_m(A)-q_m(B)\|.
$$
This can be used to  complete the proof.
\end{proof}

     Due to the next result, we call  $d_H$ the  {\it kernel  metric} on
$[B_0(\cH)^n]_1^- $.

\begin{proposition}
\label{P-P} $d_H$  is a metric  on $[B_0(\cH)^n]_1^-$ satisfying
relation
$$
d_H(A,B)=\|P(A,R)-P(B,R)\|
$$
for any $A,B\in [B_0(\cH)^n]_1^-$, where $P(X,R)$ is the free
pluriharmonic Poisson kernel  associated with $X\in
[B_0(\cH)^n]_1^-$. Moreover,   the map \ $[0,1)\ni r\mapsto
d_H(rA,rB)\in \RR^+$ \ is     increasing   and
$$
d_H(A,B)=\sup_{r\in [0,1)}d_H(rA,rB).
$$
\end{proposition}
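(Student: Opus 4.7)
The plan is to first establish the explicit formula $d_H(A,B) = \|P(A,R) - P(B,R)\|$, from which all remaining assertions follow quickly. For the upper bound I would invoke the integral representation of positive free pluriharmonic functions (Corollary 5.5 of \cite{Po-pluriharmonic}): any admissible test $u = \text{\rm Re}\, f$ with $f \in \cA_n \otimes B(\cE)$, $u(0) = I$, $u \geq 0$ can be written as $u(X) = (\text{\rm id} \otimes \mu)[P(X,R)]$ for a \emph{unital} completely positive map $\mu : \overline{\cR_n^* + \cR_n}^{\|\cdot\|} \to B(\cE)$. Since $\text{\rm id} \otimes \mu$ is completely contractive, $\|u(A) - u(B)\| = \|(\text{\rm id} \otimes \mu)[P(A,R) - P(B,R)]\| \leq \|P(A,R) - P(B,R)\|$, and taking the supremum yields $d_H(A,B) \leq \|P(A,R) - P(B,R)\|$.

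For the lower bound, for each $r \in [0,1)$ I would use the test function $u_r(X) := P(rX,R)$, taking $\cE = F^2(H_n)$. Writing $u_r = \text{\rm Re}\, f_r$ where $f_r := I + 2\sum_{|\alpha| \geq 1} r^{|\alpha|} S_\alpha \otimes R_{\widetilde\alpha}^*$, the orthogonal-range properties of the $R_{\widetilde\alpha}$ force $\|\sum_{|\alpha|=k} r^{|\alpha|} S_\alpha \otimes R_{\widetilde\alpha}^*\| = r^k$, so the series converges in norm and $f_r \in \cA_n \otimes B(F^2(H_n))$. Positivity of $u_r$ comes from $P(rX,R) = C_{rX}^* C_{rX} \geq 0$ for the strict row contraction $rX$, and $u_r(0) = I$ is immediate, so $u_r$ is admissible. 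Hence $d_H(A,B) \geq \|u_r(A) - u_r(B)\| = \|P(rA,R) - P(rB,R)\|$ for every $r \in [0,1)$. Since $A,B \in [B_0(\cH)^n]_1^-$ have joint spectral radii strictly less than $1$, the $|\alpha|=k$-homogeneous term of $P(A,R)$ has norm $\|\sum_{|\alpha|=k} A_\alpha A_\alpha^*\|^{1/2}$, which decays geometrically; dominated convergence then gives $P(rA,R) \to P(A,R)$ and $P(rB,R) \to P(B,R)$ in operator norm as $r \to 1^-$, proving $d_H(A,B) \geq \|P(A,R) - P(B,R)\|$ and completing the formula.

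The metric axioms follow: nonnegativity, symmetry, and the triangle inequality transfer from the operator norm; for separation, $P(A,R) = P(B,R)$ applied to $h \otimes 1 \in \cH \otimes F^2(H_n)$ yields $\sum A_\alpha^* h \otimes e_\alpha = \sum B_\alpha^* h \otimes e_\alpha$, forcing $A_i = B_i$ for each $i$. For monotonicity, given any admissible $u = \text{\rm Re}\, f$ and $0 \leq r_1 \leq r_2 < 1$, set $v(X) := u((r_1/r_2) X) = \text{\rm Re}\, (P_{(r_1/r_2)S} \otimes \text{\rm id})(f)$; this lies in $\text{\rm Re}(\cA_n \otimes B(\cE))$ because the left Poisson transform $P_{\rho S}$ at the strict row contraction $\rho S$ is a unital completely contractive map sending $S_\alpha$ to $\rho^{|\alpha|} S_\alpha$ and hence preserves $\cA_n$. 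Moreover $v \geq 0$, $v(0) = I$, and $\|v(r_2 A) - v(r_2 B)\| = \|u(r_1 A) - u(r_1 B)\|$; taking the supremum over $u$ yields $d_H(r_1A, r_1B) \leq d_H(r_2A, r_2B)$. The sup formula then follows from this monotonicity together with the norm continuity $\|P(rA,R) - P(rB,R)\| \to \|P(A,R) - P(B,R)\| = d_H(A,B)$ as $r \to 1^-$.

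The main technical obstacle is verifying that both $f_r$ and $(P_{\rho S} \otimes \text{\rm id})(f)$ genuinely lie in $\cA_n \otimes B(\cE)$ with controlled norm, i.e., the noncommutative Abel-summation assertion that the left Poisson transform at the strict row contraction $\rho S$ acts on the spatial tensor product $\cA_n \otimes B(\cE)$. Once this Poisson-preservation bookkeeping is set up, everything else is a consequence of the Poisson-kernel identities of Section~1 and standard norm-convergence estimates coming from the hypothesis $r(A_1,\ldots,A_n), r(B_1,\ldots,B_n)<1$.
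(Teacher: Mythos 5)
Your proof is correct and follows essentially the same route as the paper's: both establish the formula $d_H(A,B)=\|P(A,R)-P(B,R)\|$ by (a) using the kernel $u_r(X):=P(rX,R)=P(X,rR)$, with coefficients in $B(F^2(H_n))$, as an admissible test function to get the lower bound, and (b) using Corollary~5.5 of \cite{Po-pluriharmonic} to represent an arbitrary test function as $(\text{\rm id}\otimes\mu)[P(\,\cdot\,,R)]$ with $\mu$ unital c.p.\ to get the upper bound, and both rely on $r(A),r(B)<1$ for norm convergence $P(rA,R)\to P(A,R)$. The small differences are cosmetic: you write out the holomorphic representative $f_r$ of $u_r$ explicitly and estimate its homogeneous parts by hand, whereas the paper simply observes $u_r$ is a positive free pluriharmonic function with $u_r(0)=I$; and for monotonicity you reparametrize the test function via $v(X):=u((r_1/r_2)X)$, while the paper obtains the same conclusion by noting $P(sA,R)=(P_{(s/t)S}\otimes\text{\rm id})[P(tA,R)]$ (the noncommutative von Neumann inequality) and then applying the already-proved formula for $d_H$. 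Your upper-bound step compresses the limiting argument needed to extend the Corollary~5.5 identity $u(Y)=(\text{\rm id}\otimes\mu)[P(Y,R)]$ from the open ball to points $A$ with $\|A\|=1$, $r(A)<1$; the paper does this carefully by evaluating at $rA$ and invoking Theorem~4.1 of \cite{Po-pluriharmonic} to pass $r\to1$. You flag this at the end as the ``main technical obstacle,'' so you are aware of it, but in a full write-up that limiting step should be made explicit rather than absorbed into the complete contractivity of $\text{\rm id}\otimes\mu$.
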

\begin{proof} It is easy to see  that $d_H$ is a metric.
Since $r(A)<1$, the map $v:[B(\cK)^n]_1\to B(\cK)\otimes_{min}
B(\cH)$ defined by
$$
v_A(Y):=\sum_{k=1}^\infty \sum_{|\alpha|=k} Y_{\tilde \alpha}\otimes
A_\alpha^*+ I+ \sum_{k=1}^\infty \sum_{|\alpha|=k} Y_{\tilde
\alpha}^*\otimes A_\alpha
$$
is a free pluriharmonic  function on the  open ball
$[B(\cK)^n]_\gamma$ for some $\gamma>1$, where $\cK$ is an infinite
dimensional Hilbert space. Therefore $v_A$ is continuous on
$[B(\cK)^n]_\gamma$ and
$$v_A(S):=\sum_{k=1}^\infty \sum_{|\alpha|=k} S_{\tilde \alpha}\otimes
A_\alpha^*+ I+ \sum_{k=1}^\infty \sum_{|\alpha|=k} S_{\tilde
\alpha}^*\otimes A_\alpha
$$
is in $\text{\rm Re}\,\left(\cA_n\otimes  B(\cH)\right)$.
Consequently, $v_A(S)=\lim_{r\to 1} v_A(rS)$ in the norm topology.
Since a similar result holds for $v_B$,  and $R$ is unitarily
equivalent to $S$, we have
\begin{equation*}
\begin{split}
\|P(A,R)-P(B,R)\|&=\|v_A(S)-v_B(S)\|\\
&=\lim_{r\to 1}\|v_A(rS)-v_B(rS)\|\\
&=\lim_{r\to 1}\|P(rA,R)-P(rB,R)\|.
\end{split}
\end{equation*}
Due to the noncommutative von Neumann inequality the map
$$
[0,1)\ni r\mapsto \|P(rA,R)-P(rB,R)\|\in \RR^+
$$
is increasing.

For each $r\in [0,1)$, the map $u_r(Y):=P(Y, rR)$ is a free
pluriharmonic  on $[B(\cH)^n]_\gamma$ for some $\gamma>1$, with
coefficients in $B(F^2(H_n))$. Moreover,  since $u_r$ is positive
and $u_r(0)=I$,  the definition of $d_H$ implies
\begin{equation}
\label{first-ine-r}\|u_r(A)-u_r(B)\|= \|P(A,rR)-P(B,rR)\|\leq
d_H(A,B)
\end{equation}
for any $r\in [0,1)$.  Taking $r\to 1$ and using the first part of
the proof, we deduce that
\begin{equation}
\label{first-ine} \|P(A,R)-P(B,R)\|\leq d_H(A,B).
\end{equation}

Now, let $G\in \text{\rm Re}\,\left(\cA_n\otimes  B(\cE)\right)$ be
with  $G(0)=I$ and $ G\geq 0$. Since $Y\mapsto G(Y)$ is a positive
free pluriharmonic function of $[B(\cH)^n]_1$, we can apply
Corollary 5.5 from \cite{Po-pluriharmonic} to find a completely
positive linear map $\mu:\cR_n^*+ \cR_n\to B(\cE)$ with $\mu(I)=I$
and
$$
G(Y)=(\text{\rm id}\otimes \mu)[P(Y,R)],\quad Y\in [B(\cH)^n]_1.
$$
Consequently, for each $r\in [0,1)$, we have
$$
\|G(rA)-G(rB)\|\leq \|\mu\|\|P(A,R)-P(B,R)\|.
$$
Since $G\in \text{\rm Re}\,\left(\cA_n\otimes  B(\cE)\right)$,
Theorem 4.1  from \cite{Po-pluriharmonic} shows that
$G(A)=\lim_{r\to 1} G(rA)$ and $G(B)=\lim_{r\to 1} G(rB)$ in the
operator norm topology. Due to the fact that  $\|\mu\|=1$, we have
$$
\|G(A)-G(B)\|\leq \|P(A,R)-P(B,R)\|.
$$
Therefore, $d_H(A,B)\leq \|P(A,R)-P(B,R)\|$, which together with
\eqref{first-ine} prove the equality. The last part of the
proposition can be easily deduced from the considerations above. The
proof is complete.
\end{proof}

\begin{theorem}
\label{ker-metric} Let $d_H$ be the kernel metric on
$[B_0(\cH)^n]_1^-$. Then the following statements hold:
\begin{enumerate}
\item[(i)] the metric $d_H$ is complete on $[B_0(\cH)^n]_1^-$;
\item[(ii)]  the $d_H$-topology is stronger than the norm topology on
$[B_0(\cH)^n]_1^-$;
\item[(iii)]   the $d_H$-topology coincides with the norm topology on  the open unit ball
$[B(\cH)^n]_1$.
\end{enumerate}
\end{theorem}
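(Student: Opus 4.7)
The central tool is Proposition \ref{P-P}, which gives $d_H(A,B) = \|P(A,R) - P(B,R)\|$, turning (i)--(iii) into assertions about the norm behaviour of the free pluriharmonic Poisson kernel. Evaluating the defining series for $P(X,R)$ on a vector $h \otimes 1 \in \cH \otimes F^2(H_n)$ and using $R_j^* 1 = 0$ together with $R_{\widetilde\alpha} 1 = e_\alpha$ yields
\begin{equation*}
P(X,R)(h \otimes 1) = h \otimes 1 + \sum_{|\alpha| \geq 1} X_\alpha^* h \otimes e_\alpha.
\end{equation*}
Compressing to $\cH \otimes \text{\rm span}\{e_1,\ldots,e_n\}$ isolates $\sum_{i=1}^n X_i^* h \otimes e_i$, whose squared norm equals $\sum_i \|X_i^* h\|^2 = \|X^* h\|^2$. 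Consequently
\begin{equation*}
\|A - B\| \leq \|P(A,R) - P(B,R)\| = d_H(A,B),
\end{equation*}
which proves (ii) and, as a by-product, shows every $d_H$-Cauchy sequence is already norm-Cauchy.

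For (iii), the remaining direction on the open ball follows from the factorization
\begin{equation*}
P(X,R) = (I - R_X^*)^{-1}\bigl[(I - X_1 X_1^* - \cdots - X_n X_n^*) \otimes I\bigr](I - R_X)^{-1}
\end{equation*}
obtained in the proof of Theorem \ref{A<0}. Since $R_X^* R_X = (\sum X_i X_i^*) \otimes I$ gives $\|R_X\| = \|X\|$, on $[B(\cH)^n]_1$ the inverse $(I - R_X)^{-1}$ is represented by an operator-norm convergent Neumann series depending continuously on $X$, and $\sum X_i X_i^*$ is also norm-continuous. Hence $X \mapsto P(X,R)$ is norm-continuous on the open ball, so $\|A^{(k)} - A\| \to 0$ forces $d_H(A^{(k)},A) = \|P(A^{(k)},R) - P(A,R)\| \to 0$. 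Together with (ii), this proves coincidence of the two topologies on $[B(\cH)^n]_1$.

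For (i), let $\{A^{(k)}\}$ be $d_H$-Cauchy in $[B_0(\cH)^n]_1^-$. By (ii) it is norm-Cauchy with limit $A \in [B(\cH)^n]_1^-$, and by Proposition \ref{P-P} the sequence $\{P(A^{(k)},R)\}$ converges in operator norm to some $T \geq 0$. Applying the coefficient-extraction of (ii) level by level, via the projections onto $\cH \otimes \text{\rm span}\{e_\beta : |\beta| = m\}$, yields norm convergence $A^{(k)}_\alpha \to A_\alpha$ for every $\alpha \in \FF_n^+$. The main obstacle is to verify $r(A) < 1$, which is what places $A$ in $[B_0(\cH)^n]_1^-$ and allows the series defining $P(A,R)$ to converge; granted this, $T = P(A,R)$ by termwise identification and $d_H(A^{(k)}, A) \to 0$. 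The plan is to exploit the gauge rotations $R_i \mapsto e^{i\theta} R_i$ on $F^2(H_n)$: conjugation by the corresponding unitary $I \otimes U_\theta$ leaves $\|P(X,R)\|$ invariant, and Fourier integration against $e^{-im\theta}$ extracts the homogeneous component $R_{A^{(k)}}^m = \sum_{|\alpha|=m}(A^{(k)}_\alpha)^* \otimes R_{\widetilde\alpha}$ while only contracting norms, giving $\|R_{A^{(k)}}^m - R_{A^{(j)}}^m\| \leq \|P(A^{(k)},R) - P(A^{(j)},R)\|$ uniformly in $m$. Combined with the uniform bound $\sup_k \|P(A^{(k)},R)\| < \infty$ inherited from the Cauchy property and the fact that $\sum_m R_{A^{(k)}}^m = (I - R_{A^{(k)}})^{-1} - I$ converges in norm for each $k$, this transfers norm-summability of the tails to the limit, forcing $\sum_m R_A^m$ to converge in norm and hence $r(A) < 1$.
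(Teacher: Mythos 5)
Your proposal follows the same skeleton as the paper's proof (establish $\|A-B\| \leq d_H(A,B)$, use norm-continuity of $X \mapsto P(X,R)$ on $[B(\cH)^n]_1$, and for completeness verify $r(A)<1$ for the limit), but differs in the mechanics at two points.

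For (ii), your compression-to-$\cH\otimes H_n$ argument applied to vectors $h\otimes 1$ is correct and slightly more elementary than the paper's route, which obtains the inequality $\|A-B\|\leq \|P(A,R)-P(B,R)\|$ via the Fourier-type integral $rR_A = \frac{1}{2\pi}\int_0^{2\pi} e^{it}P(A,re^{it}R)\,dt$ together with the noncommutative von Neumann inequality. Both isolate the degree-one piece of the kernel; yours tests on vectors, the paper integrates.

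For (i), the paper verifies $r(T)<1$ for the norm limit $T$ by working entirely inside the Harnack framework: it uses Theorem~\ref{A<0} and Theorem~\ref{equivalent} to get a uniform bound $P(rA^{(k)},R)\leq(1+c^2)I$ for all $r\in[0,1)$ and all $k\geq k_0$, passes to the limit, and concludes $T\overset{H}{\prec}0$, hence $r(T)<1$. Your gauge-rotation/Fourier approach is a genuinely different route, and the uniform-in-$m$ estimate $\|R_{A^{(k)}}^m - R_{A^{(j)}}^m\|\leq d_H(A^{(k)},A^{(j)})$ is correct. However, the last sentence of your argument is imprecise: the uniform estimate plus $\|R_{A^{(N)}}^m\|\to 0$ for each fixed $N$ gives $\limsup_m \|R_A^m\|\leq \epsilon$ for every $\epsilon>0$, i.e., $\|R_A^m\|\to 0$, but it does \emph{not} give summability of $\|R_A^m\|$ or convergence of $\sum_m R_A^m$ directly, and ``norm-summability of the tails'' is not transferred (the error per term is controlled, not the sum of the tail). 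The correct closing step is: once $\|R_A^m\|\to 0$, pick $N$ with $\|R_A^N\|<1$; then $r(A)=r(R_A)\leq \|R_A^N\|^{1/N}<1$. With that replacement, your argument for (i) is sound and offers a more hands-on alternative to the paper's appeal to the equivalence $A\overset{H}{\prec}0 \iff r(A)<1$. You should also make explicit the final sentence that the norm limit $S$ of $P(A^{(k)},R)$ equals $P(A,R)$ (e.g.\ by identifying matrix entries), which yields $d_H(A^{(k)},A)\to 0$; the paper leaves this step implicit as well.
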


\begin{proof} First we prove that
\begin{equation}
\label{A-B} \|A-B\|\leq \|P(A,R)-P(B,R)\|, \quad A,B\in
[B_0(\cH)^n]_1^-.
\end{equation}
Indeed, for each $r\in[0,1)$, we have
$$
rR_A=\frac{1}{2\pi} \int_0^{2\pi} e^{it} P(A,re^{it} R) dt,
$$
where $R_A:=A_1^*\otimes R_1+\cdots A_n^*\otimes R_n$ is the
reconstruction operator. Using the noncommutative von Neumann
inequality, we obtain
\begin{equation*}
\begin{split}
\|rA-rB\|&= \|rR_A-rR_B\|\\
&=\left\|\frac{1}{2\pi} \int_0^{2\pi} e^{it} [P(A,re^{it}
R)-P(B,re^{it} R)] dt\right\|\\
&\leq \sup_{t\in[0,2\pi]}\left\|P(A,re^{it} R)-P(B,re^{it}
R)\right\|\\
&\leq \|P(A,rR)-P(B,rR)\|,
\end{split}
\end{equation*}
for any $r\in [0,1)$.
 Since $r(A)<1$ and $r(B)<1$, we have
$$
\lim_{r\to 1} \|P(A,rR)-P(B,rR)\|=\|P(A,R)-P(B,R)\|,
$$
which proves our assertion.

Now, to prove (i), let $\{A^{(k)}:=(A_1^{(k)},\ldots,
A_n^{(k)})\}_{k=1}^\infty$ be a $d_H$-Cauchy sequence  in
$[B_0(\cH)^n]_1^-$. Due to relation \eqref{A-B}, we have
$$
\|A^{(k)}-A^{(p)}\|\leq \|P(A^{(k)},
R)-P(A^{(p)},R)\|=d_H(A^{(k)},A^{(p)})
$$
for any $k,p\in \NN$. Hence, $\{A^{(k)}\}_{k=1}^\infty$ is a Cauchy
sequence in the norm topology of $[B(\cH)^n]_1^-$. Therefore,  there
exists $T:=(T_1,\ldots, T_n)$ in $[B(\cH)^n]_1^-$ such that
$\|T-A^{(k)}\|\to 0$, as $k\to\infty$.

Now let us prove that the joint spectral radius $r(T)<1$. Since
$\{A^{(k)}\}_{k=1}^\infty$  is a $d_H$-Cauchy sequence, there exists
$k_0\in \NN$ such that $d_H(A^{(k)},A^{(k_0)})\leq 1$ for any $k\geq
k_0$. On the other hand, since $A^{(k_0)}\in [B_0(\cH)^n]_1^-$,
Theorem \ref{A<0} shows that $A^{(k_0)} \overset{H}{{ \prec}}\, 0$.
Applying Theorem \ref{equivalent}, we find $c\geq 1$ such that
$P(rA^{(k_0)},R)\leq c^2$ for any $r\in [0,1)$. Hence and using
inequality \eqref{first-ine-r}, we have
\begin{equation*}
\begin{split}
P(rA^{(k)}, R)&\leq \left( \|P(rA^{(k)}, R)-P(rA^{(k_0)}, R)\|
+\|P(rA^{(k_0)}, R)\|\right)I\\
&\leq \left(  d_H(A^{(k)},  A^{(k_0)}) +\|P(rA^{(k_0)},
R)\|\right)I\\
&\leq (1+c^2)I
\end{split}
\end{equation*}
for any $k\geq k_0$ and $r\in [0,1)$. Taking $k\to\infty$ and using
the continuity of the free pluriharmonic functions in the operator
norm topology, we obtain $P(rT, R)\leq (1+c^2)I$ for $r\in [0,1)$.
Applying again Theorem \ref{equivalent}, we deduce that $T
\overset{H}{{ \prec}}\, 0$. Now,  Theorem \ref{A<0} implies
$r(T)<1$, which shows that $T\in [B_0(\cH)^n]_1^-$ and  proves part
(i).

Note that part (ii) follows from Proposition \ref{P-P} and
inequality \eqref{A-B}. To prove part (iii), we assume that $A,B\in
[B(\cH)^n]_1$. First,  recall from the proof of Corollary
\ref{delta-ine} that
\begin{equation*}
\left\| (I-R_X)^{-1}\right\| =\frac{1}{1-\|X\|}
\end{equation*}
for any $X\in [B(\cH)^n]_1$.  Consequently, we have
\begin{equation*}
\begin{split}
\|P(A,R)-P(B,R)\|&\leq 2\|(I-R_A)^{-1}-(I-R_B)^{-1}\|\\
& \leq 2\|(I-R_A)^{-1}\| \|(I-R_B)^{-1}\|\|R_A-R_B\|\\
&\leq \frac{2 \|A-B\|}{(1-\|A\|)(1-\|B\|)}.
\end{split}
\end{equation*}
Hence, using part (ii) and the fact that
$d_H(A,B)=\|P(A,R)-P(B,R)\|$, we  deduce that the $d_H$-topology
coincides with the norm topology on  the open unit ball
$[B(\cH)^n]_1$. This completes the proof.
\end{proof}

\begin{corollary}\label{INE}
If $A,B\in [B_0(\cH)^n]_1^-$, then
$$
\|A-B\|\leq d_H(A,B).
$$
Moreover, if $A,B\in [B(\cH)^n]_1$, then
$$
d_H(A,B)\leq \frac{2 \|A-B\|}{(1-\|A\|)(1-\|B\|)}.
$$
\end{corollary}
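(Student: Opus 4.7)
The plan is that both inequalities have essentially been established inside the proof of Theorem \ref{ker-metric}, so the corollary is really a matter of extracting and recording those bounds in their cleanest form.

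For the first inequality, I would invoke Proposition \ref{P-P}, which identifies $d_H(A,B)$ with $\|P(A,R)-P(B,R)\|$ for any $A,B\in [B_0(\cH)^n]_1^-$. Then I would apply inequality \eqref{A-B}, which was established in the proof of Theorem \ref{ker-metric} via the Cauchy-type integral representation
\[
rR_X=\frac{1}{2\pi}\int_0^{2\pi} e^{it} P(X,re^{it}R)\,dt
\]
combined with the noncommutative von Neumann inequality to control $\|rA-rB\|=\|rR_A-rR_B\|$ by $\|P(A,rR)-P(B,rR)\|$, and then letting $r\to 1$ (which is legitimate because $r(A),r(B)<1$). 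Combining the two facts gives $\|A-B\|\leq \|P(A,R)-P(B,R)\|=d_H(A,B)$.

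For the second inequality, I would again use the identity $d_H(A,B)=\|P(A,R)-P(B,R)\|$, and exploit the factorization
\[
P(X,R)=(I-R_X^*)^{-1}+(I-R_X)^{-1}-I,
\]
so that $P(A,R)-P(B,R)=[(I-R_A^*)^{-1}-(I-R_B^*)^{-1}]+[(I-R_A)^{-1}-(I-R_B)^{-1}]$. Applying the resolvent identity $(I-R_A)^{-1}-(I-R_B)^{-1}=(I-R_A)^{-1}(R_A-R_B)(I-R_B)^{-1}$ and the bound $\|(I-R_X)^{-1}\|\leq (1-\|X\|)^{-1}$ (which follows from $\|R_X\|=\|X\|<1$ via a geometric series, as computed in Corollary \ref{delta-ine}), together with $\|R_A-R_B\|=\|A-B\|$, yields
\[
\|P(A,R)-P(B,R)\|\leq \frac{2\|A-B\|}{(1-\|A\|)(1-\|B\|)}.
\]

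There is no real obstacle here: both bounds are direct consequences of ingredients already assembled, namely Proposition \ref{P-P} and the internal estimates in the proof of Theorem \ref{ker-metric}. The main thing to be careful about is to justify passing to the limit $r\to 1$ in \eqref{A-B} for the first inequality, which is fine because on $[B_0(\cH)^n]_1^-$ the Poisson kernel is norm-continuous up to the boundary of $[0,1)$ in the radial parameter (as noted in the proof of Proposition \ref{P-P}).
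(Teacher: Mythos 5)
Your proposal is correct and follows exactly the route the paper intends: the corollary is stated without a separate proof precisely because both bounds are already present — the first as inequality \eqref{A-B} (combined with $d_H(A,B)=\|P(A,R)-P(B,R)\|$ from Proposition \ref{P-P}), and the second as the chain of estimates in part (iii) of the proof of Theorem \ref{ker-metric}, where the step from $\|(I-R_A)^{-1}-(I-R_B)^{-1}\|$ to $\|(I-R_A)^{-1}\|\,\|(I-R_B)^{-1}\|\,\|R_A-R_B\|$ is the resolvent identity you spell out.
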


In what follows we obtain another formula for the hyperbolic
distance  that will be used to prove the main result of this
section.
  We mention that if $f\in \cA_n\otimes M_m$, $m\in
\NN$, then  we call  $\text{\rm Re}\,f$  strictly positive and
denote $\text{\rm Re}\,f>0$ if there exists a constant $a>0$ such
that $\text{\rm Re}\,f\geq aI$.

\begin{proposition}\label{delta-form}
Let   $A:=(A_1,\ldots, A_n)$ and $B:=(B_1,\ldots, B_n)$ be   in
$[B(\cH)^n]_1^-$ such that $A\overset{H}{\sim}\, B$. Then

\begin{equation}
\label{de-sup}
\delta(A,B)=\frac{1}{2}\sup\left|
\ln\frac{\left<\text{\rm Re}\, f(A_1,\ldots,
A_n)x,x\right>}{\left<\text{\rm Re}\, f(B_1,\ldots,
B_n)x,x\right>}\right|,
\end{equation}
where the supremum is taken over all $f\in \cA_n\otimes M_m$, $m\in
\NN$, with $\text{\rm Re}\,f>0$ and $x\in \cH\otimes \CC^m$ with
$x\neq 0$.

\end{proposition}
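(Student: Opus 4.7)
Let $M$ denote twice the supremum on the right-hand side of \eqref{de-sup}, so the goal is to show $\delta(A,B)=M/2$, i.e., $\omega(A,B)=e^{M/2}$. I will establish the two inequalities $M/2\leq\delta(A,B)$ and $\delta(A,B)\leq M/2$ separately, using Theorem \ref{equivalent2} as the bridge between the Harnack equivalence $A\overset{H}{\underset{c}\sim} B$ and pointwise inequalities of the form $\frac{1}{c^2}\text{\rm Re}\,f(B)\leq \text{\rm Re}\,f(A)\leq c^2 \text{\rm Re}\,f(B)$.

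For the inequality $M/2\leq \delta(A,B)$, set $c_0:=\omega(A,B)$. For any $c>c_0$ we have $A\overset{H}{\underset{c}\sim} B$; by Theorem \ref{equivalent2}(iv), for every positive free pluriharmonic function $u$ with matrix coefficients and every $r\in[0,1)$,
\begin{equation*}
\tfrac{1}{c^2}u(rB)\leq u(rA)\leq c^2 u(rB).
\end{equation*}
Applied to $u=\text{\rm Re}\,f$ for $f\in \cA_n\otimes M_m$ with $\text{\rm Re}\,f>0$, and letting $r\to 1$ in the operator norm (valid because $f\in\cA_n\otimes M_m$, so $\text{\rm Re}\,f(rX)\to \text{\rm Re}\,f(X)$ in norm for every $X\in[B(\cH)^n]_1^-$), one obtains
\begin{equation*}
\tfrac{1}{c^2}\text{\rm Re}\,f(B)\leq \text{\rm Re}\,f(A)\leq c^2 \text{\rm Re}\,f(B).
\end{equation*}
Pairing against any $x\in\cH\otimes\CC^m$, $x\neq 0$, and using $\text{\rm Re}\,f(B)\geq aI$ (so the denominators are strictly positive), taking logarithms yields $\bigl|\ln(\langle\text{\rm Re}\,f(A)x,x\rangle/\langle\text{\rm Re}\,f(B)x,x\rangle)\bigr|\leq 2\ln c$. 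Taking the supremum over $f$ and $x$ gives $M\leq 2\ln c$, and letting $c\downarrow c_0$ gives $M/2\leq\ln c_0=\delta(A,B)$.

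For the reverse inequality $\delta(A,B)\leq M/2$, set $c:=e^{M/2}$ and show that $A\overset{H}{\underset{c}\sim} B$, which will force $\omega(A,B)\leq c$. Let $p\in\CC[X_1,\ldots,X_n]\otimes M_m$ be any matrix polynomial with $\text{\rm Re}\,p\geq 0$, and for $\epsilon>0$ form the polynomial $p_\epsilon:=p+\epsilon I\in \cA_n\otimes M_m$, which satisfies $\text{\rm Re}\,p_\epsilon\geq\epsilon I>0$. The defining hypothesis of $M$ gives, for every $x\neq 0$,
\begin{equation*}
\tfrac{1}{c^2}\langle\text{\rm Re}\,p_\epsilon(B)x,x\rangle\leq \langle\text{\rm Re}\,p_\epsilon(A)x,x\rangle\leq c^2\langle\text{\rm Re}\,p_\epsilon(B)x,x\rangle,
\end{equation*}
hence the operator inequality $\tfrac{1}{c^2}(\text{\rm Re}\,p(B)+\epsilon I)\leq \text{\rm Re}\,p(A)+\epsilon I\leq c^2(\text{\rm Re}\,p(B)+\epsilon I)$. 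Letting $\epsilon\downarrow 0$ yields $\tfrac{1}{c^2}\text{\rm Re}\,p(B)\leq \text{\rm Re}\,p(A)\leq c^2\text{\rm Re}\,p(B)$, which is exactly the definition of $A\overset{H}{\underset{c}\sim} B$. Therefore $\omega(A,B)\leq c=e^{M/2}$, giving $\delta(A,B)\leq M/2$.

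The only mildly delicate point is making sure the passage from $\cA_n\otimes M_m$ back down to polynomials (and vice versa) is clean: in one direction one needs the norm-limit $\text{\rm Re}\,f(rX)\to\text{\rm Re}\,f(X)$ on $[B(\cH)^n]_1^-$ for $f\in\cA_n\otimes M_m$, which follows from standard continuity of Poisson-type extensions in the disc algebra setting; in the other direction the $p_\epsilon$ trick with $\epsilon\to 0$ is routine. No topological or limit issue on the supremum side arises because both sides reduce to a single uniform constant $c$.
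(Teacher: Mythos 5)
Your proposal is correct and takes essentially the same route as the paper's own proof: both directions are handled by passing between the Harnack inequalities $\frac{1}{c^2}\text{\rm Re}\,f(B)\leq \text{\rm Re}\,f(A)\leq c^2\text{\rm Re}\,f(B)$ (via Theorem \ref{equivalent2}) and the logarithmic quotient bound, and the reverse direction uses the same $f\mapsto f+\epsilon I$ perturbation with $\epsilon\to 0$. The only cosmetic differences are that you make the $r\to 1$ limit explicit (the paper leaves this implicit in its appeal to Theorem \ref{equivalent}), and you work with $c>\omega(A,B)$ rather than assuming the infimum is attained, which is marginally cleaner.
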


\begin{proof}
Denote the right hand side of \eqref{de-sup} by $\delta'(A,B)$. If
 $f\in \cA_n\otimes M_m$, $m\in
\NN$, with $\text{\rm Re}\,f\geq aI$, then applying the
noncommutative Poison transform, we have $\text{\rm
Re}\,f(Y_1,\ldots, Y_n)\geq aI$ for any $(Y_1,\ldots, Y_n)\in
[B(\cH)^n]_1^-$. Assume that $A\overset{H}{\sim}\, B$ with $c\geq
1$. Due to  Theorem \ref{equivalent}, we deduce that
$$
\frac{1}{c^2}\leq \frac{\left<\text{\rm Re}\, f(A_1,\ldots,
A_n)x,x\right>}{\left<\text{\rm Re}\, f(B_1,\ldots,
B_n)x,x\right>}\leq c^2
$$
for any $x\in \cH\otimes \CC^m$ with $x\neq 0$. Hence, we have
$\delta'(A,B)\leq \ln c$, which implies $\delta'(A,B)\leq
\delta(A,B)$.

To prove the reverse inequality, note that if $g\in \cA_n\otimes
M_m$  with $\text{\rm Re}\,g\geq 0$, then $f:=g+\epsilon I$ has the
property that $\text{\rm Re}\,g\geq \epsilon I$ for any
$\epsilon>0$. Consequently,
$$
\frac{1}{2}\sup\left| \ln\frac{\left<\text{\rm Re}\, f(A_1,\ldots,
A_n)x,x\right>}{\left<\text{\rm Re}\, f(B_1,\ldots,
B_n)x,x\right>}\right|\leq \delta'(A,B)
$$
for any $x\in \cH\otimes \CC^m$ with $x\neq 0$. Note that the latter
inequality is equivalent to
\begin{equation*}
\begin{split}
\frac{1}{e^{2\delta'(A,B)}}&\left<\text{\rm Re}\, g(B_1,\ldots,
B_n)x,x\right> +\frac{\epsilon}{e^{2\delta'(A,B)}}\|x\|^2\\
 &\leq \left<\text{\rm Re}\, g(B_1,\ldots,
B_n)x,x\right> +\epsilon \|x\|^2\\
&\leq e^{2\delta'(A,B)}\left<\text{\rm Re}\, g(B_1,\ldots,
B_n)x,x\right> +\epsilon e^{2\delta'(A,B)} \|x\|^2.
\end{split}
\end{equation*}
Taking $\epsilon \to 0$, we deduce that $\omega(A,B)\leq
e^{\delta'(A,B)}$, which implies $\delta(A,B)\leq \delta'(A,B)$ and
completes the proof.
\end{proof}

We remark that, under the conditions of Proposition
\ref{delta-form}, one can  also prove that
\begin{equation*}
 \delta(A,B)=\frac{1}{2}\sup\left| \ln\frac{\left<\text{\rm Re}\,
p(A_1,\ldots, A_n)x,x\right>}{\left<\text{\rm Re}\, p(B_1,\ldots,
B_n)x,x\right>}\right|,
\end{equation*}
 where the supremum is taken over all noncommutative polynomials
  $p\in
\CC[X_1,\ldots, X_n]\otimes M_m$, $m\in \NN$, with $\text{\rm Re}\,
p> 0$, and $x\in \cH\otimes \CC^m$ with $x\neq 0$.

Indeed, if $f\in \cA_n\otimes M_m$ with $\text{\rm Re}\, f\geq aI,$
$a>0$, and $0<\epsilon <a$, then, as in the proof of Proposition
\ref{aprox1}, we can find a polynomial $p=\sum_{|\alpha|\leq N}
S_\alpha \otimes C_{(\alpha)}$, $C_{(\alpha)}\in M_m$, such that
$\|f-p\|<\epsilon$. Hence,  $\|\text{\rm Re}\, f-\text{\rm Re}\,
p\|<\epsilon$ and, consequently, $\text{\rm Re}\, p >0$. Now, our
assertion  follows.

Here is the main result of this section.

\begin{theorem}
\label{topology} Let  $\delta$ be the   Poincar\'e-Bergman type
metric on $[B(\cH)^n]_1^-$ and let $\Delta$ be a Harnack part  of
$[B_0(\cH)^n]_1^-$ . Then  the following properties hold:
 \begin{enumerate}
 \item[(i)]
 $\delta$ is complete on
 $\Delta$;
\item[(ii)]
the $\delta$-topology is stronger then the $d_H$-topology on
$\Delta$;
\item[(iii)]
the $\delta$-topology, the $d_H$-topology, and the operator norm
topology coincide on the open ball $[B(\cH)^n]_1$.
\end{enumerate}
\end{theorem}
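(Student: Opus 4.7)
I would prove the three items in the order (ii), (i), (iii). Item (ii) is the crucial bridge between the two metrics; (i) then follows by combining it with the completeness of $d_H$ already established in Theorem \ref{ker-metric}; and (iii) is immediate once (ii) is in hand, via Theorem \ref{ker-metric}(iii) together with the explicit formula in Theorem \ref{P-B}(i).

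\textbf{Proof plan for (ii).} Fix $A,B$ in the common Harnack part $\Delta\subset [B_0(\cH)^n]_1^-$. For every $c>\omega(A,B)$ we have $A\overset{H}{\underset{c}\sim} B$, so Theorem \ref{equivalent2}(iii) yields
$$\tfrac{1}{c^2}P(rB,R)\le P(rA,R)\le c^2 P(rB,R),\qquad r\in[0,1).$$
Since $r(A)<1$ and $r(B)<1$, the factorization $P(X,R)=(I-R_X^*)^{-1}+(I-R_X)^{-1}-I$ combined with $(I-rR_X)^{-1}\to (I-R_X)^{-1}$ in operator norm as $r\to 1$ allows the passage to the limit, giving
$$\tfrac{1}{c^2}P(B,R)\le P(A,R)\le c^2 P(B,R).$$
As $P(B,R)\ge 0$, this forces $\|P(A,R)-P(B,R)\|\le (c^2-1)\|P(B,R)\|$. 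Proposition \ref{P-P} identifies the left side with $d_H(A,B)$; letting $c\searrow\omega(A,B)$ gives
$$d_H(A,B)\le(\omega(A,B)^2-1)\,\|P(B,R)\|,$$
so $\delta(A_k,B)\to 0$ forces $\omega(A_k,B)\to 1$, whence $d_H(A_k,B)\to 0$.

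\textbf{Proof plan for (i).} Let $\{A_k\}\subset\Delta$ be $\delta$-Cauchy. By (ii) it is also $d_H$-Cauchy, and Theorem \ref{ker-metric}(i) supplies $T\in[B_0(\cH)^n]_1^-$ with $d_H(A_k,T)\to 0$; Corollary \ref{INE} then gives $\|A_k-T\|\to 0$. Given $\epsilon>0$, pick $N$ with $\delta(A_k,A_p)<\epsilon$ for $k,p\ge N$, fix $k\ge N$ and $c'>e^\epsilon$. By Theorem \ref{equivalent2}(iii),
$$\tfrac{1}{c'^2}P(rA_p,R)\le P(rA_k,R)\le c'^2 P(rA_p,R),\qquad r\in[0,1),\ p\ge N.$$
For each fixed $r\in[0,1)$, the bound $\|rR_{A_p}\|\le r<1$ together with $R_{A_p}\to R_T$ in norm give $(I-rR_{A_p})^{-1}\to (I-rR_T)^{-1}$ and hence $P(rA_p,R)\to P(rT,R)$ in operator norm. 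Passing to $p\to\infty$,
$$\tfrac{1}{c'^2}P(rT,R)\le P(rA_k,R)\le c'^2 P(rT,R),\qquad r\in[0,1).$$
Theorem \ref{equivalent2}(iii), read in the opposite direction, shows $A_k\overset{H}{\underset{c'}\sim} T$; hence $T\in\Delta$ and $\omega(A_k,T)\le c'$. Letting $c'\searrow e^\epsilon$ yields $\delta(A_k,T)\le\epsilon$ for all $k\ge N$.

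\textbf{Proof plan for (iii) and main obstacle.} Theorem \ref{ker-metric}(iii) says $d_H$ and the norm topology agree on $[B(\cH)^n]_1$, and (ii) already shows the $\delta$-topology is finer than $d_H$. For the converse, if $A_k\to B$ in norm with $B\in[B(\cH)^n]_1$, then $\|B\|<1$ makes $C_B$ invertible, and the norm continuity of $X\mapsto\Delta_X$, $X\mapsto R_X$, and $X\mapsto(I-R_X)^{-1}$ (stable since $\|R_B\|<1$) yields $C_{A_k}\to C_B$ and $C_{A_k}^{-1}\to C_B^{-1}$ in norm. Hence $C_{A_k}C_B^{-1},\,C_B C_{A_k}^{-1}\to I$, and Theorem \ref{P-B}(i) gives $\delta(A_k,B)\to 0$. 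The main obstacle is the completeness argument (i): one cannot pass to the limit at $r=1$ directly, because $\overset{H}{\sim}$ requires the Harnack inequality for every $r\in[0,1)$ and only $d_H$-control of $P(A_p,R)$ is available in the limit. The resolution is to argue one fixed $r<1$ at a time, where the strict contraction $\|rR_{A_p}\|\le r$ converts norm convergence $A_p\to T$ into norm convergence $P(rA_p,R)\to P(rT,R)$, which is enough to preserve the two-sided Harnack bounds and thereby pin the limit $T$ inside the original part $\Delta$.
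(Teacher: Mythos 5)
Your overall strategy mirrors the paper's: derive a one-sided bound of $d_H$ by a function of $\delta$ (your $d_H(A,B)\leq(\omega(A,B)^2-1)\|P(B,R)\|$ plays the role of the paper's inequality \eqref{ine-dh}), funnel a $\delta$-Cauchy sequence through the $d_H$-completeness of Theorem \ref{ker-metric}, and then re-establish the two-sided Harnack relation for the limit. Your part (iii), invoking the explicit formula of Theorem \ref{P-B}(i) together with the norm continuity of $X\mapsto C_X$ and $X\mapsto C_X^{-1}$ on $[B(\cH)^n]_1$, is a cleaner shortcut than the paper's estimate \eqref{2de} combined with the boundedness of $\{\|P(A^{(k)},R)^{-1}\|\}$.

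However, in part (i) the step ``By (ii) it is also $d_H$-Cauchy'' is a non sequitur as stated: a finer topology does not carry Cauchy sequences to Cauchy sequences. What is actually needed --- and what the paper proves as an explicit first step --- is that $\{\|P(A_k,R)\|\}$ is bounded; only then does $d_H(A_k,A_p)\leq(\omega(A_k,A_p)^2-1)\|P(A_p,R)\|$ force $d_H(A_k,A_p)\to 0$. Your own inequality supplies the missing boundedness: fix $k_0$ with $\delta(A_k,A_{k_0})<1$ for all $k\geq k_0$; then $\|P(A_k,R)-P(A_{k_0},R)\|=d_H(A_k,A_{k_0})\leq(e^2-1)\|P(A_{k_0},R)\|$, so $\|P(A_k,R)\|\leq e^2\|P(A_{k_0},R)\|$ for $k\geq k_0$. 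With that line inserted, the remainder of your completeness argument --- passing $P(rA_p,R)\to P(rT,R)$ in norm at each fixed $r<1$ to preserve the two-sided Harnack bounds, and so place $T$ in $\Delta$ with $\omega(A_k,T)\leq e^\epsilon$ --- is correct, and constitutes a valid alternative to the paper's treatment, which instead works with $\text{\rm Re}\,f$ and exploits $d_H$-convergence directly.
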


\begin{proof} Let   $A:=(A_1,\ldots, A_n)$ and $B:=(B_1,\ldots, B_n)$ be
in a Harnack part $\Delta$ of  $[B_0(\cH)^n]_1^-$.   Then
$A\overset{H}{\sim}\, B$ and \begin{equation*} \text{\rm Re}\,
f(A_1,\ldots, A_n)\leq \omega(A,B)^2\text{\rm Re}\, f(B_1,\ldots,
B_n)
\end{equation*}
 for any $f\in \cA_n\otimes M_m$  with $\text{\rm Re}\,f\geq 0$.
 Hence, we have
 \begin{equation}
 \label{Re-omega}
\text{\rm Re}\, f(A_1,\ldots, A_n)-\text{\rm Re}\, f(B_1,\ldots,
B_n)\leq [\omega(A,B)^2-1]\text{\rm Re}\, f(B_1,\ldots, B_n).
\end{equation}
On the other hand, since $B\overset{H}{{ \prec}}\, 0$, we have
$r(B)<1$ so $P(B,R)$ makes sense. Also, due to the fact that the
noncommutative Poisson transform  $\text{\rm id}\otimes P_{rR}$ is
completely positive, and $P(B,S)\leq \|P(B,R)\| I$, we deduce that
\begin{equation*}
\begin{split}
P(rB,R)&=(\text{\rm id}\otimes P_{rR})[P(B, S)]\leq \|P(B,R)\| I\\
&=\|P(B,R)\| P(0,R)
\end{split}
\end{equation*}
for any $r\in [0,1)$. Applying now the equivalence
$(iii)\leftrightarrow (iv)$  of Theorem \ref{equivalent}, when
$c^2=\|P(B,R)\|$),   we obtain $ \text{\rm Re}\, f(rB_1,\ldots,
rB_n)\leq \|P(B,R)\|\text{\rm Re}\, f(0) $ for any $r\in [0,1)$.
Taking the limit, as  $r\to 1$,  in the operator norm topology, we
get
$$\text{\rm Re}\, f(B_1,\ldots,
B_n)\leq \|P(B,R)\|\text{\rm Re}\, f(0).
$$
Combining this  inequality with \eqref{Re-omega}, we have
$$
\text{\rm Re}\, f(A_1,\ldots, A_n)-\text{\rm Re}\, f(B_1,\ldots,
B_n)\leq [\omega(A,B)^2-1]\|P(B,R)\|\text{\rm Re}\, f(0).
$$
A similar inequality holds if one interchange $A$ with $B$. If, in
addition, we assume that $\text{\rm Re}\, f(0)=I$, then we can
deduce that
$$
-sI\leq \text{\rm Re}\, f(A_1,\ldots, A_n)-\text{\rm Re}\,
f(B_1,\ldots, B_n)\leq sI,
$$
where $s:=[\omega(A,B)^2-1]\max\{\|P(A,R)\|,\|P(B,R)\|\}$. Since $
\text{\rm Re}\, f(A_1,\ldots, A_n)-\text{\rm Re}\, f(B_1,\ldots,
B_n)$ is a self-adjoint operator, we have $\| \text{\rm Re}\,
f(A_1,\ldots, A_n)-\text{\rm Re}\, f(B_1,\ldots, B_n)\|\leq s$. Due
to the definition of the metric $d_H$, we deduce that
 $d_H(A,B)\leq s$. Consequently, we obtain
 \begin{equation}
 \label{ine-dh}
 d_H(A,B)\leq \max\{\|P(A,R)\|,\|P(B,R)\|\}
 \left(e^{2\delta(A,B)}-1\right).
\end{equation}

Now, we prove that $\delta$ is a complete metric on $\Delta$. Let
$\{A^{(k)}:=(A_1^{(k)},\ldots, A_n^{(k)})\}_{k=1}^\infty\subset
\Delta$ be a $\delta$-Cauchy sequence. First, we prove that the
sequence $\{\|P(A^{(k)}, R)\|\}_{k=1}^\infty$ is bounded.  For any
$\epsilon>0$ there exists $k_0\in \NN$ such that
\begin{equation}
\label{de} \delta(A^{(k)}, A^{(p)})<\epsilon\quad \text{ for any } \
k,p\geq k_0.
\end{equation}
Since $A^{(k)}\overset{H}{\sim}\, A^{(k_0)}$ and
$A^{(k_0)}\overset{H}{{ \prec}}\, 0$,  for any   $f\in \cA_n\otimes
M_m$  with $\text{\rm Re}\,f\geq 0$, we have
\begin{equation}
\label{an-ine} \text{\rm Re}\, f(A^{(k)})\leq \omega (A^{(k)},
A^{(k_0)})^2 \text{\rm Re}\, f(A^{(k_0)})\leq c^2 \text{\rm Re}\,
f(0),
\end{equation}
where  $c:=\|P(A^{(k_0)},R)\|^{1/2}\omega (A^{(k)}, A^{(k_0)}) $.
Consequently, due to Theorem \ref{equivalent}, we have
$A^{(k)}\overset{H}{{ \prec}}\, 0$ and  $ \|P(A^{(k)},R)\|\leq c^2$
for any $k\geq k_0$. Combining this with  relation \eqref{de}, we
obtain

$$
\|P(A^{(k)},R)\|\leq \|P(A^{(k_0)},R)\| e^{2\epsilon}
$$
for any $k\geq k_0$. This shows that the sequence $\{\|P(A^{(k)},
R)\|\}_{k=1}^\infty$ is bounded. Consequently, due to inequality
\eqref{ine-dh}, we deduce that $\{A^{(k)}\}$ is a $d_H$-Cauchy
sequence. According to Theorem \ref{ker-metric}, there exists
$A:=(A_1,\ldots, A_n)\in [B_0(\cH)^n]_1^-$ such that
\begin{equation}
\label{conv1} d_H(A^{(k)}, A)\to 0\quad \text{ as } \ k\to \infty.
\end{equation}

Now, let $f\in \cA_n\otimes M_m$  with $\text{\rm Re}\,f\geq 0$ and
$\text{\rm Re}\,f(0)=I$. Due to relations  \eqref{an-ine} and
\eqref{de}, we have
\begin{equation}
\label{Re-ine} \text{\rm Re}\, f(A^{(k)})\leq \omega (A^{(k)},
A^{(k_0)})^2 \text{\rm Re}\, f(A^{(k_0)})\leq
e^{2\epsilon}\text{\rm Re}\, f(A^{(k_0)})
\end{equation}
for $k\geq k_0$. According to \eqref{conv1}, $\text{\rm Re}\,
f(A^{(k)})\to \text{\rm Re}\, f(A)$, as $k\to\infty$,  in the
operator norm topology. Consequently, \eqref{Re-ine} implies
\begin{equation}
\label{ine-Re} \text{\rm Re}\, f(A)\leq   e^{2\epsilon}\text{\rm
Re}\, f(A^{(k_0)}).
\end{equation}

A similar inequality can be deduced in  the more general case when
$f\in \cA_n\otimes M_m$  with $\text{\rm Re}\,f\geq 0$. Indeed, for
each $\epsilon'>0$ let $g:=\epsilon'I+f$, $Y=\text{\rm Re}\, g(0)$,
and $\varphi:=Y^{-1/2} gY^{-1/2}$. Since
 $\text{\rm Re}\,\varphi\geq 0$ and $\text{\rm Re}\,
\varphi(0)=I$, we can apply inequality \eqref{ine-Re} to $\varphi$
and deduce that
$$
\epsilon' I+\text{\rm Re}\, f(A)\leq e^{2\epsilon}\left(\epsilon'
I+\text{\rm Re}\, f(A^{(k_0)})\right)
$$
for any $\epsilon'>0$. Taking $\epsilon'\to 0$, we get

\begin{equation}
\label{ine-Re2} \text{\rm Re}\, f(A)\leq   e^{2\epsilon}\text{\rm
Re}\, f(A^{(k_0)})
\end{equation}
for any $f\in \cA_n\otimes M_m$  with $\text{\rm Re}\,f\geq 0$. This
shows that
\begin{equation}
\label{<1} A\overset{H}{{ \prec}}\, A^{(k_0)}.
\end{equation}
On the other hand, since $A^{(k_0)}\overset{H}{{ \prec}}\, A^{(k)}$
for any $k\geq k_0$, we deduce that
$$
\text{\rm Re}\, f(A^{(k_0)})\leq \omega (A^{(k_0)}, A^{(k)})^2
\text{\rm Re}\, f(A^{(k)})\leq e^{2\epsilon}\text{\rm Re}\,
f(A^{(k)})
$$
for $k\geq k_0$. According to Theorem \ref{ker-metric}, the
$d_H$-topology is stronger than the norm topology on
$[B_0(\cH)^n]_1^-$. Therefore, relation \eqref{conv1} implies
$A^{(k)}\to A\in[B_0(\cH)^n]_1^-$ in the operator norm topology.
Passing to the limit in the inequality above, we deduce that
\begin{equation}
\label{Re3} \text{\rm Re}\, f(A^{(k_0)})\leq e^{2\epsilon}\text{\rm
Re}\, f(A )
\end{equation}
for any $f\in \cA_n\otimes M_m$  with $\text{\rm Re}\,f\geq 0$.
Consequently, we have $A^{(k_0)}\overset{H}{{ \prec}}\, A$. Hence
and using \eqref{<1}, we obtain $A\overset{H}{{ \sim}}\, A^{(k_0)}$,
which proves that $A\in \Delta$. From the inequalities
\eqref{ine-Re2} and \eqref{Re3}, we have $\omega(A^{(k_0)}, A)\leq
e^{2\epsilon}$.  Hence,  $\delta(A^{(k_0)}, A)<\epsilon$, which
together with \eqref{de} imply $\delta(A^{(k)}, A)<2\epsilon$ for
any $k\geq k_0$. Consequently, $\delta(A^{(k)}, A)\to 0$ as $k\to
\infty$, which proves that $\delta$ is complete on $\Delta$.

Note that we have also proved  part (ii) of this theorem. Now, we
prove part (iii). To this end, assume that $A,B\in [B(\cH)^n]_1$.
Due to the fact that $\|B\|<1$, $P(B,R)$ is a positive invertible
operator. Since $P(B,R)^{-1}\leq \| P(B,R)^{-1}\|$, we have $I\leq
\| P(B,R)^{-1}\| P(B,R)$, which implies $I\leq \| P(B,R)^{-1}\|
P(rB,R)$ for ant $r\in [0,1)$. According to Theorem
\ref{equivalent}, we deduce that $ 0\overset{H}{{ \prec}}\, B$ and
$$
\text{\rm Re}\, f(0)\leq \| P(B,R)^{-1}\|\, \text{\rm Re}\, f(B)
$$
for any $f\in \cA_n\otimes M_m$  with $\text{\rm Re}\,f\geq 0$. If,
in addition, we assume that $\text{\rm Re}\,f(0)=I$, then the latter
inequality implies
\begin{equation*}
\begin{split}
\frac{\left<\text{\rm Re}\, f(A)x,x\right>}{\left<\text{\rm Re}\,
f(B)x,x\right>}-1 &\leq
\frac{\|P(B,R)^{-1}\|}{\|x\|}\left<\left(\text{\rm Re}\,
f(A)-\text{\rm Re}\,
f(B)\right)x,x\right>\\
&\leq \|P(B,R)^{-1}\| d_H(A,B)
\end{split}
\end{equation*}
for any $x\in \cH\otimes \CC^m$, $x\neq 0$. Hence, we deduce that
$$
\ln\frac{\left<\text{\rm Re}\, f(A)x,x\right>}{\left<\text{\rm Re}\,
f(B)x,x\right>}\leq \ln \left(1+\|P(B,R)^{-1}\| d_H(A,B)\right).
$$
One can obtain a  similar inequality  interchanging $A$ with $B$.
Combining these two inequalities, we obtain
\begin{equation}
\label{ln1} \left|\ln\frac{\left<\text{\rm Re}\,
f(A)x,x\right>}{\left<\text{\rm Re}\, f(B)x,x\right>}\right|\leq \ln
\left(1+\max\{\|P(B,R)^{-1}\|, \|P(A,R)^{-1}\|\} d_H(A,B)\right).
\end{equation}
Consider  now the general case when $g\in \cA_n\otimes M_m$  with
$\text{\rm Re}\,g> 0$.  Then  $Y:=\text{\rm Re}\, g(0)$ is a
positive invertible operator on $\cH\otimes \CC^m$  and $f:=Y^{-1/2}
gY^{-1/2}$ has the properties $\text{\rm Re}\,f\geq 0$ and
$\text{\rm Re}\,f(0)=I$. Applying \eqref{ln1} to $f$   when
$x:=Y^{-1/2}y$, $y\in \cH\otimes \CC^m$, and $y\neq 0$, we deduce
that
\begin{equation}
\label{2de} 2\delta(A,B)\leq  \ln \left(1+\max\{\|P(B,R)^{-1}\|,
\|P(A,R)^{-1}\|\} d_H(A,B)\right).
\end{equation}

Now, let $\{A^{(k)}\}_{k=1}^\infty$ be a sequence of  elements in
$[B(\cH)^n]_1$ and let $A\in [B(\cH)^n]_1$ be such that
$d_H(A^{(k)}, A)\to 0$ as $k\to \infty$. Due to Proposition
\ref{P-P}, we deduce that $P(A^{(k)},R)\to P(A,R)$ in the operator
norm topology, as $k\to \infty$. On the other hand,  the operators
$P(A^{(k)},R)$ and $ P(A,R)$ are invertible due to the fact that
$\|A^{(k)}\|<1$ and $\|A\|<1$. Consequently, and using the
well-known fact that the map $Z\mapsto Z^{-1}$ is continuous on the
open set of all invertible operators, we deduce that
$P(A^{(k)},R)^{-1}\to P(A,R)^{-1}$ in the operator norm topology.
Hence, the sequence $\{\|P(A^{(k)},R)^{-1}\|\}_{k=1}^\infty$ is
bounded. Therefore, there exists $M>0$ with
$\|P(A^{(k)},R)^{-1}\|\leq M$ for any $k\in \NN$. Applying now
inequality \eqref{2de}, we deduce that
$$
 2\delta(A^{(k)},A)\leq  \ln \left(1+ M d_H(A^{(k)},A)\right)\quad
 \text{ for any } \ k\in \NN.
 $$
Since $d_H(A^{(k)}, A)\to 0$ as $k\to \infty$,  the latter
inequality implies  that $\delta(A^{(k)},A)\to 0$ as $k\to \infty$.
Therefore the $d_H$-topology on $[B(\cH)^n]_1$ is stronger than the
$\delta$-topology. Due to the first part of this theorem, the two
topologies coincide on $[B(\cH)^n]_1$.  Applying now Theorem
\ref{Harnack}, we complete the proof.
\end{proof}

\begin{corollary}\label{INE2}
Let $\Delta$ be a Harnack part  of $[B_0(\cH)^n]_1^-$ . Then
$$
\delta(A,B)\geq \frac{1}{2} \ln\left( 1+\frac{d_H(A,B)}{
\max\{\|P(A,R)\|,\|P(B,R)\|\}}\right),\qquad A,B\in \Delta.
$$
Moreover, if $A,B\in \Delta:=[B(\cH)^n]_1$, then
$$
\delta(A,B)\leq  \frac{1}{2}\ln \left(1+ d_H(A,B)\max\{
\|P(A,R)^{-1}\|, \|P(B,R)^{-1}\|\}\right).
$$
\end{corollary}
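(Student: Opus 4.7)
The plan is to observe that both inequalities in the corollary are essentially recorded in the course of proving Theorem \ref{topology}; the corollary simply isolates them as self-contained estimates relating $\delta$ and $d_H$.

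For the first inequality, I would revisit the chain of bounds leading to \eqref{ine-dh}. Starting from the Harnack inequality $\text{\rm Re}\,f(A)\leq \omega(A,B)^2\text{\rm Re}\,f(B)$ (guaranteed by $A\overset{H}{\sim}\,B$ via Theorem \ref{equivalent}), and combining it with the bound $\text{\rm Re}\,f(B)\leq \|P(B,R)\|\,\text{\rm Re}\,f(0)$ obtained from $B\overset{H}{\prec}\,0$, one gets for $f\in \cA_n\otimes M_m$ with $\text{\rm Re}\,f\geq 0$ and $\text{\rm Re}\,f(0)=I$:
\begin{equation*}
\|\text{\rm Re}\,f(A)-\text{\rm Re}\,f(B)\|\leq [\omega(A,B)^2-1]\max\{\|P(A,R)\|,\|P(B,R)\|\}.
\end{equation*}
Taking the supremum over such $f$ and invoking $\omega(A,B)=e^{\delta(A,B)}$ yields
\begin{equation*}
d_H(A,B)\leq \max\{\|P(A,R)\|,\|P(B,R)\|\}\left(e^{2\delta(A,B)}-1\right),
\end{equation*}
and solving for $\delta(A,B)$ gives precisely the first inequality.

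For the second inequality, I would use that on $[B(\cH)^n]_1$ both $P(A,R)$ and $P(B,R)$ are positive invertible operators (since $\|A\|,\|B\|<1$, by the factorization $P(X,R)=C_X^*C_X$ with $C_X$ invertible). The argument in the proof of Theorem \ref{topology} established the key bound \eqref{2de}, namely
\begin{equation*}
2\delta(A,B)\leq \ln\left(1+\max\{\|P(A,R)^{-1}\|,\|P(B,R)^{-1}\|\}\,d_H(A,B)\right),
\end{equation*}
obtained by applying Proposition \ref{delta-form} to $f:=Y^{-1/2}gY^{-1/2}$ with $Y=\text{\rm Re}\,g(0)$ and comparing the ratio $\left<\text{\rm Re}\,f(A)x,x\right>/\left<\text{\rm Re}\,f(B)x,x\right>$ to $d_H(A,B)$ using $I\leq \|P(B,R)^{-1}\|P(B,R)$ (and symmetrically for $A$). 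Dividing by $2$ gives the claimed bound.

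In both cases no new machinery is required; the only subtlety is ensuring that the two inequalities derived inside the proof of Theorem \ref{topology} are recorded in their sharpest symmetric forms (with the $\max$ taken over both arguments), which is routine. The proof is therefore a short citation of those intermediate estimates.
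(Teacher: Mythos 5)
The proposal is correct and takes essentially the same approach as the paper: the corollary has no separate proof in the text, being exactly the rearrangement of the intermediate estimates \eqref{ine-dh} and \eqref{2de} established inside the proof of Theorem \ref{topology}. Your recapitulation of how those estimates arise (the Harnack inequality plus $\text{\rm Re}\,f(B)\leq \|P(B,R)\|\,\text{\rm Re}\,f(0)$ for the lower bound on $\delta$, and Proposition \ref{delta-form} with $I\leq\|P(B,R)^{-1}\|P(B,R)$ for the upper bound) is accurate, and solving each for $\delta(A,B)$ gives the two displayed inequalities.
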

Combining Corollary \ref{INE} with Corollary \ref{INE2},  one can
obtain inequalities involving the hyperbolic metric $\delta$ and the
metric induced by the operator norm on $[B_0(\cH)^n]_1^-$. In
particular, if $A,B\in [B(\cH)^n]_1$, then we have
\begin{equation*}
\frac{1}{2} \ln\left( 1+\frac{\|A-B\|}{
\max\{\|P(A,R)\|,\|P(B,R)\|\}}\right) \leq
 \delta(A,B)
\end{equation*}
and
 \begin{equation*} \delta(A,B)\leq \frac{1}{2}\ln \left(1+ \frac{2
\|A-B\|}{(1-\|A\|)(1-\|B\|)}\max\{ \|P(A,R)^{-1}\|,
\|P(B,R)^{-1}\|\}\right).
\end{equation*}

\bigskip

\section{Schwarz-Pick  lemma    with respect to the  hyperbolic metric on $[B(\cH)^n]_1$}

A very important property of the Poincar\' e-Bergman distance
$\beta_m:\BB_m\times \BB_m\to \RR^+$ is that
$$\beta_m(f(z), f(w))\leq \beta_n(z,w),\quad z,w\in \BB_n,
$$
 for any
holomorphic function $f:\BB_n\to \BB_m$. In this section we extend this result and obtain
 a
Schwarz-Pick lemma for free holomorphic functions on $[B(\cH)^n]_1$
with operator-valued coefficients, with respect to the hyperbolic
metric on the noncommutative ball $[B(\cH)^n]_1$.

\begin{lemma} \label{ReRe}
Let $A:=(A_1,\ldots, A_n)$ and $B:=(B_1,\ldots,
B_n)$ be pure  row contractions.  Then
$A\overset{H}{{\underset{c}\prec}}\, B$ if and only if
$$
\text{\rm Re}\,f(A_1,\ldots, A_n)\leq c^2 \text{\rm
Re}\,f(B_1,\ldots, B_n)$$
 for any $f\in F_n^\infty \bar \otimes B(\cE)$ with $
\text{\rm Re}\,f\geq 0$.

\end{lemma}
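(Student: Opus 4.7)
The plan is to reduce both implications to already-established equivalences from Theorem \ref{equivalent}, using the $F_n^\infty$-functional calculus (available because $A$ and $B$ are pure, hence completely non-coisometric) to bridge between matrix-coefficient polynomials and arbitrary elements of $F_n^\infty\bar \otimes B(\cE)$.

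The forward direction ($\Leftarrow$) is a matter of specialization. Given a noncommutative polynomial $p\in\CC[X_1,\ldots,X_n]\otimes M_m$ with $\text{\rm Re}\,p\geq 0$, I would fix any isometric embedding $M_m\hookrightarrow B(\cE)$ (available since $\cE$ is separable and infinite-dimensional) and view $p$ as an element of $\cA_n\otimes B(\cE)\subset F_n^\infty\bar \otimes B(\cE)$. The condition $\text{\rm Re}\,p\geq 0$ as a matrix-valued polynomial is equivalent to $\text{\rm Re}\,p\geq 0$ as an operator-valued element, since both assertions amount to $\text{\rm Re}\,p(S_1,\ldots,S_n)\geq 0$ in the relevant corner. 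The hypothesis then gives $\text{\rm Re}\,p(A)\leq c^2\,\text{\rm Re}\,p(B)$, which is the defining condition of $A\overset{H}{\underset{c}\prec}B$.

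For the reverse direction ($\Rightarrow$), assume $A\overset{H}{\underset{c}\prec}B$ and fix $f\in F_n^\infty\bar \otimes B(\cE)$ with $\text{\rm Re}\,f\geq 0$. Because $f$ corresponds to an element of $H^\infty(B(\cH)^n_1)\cong F_n^\infty\bar\otimes B(\cE)$, the map $u(X):=\text{\rm Re}\,f(X)$ is a free pluriharmonic function on $[B(\cH)^n]_1$ with operator-valued coefficients in $B(\cE)$, and it is positive since $\text{\rm Re}\,f(rS_1,\ldots,rS_n)\geq 0$ for $r\in[0,1)$ by the standing characterization recalled just before Theorem \ref{equivalent}. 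Applying Theorem \ref{equivalent}, part (iv), to $u$ therefore yields
\[
\text{\rm Re}\,f(rA_1,\ldots,rA_n)\leq c^2\,\text{\rm Re}\,f(rB_1,\ldots,rB_n),\qquad r\in[0,1).
\]
Finally, I would let $r\to 1$: because $A$ and $B$ are pure row contractions, the $F_n^\infty$-functional calculus of \cite{Po-funct}, extended to operator coefficients, gives $f(rA)\to f(A)$ and $f(rB)\to f(B)$ in SOT as $r\to 1$, with uniform norm bound $\|f\|$ by the noncommutative von Neumann inequality. Since adjoint is WOT-continuous on norm-bounded nets, $\text{\rm Re}\,f(rA)\to\text{\rm Re}\,f(A)$ and $\text{\rm Re}\,f(rB)\to\text{\rm Re}\,f(B)$ in WOT; pairing the inequality above with an arbitrary vector $v\in\cH\otimes\cE$ and passing to the limit yields $\text{\rm Re}\,f(A)\leq c^2\,\text{\rm Re}\,f(B)$.

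The only genuine obstacle is this final passage $r\to 1$: it is essential that $A$ and $B$ be pure (or at least c.n.c.) so that $f(rA)$ and $f(rB)$ have a well-defined SOT limit and the uniform bound from von Neumann's inequality is available. One cannot hope for SOT convergence of the adjoints, but WOT is already sufficient to preserve an operator inequality between self-adjoint operators via the sesquilinear pairings $\langle\,\cdot\,v,v\rangle$.
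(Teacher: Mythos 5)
Your proof is correct and follows essentially the same route as the paper's: for the nontrivial direction, use Theorem~\ref{equivalent} to get $\text{\rm Re}\,f(rA)\leq c^2\,\text{\rm Re}\,f(rB)$ for $r\in[0,1)$, then invoke the $F_n^\infty$-functional calculus for pure row contractions to pass $r\to1$. Your treatment of the limit is slightly more careful than the paper's (which merely says ``taking the limit'' after establishing the SOT convergence of $f(rA)$ and $f(rB)$): you correctly note that SOT convergence of $f(rA)$ only gives WOT convergence of $\text{\rm Re}\,f(rA)$ once a uniform norm bound is in hand, and that WOT suffices to preserve a self-adjoint operator inequality via the pairings $\langle\,\cdot\,v,v\rangle$. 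The one small quibble: in the easy direction you should say ``$*$-homomorphic embedding'' rather than ``isometric embedding'' of $M_m$ into $B(\cE)$, since it is the $*$-structure that transports $\text{\rm Re}\,p\geq 0$; of course any injective $*$-homomorphism of $M_m$ is automatically isometric, so the intent is clear.
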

\begin{proof} Assume that $A,B$ are pure row contractions with
 $A\overset{H}{{\underset{c}\prec}}\, B$
 and let $f\in F_n^\infty \bar \otimes B(\cE)$  be such that  $
\text{\rm Re}\,f\geq 0$. Then  $f$ has  a unique  representation of
the form
$$
f(S_1,\ldots, S_n)=\sum_{k=0}^\infty \sum_{|\alpha|=k}
S_\alpha\otimes A_{(\alpha)},\quad A_{(\alpha)}\in B(\cE).
$$
Due to the results from \cite{Po-holomorphic}, for each $r\in
[0,1)$, $f_r(S_1,\ldots, S_n):=f(rS_1,\ldots, rS_n)$ is in
$\cA_n\otimes B(\cE)$. Moreover, applying the noncommutative Poisson
transform $P_{[rS_1,\ldots, rS_n]}\otimes \text{\rm id}$ to the
inequality $ \text{\rm Re}\,f(S_1,\ldots, S_n)\geq 0$, we deduce
that  $ \text{\rm Re}\,f_r(S_1,\ldots, S_n)\geq 0$. Since
$A\overset{H}{{\underset{c}\prec}}\, B$, Theorem \ref{equivalent}
shows that there exists $c\geq 1$ such that
\begin{equation}
\label{Refr} \text{\rm Re}\,f_r(A_1,\ldots, A_n)\leq c^2 \text{\rm
Re}\,f_r(B_1,\ldots, B_n).
\end{equation}
Due to the $F_n^\infty$-functional calculus for pure row
contractions (see \cite{Po-funct}),
 $$
 f(A_1,\ldots, A_n):=\text{\rm
SOT}-\lim_{r\to 1}f_r(A_1,\ldots, A_n)\quad \text{and}\quad f(B_1,\ldots, B_n):=\text{\rm
SOT}-\lim_{r\to 1}f_r(B_1,\ldots, B_n)
$$
 exist. Consequently, taking the limit, as $r\to 1$,
in inequality \eqref{Refr}, we get
$$
\text{\rm Re}\,f(A_1,\ldots, A_n)\leq c^2 \text{\rm
Re}\,f(B_1,\ldots, B_n).
$$
Since the converse is obvious, the proof is complete.
\end{proof}

Now we prove a Schwarz-Pick lemma for free holomorphic functions on $[B(\cH)^n]_1$
with operator-valued coefficients, with respect to the hyperbolic
metric.

\begin{theorem} \label{S-P} Let $F_j:[B(\cH)^n]_1\to B(\cH) \otimes_{min}
B(\cE)$, $j=1,\ldots, m$, be free holomorphic functions with
coefficients in $B(\cE)$, and assume that $F:=(F_1,\ldots, F_m)$ is
a contractive free holomorphic function. If $X,Y\in [B(\cH)^n]_1$,
then
 $F(X)\overset{H}{\sim}\, F(Y)$ and
$$
\delta(F(X), F(Y))\leq \delta(X,Y),
$$
where $\delta$ is the hyperbolic  metric  defined on the Harnack
parts of the noncommutative ball $[B(\cH)^n]_1^-$.
\end{theorem}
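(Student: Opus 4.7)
The plan is to deduce the Schwarz--Pick inequality from the characterization of Harnack equivalence by positive free pluriharmonic functions (Theorem~\ref{equivalent2}(iv)), via a composition argument. Set $c:=\omega(X,Y)=e^{\delta(X,Y)}$, so $X\overset{H}{{\underset{c}\sim}}\, Y$. Since $\|F\|_\infty\leq 1$, the images $F(X),F(Y)$ lie in $[B(\cH\otimes\cE)^m]_1^-$, and it suffices to prove $F(X)\overset{H}{{\underset{c}\sim}}\, F(Y)$, which will give $\omega(F(X),F(Y))\leq c$ and hence $\delta(F(X),F(Y))\leq\delta(X,Y)$. By the equivalence (i)$\Leftrightarrow$(iv) of Theorem~\ref{equivalent2} applied in the target ball, this reduces to establishing
$$\frac{1}{c^2}\,u(sF(Y))\leq u(sF(X))\leq c^2\,u(sF(Y))$$
for every positive free pluriharmonic function $u$ with operator-valued coefficients and every $s\in[0,1)$.

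Fix such $s$ and $u=\text{\rm Re}\,h$ with $h$ free holomorphic on $[B(\cH\otimes\cE)^m]_1$ having coefficients $C_{(\alpha)}$ in some $B(\cE_1)$. The central step is to show that the composition
$$v_s(X):=u(sF(X))=\text{\rm Re}\,h(sF(X)),\qquad X\in[B(\cH)^n]_1,$$
is itself a positive free pluriharmonic function on $[B(\cH)^n]_1$ with operator-valued coefficients. Since $\|sF(X)\|\leq s<1$ uniformly in $X$, substituting the power series of $h$ gives
$$h(sF(X))=\sum_{\alpha\in\FF_m^+}s^{|\alpha|}F_\alpha(X)\otimes C_{(\alpha)},\qquad F_\alpha(X):=F_{i_1}(X)\cdots F_{i_k}(X).$$
The row-contractivity of $F(X)$ yields $\|\sum_{|\alpha|=k}F_\alpha(X)F_\alpha(X)^*\|\leq 1$, and combined with the growth condition $\limsup_k\|\sum_{|\alpha|=k}C_{(\alpha)}^*C_{(\alpha)}\|^{1/2k}\leq 1$ and the standard row-column norm bound, the series converges absolutely in the operator norm for every $s<1$. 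Expanding each $F_\alpha$ in its own power series in $X$ and regrouping (justified by absolute convergence on compact subsets of $[B(\cH)^n]_1$) presents $h\circ(sF)$ as a free holomorphic function on $[B(\cH)^n]_1$; thus $v_s=\text{\rm Re}(h\circ(sF))$ is free pluriharmonic, and positivity follows because $u\geq 0$ on the open ball and $sF(X)\in[B(\cH\otimes\cE)^m]_1$.

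Once $v_s$ is identified as a positive free pluriharmonic function on $[B(\cH)^n]_1$, Theorem~\ref{equivalent2}(iv) applied to $X\overset{H}{{\underset{c}\sim}}\, Y$ gives
$$\frac{1}{c^2}\,v_s(rY)\leq v_s(rX)\leq c^2\,v_s(rY)\quad\text{for every }r\in[0,1).$$
Since $X,Y$ belong to the open ball and free pluriharmonic functions are norm continuous there, letting $r\uparrow 1$ yields $\frac{1}{c^2}v_s(Y)\leq v_s(X)\leq c^2v_s(Y)$, which is exactly the required inequality at the level of $u$. A second invocation of Theorem~\ref{equivalent2}, now in the direction (iv)$\Rightarrow$(i) applied to $F(X),F(Y)\in[B(\cH\otimes\cE)^m]_1^-$, delivers $F(X)\overset{H}{{\underset{c}\sim}}\, F(Y)$, completing the argument.

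The principal obstacle is the composition step, i.e.\ checking that $u\circ(sF)$ is genuinely free pluriharmonic with operator-valued coefficients. The rescaling by $s<1$ is essential: $F$ may send points of the open ball into the boundary of the target closed ball (for instance when $F$ has a constant term of unit norm), so the power series of $h$ cannot be substituted at $F(X)$ directly. The factor $s$ places $sF(X)$ uniformly inside the open ball, where the series substitution and rearrangement are legitimate, after which the norm continuity of pluriharmonic functions on $[B(\cH)^n]_1$ handles the passage $r\uparrow 1$ and Theorem~\ref{equivalent2} supplies both endpoints of the reduction.
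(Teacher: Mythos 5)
Your proposal takes a genuinely different route from the paper, and the skeleton is logically sound, but the load-bearing step is left unproven. The paper avoids any composition of power series: it passes to the boundary functions $f_j(S)\in F_n^\infty\bar\otimes B(\cE)$ of the $F_j$, notes that since $\|F\|_\infty\leq 1$ the $m$-tuple $(f_1(S),\ldots,f_m(S))$ is a row contraction, and applies the noncommutative Poisson transform $P_{(f_1(S),\ldots,f_m(S))}\otimes\text{id}$ to the inequality $\text{Re}\,p(L_1,\ldots,L_m)\geq 0$ for a \emph{polynomial} $p$ with matrix coefficients. This yields $\text{Re}\,p(f_1(S),\ldots,f_m(S))\geq 0$, an element of $F_n^\infty\bar\otimes B(\cE)\otimes_{\min}M_k$, to which Lemma~\ref{ReRe} can be applied directly (here the pureness of $X,Y\in[B(\cH)^n]_1$ matters), and the multiplicativity of the $F_n^\infty$-functional calculus identifies $p(f_1(S),\ldots,f_m(S))$ evaluated at $X$ with $p(F(X))$. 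Because the Harnack preorder is \emph{defined} in terms of noncommutative polynomials, nothing more than plugging contractive $F_n^\infty$-elements into a polynomial is ever needed, and that is a finite algebraic operation.

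Your route instead runs the reduction through Theorem~\ref{equivalent2}(iv) twice, and the entire weight of the argument rests on the claim that $v_s:=u\circ(sF)$ is again a \emph{positive free pluriharmonic function on $[B(\cH)^n]_1$ with operator-valued coefficients}. You correctly isolate this as the principal obstacle and correctly observe that the rescaling by $s<1$ is what makes the substitution plausible, but the justification offered---``absolute convergence of the double series, regrouping''---does not establish it. What has to be verified is not merely that $\sum_\alpha s^{|\alpha|}F_\alpha(X)\otimes C_{(\alpha)}$ converges in norm for each fixed $X$ (that much follows from $\|sF(X)\|\leq s$ and the radius condition on $h$), but that after rearrangement the result is a genuine free holomorphic function: that each coefficient $D_{(\gamma)}=\sum_\alpha s^{|\alpha|}B_{(\gamma,\alpha)}\otimes C_{(\alpha)}$ converges in $B(\cE\otimes\cE_1)$, and, crucially, that the regrouped series obeys the growth condition $\limsup_k\|\sum_{|\gamma|=k}D_{(\gamma)}^*D_{(\gamma)}\|^{1/2k}\leq 1$ required in the definition of a free holomorphic function. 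Pointwise norm convergence of a series of free holomorphic functions does not by itself deliver a power-series representation with this growth control. This composition lemma is true and fits the framework of \cite{Po-holomorphic}, but it is a substantive statement that deserves its own proof (or a citation to a place where it is proved), and as written it is a gap. The paper's approach buys exactly the avoidance of this analytic bookkeeping, at the cost of invoking the $F_n^\infty$-boundary-function machinery and Lemma~\ref{ReRe}; your approach is more conceptual---Schwarz--Pick by pullback of positive pluriharmonics---but pays for it in the composition lemma.

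Two minor points. First, the appeal to $c=\omega(X,Y)$ satisfying $X\overset{H}{{\underset{c}\sim}}\, Y$ with $c$ equal to the infimum, rather than some $c'>\omega(X,Y)$, silently uses the closedness of the set of admissible constants (this is available via Lemma~\ref{OMr}, but worth a word). Second, when you apply Theorem~\ref{equivalent2}(iv) to $v_s$ and take $r\uparrow 1$, you are relying on the norm continuity of $v_s$ on the open ball; once $v_s$ is known to be free pluriharmonic this is automatic, so it is not a separate gap, but it does mean the whole chain really does hinge on the composition lemma.
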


\begin{proof}
Assume that each $F_j$ has  a representation of the form
$$
F_j(X_1,\ldots, X_n)=\sum_{k=0}^\infty \sum_{|\alpha|=k} X_\alpha
\otimes A_{(\alpha,j)},\quad A_{(\alpha,j)}\in B(\cE).
$$
Since $F_j$ is a bounded free holomorphic function, due to
\cite{Po-holomorphic} (see also \cite{Po-pluriharmonic}), there
exists $f_j(S_1,\ldots, S_n)\in F_n^\infty \bar \otimes B(\cE)$ such
that
$$
F_j(X_1,\ldots, X_n)=(P_X\otimes \text{\rm id})[f_j(S_1,\ldots,
S_n)],\quad X:=(X_1,\ldots, X_n)\in [B(\cE)^n]_1.
$$
Moreover,
$$
f_j(S_1,\ldots, S_n)=\text{\rm SOT}-\lim_{r\to 1} F_j(rS_1,\ldots,
rS_n)
$$
and $ F_j(rS_1,\ldots, rS_n)\in \cA_n\otimes B(\cE)$. Now, let
$L_1,\ldots, L_m$ be the left creation operators on the full Fock
space  $F^2(H_m)$ with $m$ generators, and consider
$$
p(L_1,\ldots, L_m)=\sum_{|\alpha|\leq q} L_\alpha\otimes
M_{(\alpha)},\quad M_{(\alpha)}\in B(\CC^k)
$$ to be an arbitrary polynomial with  $\text{\rm Re}\,p(L_1,\ldots,
L_m)\geq 0$. Since $F=(F_1,\ldots, F_m)$ is a contractive free
holomorphic function, we deduce that (see \cite{Po-holomorphic})
$$
\|[f_1(S_1,\ldots, S_n),\ldots, f_m(S_1,\ldots,
S_n)]\|=\|F\|_\infty\leq 1.
$$
Applying  now  the noncommutative  Poisson transform
$P_{(f_1(S_1,\ldots, S_n),\ldots, f_m(S_1,\ldots, S_n))}\otimes
\text{\rm id}$ to the inequality  $\text{\rm Re}\,p(L_1,\ldots,
L_m)\geq 0$, we obtain
\begin{equation}
\label{Re<Re}
 \text{\rm Re}\,p(f_1(S_1,\ldots, S_n),\ldots,
f_m(S_1,\ldots, S_n))\geq 0.
\end{equation}
Note that $p(f(S)):=p(f_1(S_1,\ldots, S_n),\ldots, f_m(S_1,\ldots,
S_n))$ in is $F_n^\infty \bar \otimes B(\cE)\otimes_{min}B(\CC^m)$.

Let  $X:=(X_1,\ldots, X_n)$ and $Y:=(Y_1,\ldots, Y_n)$ be in the
open ball $[B(\cH)^n]_1$. Due to  Theorem \ref{foias2}, we have
$X\overset{H}{\sim}\, Y$. Assume that
$X\overset{H}{{\underset{c}\sim}}\, Y$ for some $c\geq 1$. Due to
\eqref{Re<Re}   and Lemma \ref{ReRe}, we deduce that

$$
\frac{1}{c^2} \text{\rm Re}\,p(F(Y))\leq \text{\rm Re}\,p(F(X))\leq
c^2 \text{\rm Re}\,p(F(Y)).
$$
Consequently, $F(X)\overset{H}{{\underset{c}\sim}}\, F(Y)$ and
$\omega(F(X),F(Y))\leq c$, where $\omega$ is defined by relation \eqref{om}.
 This implies that $\omega(F(X),F(Y))\leq
\omega(X,Y)$, which completes the proof.
\end{proof}

We remark that the hyperbolic metric $\delta$ coincides with the
Carath\' eodory type metric  defined by
$$
C_{\bf ball} (X,Y):=\sup_{F}\delta(F(X),F(Y)),\quad X,Y\in
[B(\cH)^n]_1,
$$
where the supremum  is taken over all free holomorphic functions
$F:[B(\cH)^n]_1\to [B(\cH)^n]_1$. Indeed, due to Theorem \ref{S-P},
we have $C_{\bf ball} (X,Y)\leq \delta(X,Y)$. Taking $F={\rm id}$,
we also deduce that $C_{\bf ball} (X,Y)\geq \delta(X,Y)$, which
proves our assertion.

\begin{corollary} \label{Sch-part}Let $F:=(F_1,\ldots, F_m)$ be
a contractive free holomorphic function  with coefficients in
$B(\cE)$.
 If $z,w\in \BB_n$, then $F(z)\overset{H}{\sim}\, F(w)$ and
$$
\delta(F(z), F(w))\leq \delta(z,w).
$$
\end{corollary}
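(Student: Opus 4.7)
The plan is to deduce this corollary as a direct specialization of Theorem \ref{S-P}, essentially by viewing scalar points of $\BB_n$ as particular $n$-tuples of operators in the noncommutative ball. Given $z = (z_1,\ldots,z_n) \in \BB_n$, one identifies $z$ with the $n$-tuple $(z_1 I_\cH, \ldots, z_n I_\cH) \in [B(\cH)^n]_1$ for any Hilbert space $\cH$ (and in particular one may take $\cH = \CC$, in which case the identification is literal and $F$ evaluated at $z$ is the classical power series sum $F_j(z)=\sum_\alpha z_\alpha A_{(\alpha,j)} \in B(\cE)$).

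Second, I would apply Theorem \ref{S-P} with $X := z$ and $Y := w$ viewed as scalar row contractions in $[B(\cH)^n]_1$. Since $F = (F_1,\ldots,F_m)$ is a contractive free holomorphic function with $\|F\|_\infty \leq 1$, the hypothesis of Theorem \ref{S-P} is satisfied, so $F(z) \overset{H}{\sim} F(w)$ in the closed ball $[B(\cK)^m]_1^-$ (where $\cK = \cH \otimes \cE$, or simply $\cE$ when $\cH = \CC$), and
$$
\delta(F(z), F(w)) \leq \delta(z,w),
$$
which is exactly the stated inequality. By Theorem \ref{P-B}(iii), the right-hand side equals the Poincar\'e--Bergman distance $\frac{1}{2}\ln\frac{1+\|\psi_z(w)\|_2}{1-\|\psi_z(w)\|_2}$, so this recovers a classical-style Schwarz--Pick inequality in the scalar case as a by-product, but no additional work is needed to establish the corollary itself.

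There is no substantive obstacle: the only thing to verify is that the scalar embedding $\BB_n \hookrightarrow [B(\cH)^n]_1$, $z \mapsto (z_1 I_\cH,\ldots,z_n I_\cH)$, is compatible with the formal $F$-calculus and with the hyperbolic metric. Compatibility with $F$ is immediate from the norm-convergent series defining $F$ (the operators $z_i I_\cH$ commute with everything, so $F(z_1 I_\cH,\ldots, z_n I_\cH) = I_\cH \otimes F(z_1,\ldots,z_n)$), and compatibility with $\delta$ follows because the operator-norm inequalities defining Harnack equivalence at scalar row contractions reduce coordinatewise to the corresponding inequalities among the scalar-valued evaluations, so $\omega$ and hence $\delta$ are unchanged under this embedding. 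Thus the corollary reduces in one step to Theorem \ref{S-P}.
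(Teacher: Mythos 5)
Your proposal is correct and matches the paper's approach: Corollary \ref{Sch-part} is precisely the $\cH=\CC$ specialization of Theorem \ref{S-P}, under the identification $\BB_n=[B(\CC)^n]_1$, which is why the paper records it without a separate proof. The detour through the embedding $z\mapsto(z_1 I_\cH,\ldots,z_n I_\cH)$ for general $\cH$ and its compatibility with $\delta$ is harmless but unnecessary once you fix $\cH=\CC$, since then $F(z)=\sum_\alpha z_\alpha A_{(\alpha)}\in B(\cE)$ literally and nothing further needs checking.
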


We remark that if $m=n=1$ in Corollary \ref{Sch-part}, we obtain a
very simple proof of Suciu's result \cite{Su}. Note  also that, in
particular (if $\cE=\CC$), for any free holomorphic function
$F:[B(\cH)^n]_1\to [B(\cH)^m]_1$, we  have
$$
\delta(F(X), F(Y))\leq \delta(X,Y),
$$
which extends the result  mentioned at the beginning  of this
section.

\begin{corollary} If  $f\in H^\infty(\DD)$ is a contractive analytic
function on the open unit disc and  $A,B\in B(\cH)$ are strict
contractions, then $f(A)\overset{H}{\sim}\, f(B)$ and
$$
\delta(f(A), f(B))\leq \delta(A,B).
$$
\end{corollary}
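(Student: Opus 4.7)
This is the $n=m=1$ specialization of the Schwarz--Pick lemma (Theorem \ref{S-P}), so the plan is simply to check that the hypotheses translate correctly and then invoke that theorem.

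First I would recall that, for $n=1$, the noncommutative open ball $[B(\cH)^1]_1$ is exactly the set of strict contractions $\{T\in B(\cH):\|T\|<1\}$, so the hypothesis that $A$ and $B$ are strict contractions is precisely $A,B\in [B(\cH)^1]_1$. Next, given a contractive analytic function $f\in H^\infty(\DD)$ with Taylor expansion $f(z)=\sum_{k=0}^\infty a_k z^k$, I would define $F:[B(\cH)^1]_1\to B(\cH)$ by
\begin{equation*}
F(X):=\sum_{k=0}^\infty a_k X^k,\qquad X\in [B(\cH)^1]_1.
\end{equation*}
The coefficients $\{a_k\}\subset\CC$ satisfy $\limsup_{k\to\infty}|a_k|^{1/k}\le 1$ (since $f$ is analytic on $\DD$), and since $\|X\|<1$ the series converges in operator norm. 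Thus $F$ is a free holomorphic function on $[B(\cH)^1]_1$ with scalar coefficients (i.e.\ $\cE=\CC$), in the sense of the definition given in the introduction.

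The only nontrivial verification is that $F$ is contractive as a free holomorphic function, i.e.\ $\|F\|_\infty\le 1$. For this I would apply the noncommutative von Neumann inequality of \cite{Po-von}: for any $X\in [B(\cH)^1]_1$,
\begin{equation*}
\|F(X)\|=\|f(X)\|\le \|f\|_{H^\infty(\DD)}\le 1,
\end{equation*}
using that in the single-variable case $S_1$ is just the unilateral shift and von Neumann's classical inequality for a strict contraction gives $\|f(X)\|\le\|f\|_\infty$. Taking the supremum over $X\in [B(\cH)^1]_1$ yields $\|F\|_\infty\le 1$.

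Finally, with $F=(F_1)$ a contractive free holomorphic function from $[B(\cH)^1]_1$ into $B(\cH)$ (so $m=1$), Theorem \ref{S-P} applied to $X=A$ and $Y=B$ gives $F(A)\overset{H}{\sim}\,F(B)$, i.e.\ $f(A)\overset{H}{\sim}\,f(B)$, and
\begin{equation*}
\delta(f(A),f(B))=\delta(F(A),F(B))\le \delta(A,B),
\end{equation*}
which is the desired conclusion. There is no real obstacle here beyond the identification of a scalar $H^\infty$ function with a free holomorphic function in one variable, an identification which is precisely the content of von Neumann's inequality in the $n=1$ case.
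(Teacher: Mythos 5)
Your proof is correct and is essentially the same argument the paper has in mind: this corollary is the $n=m=1$, $\cE=\CC$ specialization of Theorem \ref{S-P}, and the only content is the identification of a contractive $f\in H^\infty(\DD)$ with a contractive free holomorphic function on $[B(\cH)^1]_1$ via von Neumann's inequality, which you carry out correctly. (One small stylistic point: the radius-of-convergence condition $\limsup_k|a_k|^{1/k}\le 1$ is exactly the $n=1$ form of the paper's coefficient growth condition, and the passage from the polynomial von Neumann inequality to $\|f(X)\|\le\|f\|_\infty$ for $f\in H^\infty$ and $\|X\|<1$ is standard but worth a word --- e.g.\ replace $f$ by $f(\|X\|\,\cdot)$, which is analytic on a neighborhood of $\overline{\DD}$.)
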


A few remarks are necessary.  Our hyperbolic metric $\delta$ is
different from the Kobayashi distance $\delta_K$ on unit ball
$[B(\cH)^n]_1$. Indeed, when $n=1$, one can show that $\delta$
coincides with the Harnack distance introduced by Suciu. In this
case, according to \cite{Su3} (and due to a result from \cite{Up}),
we have
$$
\delta(0,A)<\delta_K(0,A)=\frac{1}{2}\ln \frac{1+\|A\|}{1-\|A\|}
$$
for certain strict contractions $A\in B(\cH)$  with $\dim \cH\geq
2$. This also shows that $\delta$ is different from the metric  for
the ball  $[B(\cH)^n]_1$, as defined  in \cite{Ha1}.

We  define now a  Kobayashi type pseudo-distance on domains
$M\subset B(\cH)^m$, $m\in \NN$,  with respect to the hyperbolic
metric $\delta$ of the ball $[B(\cH)^n]_1$, as follows. Given two
points $X,Y\in M$, we consider a {\it chain of free holomorphic
balls} from $X$ to $Y$. That is, a chain of elements
$X=X_0,X_1,\ldots, X_k=Y$ in $M$, pairs of elements $A_1,
B_1,\ldots, A_k,B_k$ in $[B(\cH)^n]_1$, and free holomorphic
functions $F_1,\ldots, F_k$ on $[B(\cH)^n]_1$ with values  in $M$
such that
$$
F_j(A_j)=X_{j-1}\ \text{ and } \ F_j(B_j)=X_j\ \text{ for } \
j=1,\ldots, k.
$$
Denote this  chain by $\gamma$ and define its length by
$$
\ell(\gamma):=\delta(A_1,B_1)+\cdots + \delta (A_k, B_k),
$$
where $\delta$ is the hyperbolic metric  on $[B(\cH)^n]_1$. We
define
$$
\delta_{\bf ball}^M(X,Y):=\inf \ell(\gamma),
$$
where the infimum is taken over all chains $\gamma$ of free
holomorphic balls from $X$ to $Y$. If there is no such chain, we set
$\delta_{\bf ball}^M(X,Y)=\infty$. In general,  $\delta_{\bf
ball}^M$ is not  a true  distance  on $M$.  However, it becomes a
true distance  in some special cases.

It is well-known that the Kobayashi distance on the open unit disc
$\DD$ coincides with the Poincar\' e  metric.  A similar result
holds in our noncommutative setting.

\begin{proposition}\label{Koba} If $M=[B(\cH)^n]_1$, then
$\delta_{\bf ball}^M$ is a true distance and \ $ \delta_{\bf
ball}^M=\delta. $
\end{proposition}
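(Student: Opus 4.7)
The plan is to establish the two inequalities $\delta_{\mathbf{ball}}^{M}(X,Y)\leq \delta(X,Y)$ and $\delta_{\mathbf{ball}}^{M}(X,Y)\geq \delta(X,Y)$ separately for $M=[B(\cH)^n]_1$; once equality is proved, the fact that $\delta_{\mathbf{ball}}^M$ is a genuine distance is automatic from Proposition \ref{delta} (applied to the Harnack part of $0$, which by Theorem \ref{foias2} is $[B(\cH)^n]_1$), since $\delta$ is already known to be a metric there.

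For the upper bound, given $X,Y\in [B(\cH)^n]_1$ I would use the length-one chain $\gamma$ defined by $X_0=X$, $X_1=Y$, $A_1=X$, $B_1=Y$, and $F_1=\text{\rm id}$ on $[B(\cH)^n]_1$. The identity map is a free holomorphic self-map of the ball and trivially satisfies $F_1(A_1)=X_0$, $F_1(B_1)=X_1$, so this $\gamma$ is admissible with $\ell(\gamma)=\delta(A_1,B_1)=\delta(X,Y)$. Hence $\delta_{\mathbf{ball}}^{M}(X,Y)\leq \delta(X,Y)$.

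For the lower bound, consider an arbitrary admissible chain $\gamma$ from $X$ to $Y$: points $X=X_0,X_1,\ldots,X_k=Y$ in $M$, pairs $A_j,B_j\in [B(\cH)^n]_1$, and free holomorphic functions $F_j:[B(\cH)^n]_1\to [B(\cH)^n]_1$ with $F_j(A_j)=X_{j-1}$ and $F_j(B_j)=X_j$. Because $F_j$ takes values in the open unit ball, $\|F_j(Z)\|\leq 1$ for every $Z\in [B(\cH)^n]_1$ and every Hilbert space $\cH$, so $\|F_j\|_\infty\leq 1$; that is, each $F_j$ is a contractive free holomorphic function. Applying the Schwarz--Pick lemma (Theorem \ref{S-P}) to each $F_j$ gives
$$
\delta(X_{j-1},X_j)=\delta(F_j(A_j),F_j(B_j))\leq \delta(A_j,B_j),\qquad j=1,\dots,k.
$$
Summing and using the triangle inequality for $\delta$ on the Harnack part $[B(\cH)^n]_1$, we obtain
$$
\delta(X,Y)\leq \sum_{j=1}^{k}\delta(X_{j-1},X_j)\leq \sum_{j=1}^{k}\delta(A_j,B_j)=\ell(\gamma).
$$
Taking the infimum over all admissible chains $\gamma$ from $X$ to $Y$ yields $\delta(X,Y)\leq \delta_{\mathbf{ball}}^{M}(X,Y)$.

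Combining the two inequalities gives $\delta_{\mathbf{ball}}^{M}=\delta$ on $M=[B(\cH)^n]_1$, so $\delta_{\mathbf{ball}}^{M}$ inherits the metric properties of $\delta$ and is a true distance. There is essentially no technical obstacle: the only subtlety worth flagging is verifying that each $F_j$ in a chain automatically qualifies as a contractive free holomorphic function (so that Theorem \ref{S-P} applies), and this follows immediately from the fact that $F_j$ takes values in the open unit ball. The identity map provides the single admissible chain needed for the reverse direction.
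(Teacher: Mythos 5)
Your proof is correct and follows essentially the same route as the paper: for the lower bound you apply the Schwarz--Pick lemma (Theorem \ref{S-P}) to each link of an arbitrary chain and sum via the triangle inequality for $\delta$, and for the upper bound you use the length-one identity chain. Your explicit remark that each $F_j$ in a chain automatically satisfies $\|F_j\|_\infty\leq 1$ (so that Theorem \ref{S-P} applies) is a small but useful verification that the paper leaves implicit.
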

\begin{proof}
If $\gamma$ is a chain, as defined above, we use Theorem \ref{S-P}
and the fact that $\delta$ is a metric  to  deduce that
\begin{equation*}
\begin{split}
\delta(X,Y)&\leq  \delta(X_0,X_1)+\delta(X_1,X_2)+\cdots +
\delta(X_{k-1},X_k)\\
&=
 \delta(F_1(A_1),F_1(B_1))+\delta(F_2(A_2),F_2(B_2)+\cdots +
\delta(F_k(A_k),F_k(B_k))\\
&\leq  \delta(A_1,B_1)+\delta(A_2,B_2)+\cdots +
\delta(A_{k},B_k)=\ell(\gamma)
\end{split}
\end{equation*}
Taking the infimum over   all chains $\gamma$ of free holomorphic
balls from $X$ to $Y$, we deduce that $\delta(X,Y)\leq \delta_{\bf
ball}^M(X,Y)$. Taking $F$  the identity on $[B(\cH)^n]_1$, we obtain
$\delta_{\bf ball}^M(X,Y)\leq \delta(X,Y)$.
\end{proof}

It would be interesting  to find, as in the classical case,  classes
of  noncommutative domains $M$ in $B(\cH)^m$ so  that $\delta_{\bf
ball}^M$ is a true distance.

\bigskip

       %

 \end{document}